\newtheorem{theorem}{Theorem}[section]
\newtheorem{lemma}[theorem]{Lemma}
\newtheorem{proposition}[theorem]{Proposition}
\newtheorem{corollary}[theorem]{Corollary}
\newtheorem{definition}[theorem]{Definition}
\numberwithin{equation}{section}
\begin{document}
\newcommand{\cc}{\mathfrak{c}}
\newcommand{\N}{\mathbb{N}}
\newcommand{\Q}{\mathbb{Q}}
\newcommand{\C}{\mathbb{C}}
\newcommand{\Z}{\mathbb{Z}}
\newcommand{\R}{\mathbb{R}}
\newcommand{\T}{\mathbb{T}}
\newcommand{\I}{\mathcal{I}}
\newcommand{\J}{\mathcal{J}}
\newcommand{\A}{\mathcal{A}}
\newcommand{\HH}{\mathcal{H}}
\newcommand{\K}{\mathcal{K}}
\newcommand{\B}{\mathcal{B}}
\newcommand{\st}{*}
\newcommand{\PP}{\mathbb{P}}
\newcommand{\SSS}{\mathbb{S}}
\newcommand{\forces}{\Vdash}
\newcommand{\dom}{\text{dom}}
\newcommand{\osc}{\text{osc}}
\newcommand\encircle[1]{%
  \tikz[baseline=(X.base)] 
    \node (X) [draw, shape=circle, inner sep=0] {\strut #1};}

\title[An  extension]{An  extension of compact operators
by compact operators with no nontrivial multipliers}

\author{Saeed Ghasemi}
\address{Institute of Mathematics, Polish Academy of Sciences,
ul. \'Sniadeckich 8,  00-656 Warszawa, Poland $\&$ Institute of Mathematics of the Czech Academy of Sciences, \v Zitn\'a
25, 115 67 Praha 1, Czech Republic }
\email{\texttt{ghasemi@math.cas.cz}}

\author{Piotr Koszmider}
\address{Institute of Mathematics, Polish Academy of Sciences,
ul. \'Sniadeckich 8,  00-656 Warszawa, Poland}
\email{\texttt{piotr.koszmider@impan.pl}}
\thanks{The research of the second named author was partially supported by   grant
PVE Ci\^encia sem Fronteiras - CNPq (406239/2013-4).}

\begin{abstract} We construct a nonhomogeneous, separably represented, type I and approximately finite dimensional
 $C^*$-algebra such
that its multiplier algebra is equal to its unitization. This algebra is an essential  extension
of the algebra $\mathcal K(\ell_2({\mathfrak{c}}))$ of compact operators on a nonseparable Hilbert space by
the algebra  $\mathcal K(\ell_2)$ of compact operators on a separable Hilbert space, where ${\mathfrak{c}}$
 denotes the cardinality
of continuum.  Although
both  $\mathcal K(\ell_2({\mathfrak{c}}))$ and $\mathcal K(\ell_2)$ are stable, our algebra is not.
This sheds light on the
permanence properties of the stability in the nonseparable setting. Namely, unlike in the separable case, 
an extension of a stable nonseparable 
$C^*$-algebra   by $\mathcal K(\ell_2)$ does not have to be stable.
 Our construction can be 
considered as a noncommutative version of Mr\'owka's $\Psi$-space; a space whose one point compactification is
equal to its \v Cech-Stone compactification and is induced by a special uncountable family of
almost disjoint subsets of ${\mathbb{N}}$.  

\end{abstract}

\maketitle

\section{introduction}

Perhaps the simplest example of a locally compact space whose one-point compactification
is equal to the \v Cech-Stone compactification is the first uncountable ordinal $\omega_1$ with the order topology.
This follows from the well-known fact that every real or complex valued continuous function on $\omega_1$
is eventually constant. Another example of such spaces  is  $K\setminus\{x\}$, where  $K$ is a compact extremally disconnected space
and $x$ is a nonisolated point  (Exercise 1H of \cite{gj}).
A noncommutative version of this fact was proved in \cite{sakai} in the context of II$_1$ factors.
In \cite{mrowka} 
Mr\'owka  constructed  a locally compact space  with the same property that  the one-point compactification
and \v Cech-Stone compactification coincide which moreover has the simplest nontrivial Cantor-Bendixson
decomposition, i.e., after removing  a countable dense subset of isolated points we are
left with an uncountable discrete space. In other words, it is  a  separable scattered space 
 of Cantor-Bendixson height $2$ (see
6.4. of \cite{hrusak}).  Such spaces are induced
by uncountable almost disjoint families of infinite subsets of $\N$ (every two distinct members of the family
have finite intersection).
On the level of Banach spaces
of continuous functions or commutative $C^*$-algebras Mr\'owka's
space $X$ satisfies the following short exact sequence 
$$0\rightarrow c_0\xrightarrow{\iota}  C_0(X) \rightarrow
c_0(\cc)\rightarrow 0,$$
where $\iota[c_0]$ is an essential ideal $C_0(X)$, i.e., $\N$ is a dense open subset of $X$.
Here $\cc$ denotes the cardinality of the continuum. In other words, $C_0(X)$ is an essential extension of 
$c_{0}(\cc)$ by $c_0$ (see II.8.4 of \cite{Blackadar}).

In this paper we  produce a noncommutative version of this phenomenon.
It is widely accepted that the noncommutative version of the ideal of finite subsets of $\N$,
or the commutative $C^*$-algebra $c_0$, is the $C^*$-algebra of all compact operators
on a separable Hilbert space. 
The same analogy exists for finite subsets of $\cc$ and the $C^*$-algebra of
all compact operators on the Hilbert space $\ell_2(\cc)$ of density $\cc$.
The roles of the one point compactification and the \v Cech-Stone compactification
of a locally compact, noncompact space $X$
are played by the unitization of a nonunital $C^*$-algebra $\mathcal A$ (which will be denoted by $\tilde{A}$)
 and the
multiplier algebra $\mathcal M(\A)$ of $\A$, respectively.
Thus we are interested in
 an  essential extension of the algebra of compact
operators $\mathcal K(\ell_2(\cc))$
by $\mathcal K(\ell_2)$,  i.e., a $C^*$-algebra $\mathcal A$
satisfying the short exact sequence
$$0\rightarrow \mathcal K(\ell_2)\xrightarrow{\iota} \mathcal A 
\rightarrow\mathcal K(\ell_2(\cc))\rightarrow 0,\leqno(*)$$
where $\iota[\mathcal K(\ell_2)]$ is an essential ideal of
$\mathcal A$.  In 
the main theorem of this paper, Theorem \ref{main}, we construct such an  algebra  $\A$ with
 the required additional property that the multiplier algebra 
$\mathcal M(\mathcal A)$ of $\mathcal A$
is *-isomorphic to the unitization of $\mathcal A$. 
In other words, the corona algebra $\mathcal M(\A)/\A$ of our $\A$ is  *-isomorphic to $\C$. In fact $\A$ has the property that the space  $\mathcal Q \mathcal M(\A)$ of all quasi-multipliers of $\A$ coincides with $\mathcal M(\A)$ and therefore $\mathcal Q \mathcal M(\A)/\A$ is also *-isomorphic to $\C$.
The algebra $\A$ of Theorem \ref{main} is a nonseparable subalgebra of $\B(\ell_2)$, which is type I and approximately finite dimensional
 in the sense that any finite 
subset can be approximated from a finite dimensional subalgebra. 
Moreover $\A$ is a scattered $C^*$-algebra (see \cite{scattered}), which means all of 
its subalgebras are also approximately finite dimensional (\cite{kusuda-af}). Note that the various 
equivalent definitions of approximately finite dimensional
 $C^*$-algebras which are equivalent
in the separable case are no longer equivalent in the nonseparable context (see \cite{ilijas-af}
where a different terminology is used).

For $C^*$-algebras $\B$ and $\mathcal C$, an extension of $\B$ by $\mathcal C$ is a short exact sequence of $C^*$-algebras
$$0\rightarrow \mathcal C\rightarrow \mathcal A 
\rightarrow\mathcal B\rightarrow 0.$$ 
The goal of the extension theory is, given $\B$ and $\mathcal C$, to classify all the extensions of $\B$ by $\mathcal C$ up to 
a suitable equivalence relation. The set of all equivalence classes of extensions of $\B$ by $\mathcal C$ can be equipped with a proper addition which turns it into an abelian semigroup, usually denoted by $Ext(\B,\mathcal C)$, or simply $Ext(\B)$ if $\mathcal C = \K(\ell_2)$. The reader may refer to \cite{blackadar-k} for the details and various definitions regarding extensions of $C^*$-algebras, however in this paper we are not concerned about the structure of $Ext$ semigroups,
although we hope that the extension we construct is a contribution to a more general and future project of  understanding 
the semigroup $Ext(\mathcal K(\ell_2({{\kappa}}))$ for $\omega_1\leq \kappa\leq \cc$.
The ``extension questions" for $C^*$-algebras ask  whether the $C^*$-algebra $\A$ in the
 extension $$0\rightarrow \mathcal C\rightarrow \mathcal A 
\rightarrow\mathcal B\rightarrow 0,$$
satisfies property $\mathcal P$, given that both $\B$ and $\mathcal C$ satisfy $\mathcal P$.
One of the features of 
our extension  is that  $\B$ and $\mathcal C$ are as simple as possible (besides $\B$ being nonseparable), while
$\A$ is quite pathological, which makes it interesting for the questions of this sort. 

In particular if $\mathcal P$ is the stability property of a
$C^*$-algebra (recall that a $C^*$-algebra  is stable if
it is isomorphic to its tensor product by $\K(\ell_2)$) then the above question is usually called 
``the extension question for stable $C^*$-algebras" (see \cite{rordam-stable}). If
 $\mathcal C=\K(\ell_2)$ and $\B$ is a separable $C^*$-algebra, then 
  $\A$ is stable if and only if $\B$ is stable (see Proposition 6.12 of \cite{rordam-stable}; this is  essentially
 a result 
of BDF-theory (\cite{BDF})). In fact a result of Blackadar  (\cite{blackadar-af}) shows that
this holds also if  $\B$ and $\mathcal C$ in the above short exact sequence
 are any separable AF-algebras. This result is extended to extensions of  more general separable $C^*$-algebras in 
 \cite{rordam-stable}.
Therefore our example shows that these results do not hold  even in quite basic nonseparable context,
as the $C^*$-algebra  $\A$ from Theorem \ref{main} satisfies $(*)$ while it  is nonstable. The latter is because  the multiplier algebra
 of any stable algebra with a projection 
contains a copy of $\B(\ell_2)$ (by 3.8. of \cite{multipliers}). However, $\A$ and therefore  $\tilde{A}$ (which is isomorphic to 
$\mathcal M(\A)$)
 are  scattered $C^*$-algebras as mentioned above,  and consequently all of their  subalgebras are
AF (\cite{kusuda-af}). Hence $\mathcal M(\mathcal A)$ does not contain a copy of $\B(\ell_2)$.
We need to add however, that a result of R\o rdam shows that
there are separable  extensions of $\K(\ell_2)$ which are not stable (\cite{rordam-nonstable}).

On a different note, it is worth noticing  that our $C^*$-algebra $\mathcal A$ is complemented  in the Banach
space $\mathcal M(\mathcal A)$ as it is co-one-dimensional closed subspace.
This fact does not hold for many nonseparable $C^*$-algebras (see 3.7 of \cite{taylor}).
It is also interesting to note that any separable subalgebra $\A_0$ of $\A$
is included in a separable subalgebra $\B\subseteq \A$ satisfying
$$0\rightarrow \mathcal K(\ell_2)\xrightarrow{\iota} \mathcal B 
\rightarrow\mathcal K(\ell_2)\rightarrow 0,$$
where $\iota[\mathcal K(\ell_2)]$ is essential. All such algebras $\B$ are
isomorphic to  $\widetilde{\mathcal K(\ell_2)}\otimes \mathcal K(\ell_2)$,
the noncommutative version of $C_0(\omega^2)$ (Proposition \ref{extensions-unique}), where $\omega^2$ is the ordinal
$\omega\times\omega$ with the order topology. Also note that
$\widetilde{\mathcal K(\ell_2)}\otimes 
\mathcal K(\ell_2(\cc))$ is a  stable $C^*$-algebra which satisfies the short exact sequence from $(*)$, 
and clearly is not isomorphic to our algebra which is not stable.
 These facts
have  well-known  analogues in the commutative context which is surveyed in \cite{hrusak}
devoted to applications of almost disjoint families in topology. One should add that there are many
noncommutative constructions
based on almost disjoint families (see the begining of Section 2.2 for the definition)  like in this paper
or in papers \cite{ilijas-oa}, \cite{vignati}, \cite{tristan}.

The structure of the paper is as follows. In Section 2 we recall and prove preliminary results
concerning liftings of ``systems of almost matrix units''  
$\mathcal T= \{T_{\eta, \xi}: \xi,\eta<\kappa\}\subseteq \mathcal B(\ell_2)$, which form systems
 of matrix units in the Calkin algebra.
The results are related to the liftings of families of almost orthogonal projections  (families of orthogonal projections  in the Calkin algebra),
which were analyzed in \cite{wofsey} and \cite{farah-wofsey}. Our $C^*$-algebra $\A$ from Theorem \ref{main} is generated by 
a specific ``maximal'' system of almost matrix units $\mathcal T$ and all operators in $\K(\ell_2)$.  
 A result of \cite{wofsey}
states that maximal almost disjoint families of subsets of $\N$ do not necessarily give rise to
 maximal families of almost orthogonal projections. This is enough to suggest that  Mr\'owka's original  almost disjoint family (\cite{mrowka}) can not be directly used for our purpose in the noncommutative setting.

In Section 3, for any system of almost matrix units $\mathcal T= \{T_{\eta, \xi}: \xi,\eta<\kappa\}\subseteq \mathcal B(\ell_2)$
 and an operator $R\in \B(\ell_2)$ which is a quasi-multiplier of $\A(\mathcal T)$, we assign a $\kappa \times \kappa$-matrix $\Lambda^\mathcal T (R)$.
The matrix $\Lambda^{\mathcal T}(R)$ carries a great load of information about $R$, 
and its analysis is crucial in the remaining
parts of the paper.  

In Section 4 we prove some results related to a system of almost matrix units labeled
by pairs of branches of the Cantor tree. In particular, it is essential later to use the Borel
structure of the standard topology on the Cantor tree in the form of the ``prefect
set property" of Borel subsets of the tree.

Section 5 is devoted to a simple method of modifying a system of almost matrix units
called  {\it pairing}. Finally in Section 6 we present the main construction which uses all the
previously developed theory.

The general scenario of the construction and the proof of
the properties of our algebra follows the main steps of \cite{mrowka}. However there are
two-fold complications. The usual problems related to passing from the commutative to
the noncommutative context, and the combinatorial difficulties related to the fact that
the objects corresponding to almost disjoint families, namely
the systems of almost matrix units,  are labeled by pairs and not single indices.
The  natural idea is to construct a system of almost matrix units $\mathcal S\subset \B(\ell_2)$ such that
the $C^*$-subalgebra $\A(\mathcal S)$  generated by $\K(\ell_2)$ and the elements of $\mathcal S$
 has no nontrivial (quasi-)multipliers, meaning that multipliers of the algebra $\A(\mathcal S)$ are the elements of 
$\mathcal A(\mathcal S)$ and the compact perturbations of the multiples of the identity. 
The   method  of eliminating (or ``killing" as it is usually called in set theory)
is the above-mentioned {pairing} from Section 6.

The notation and terminology should be standard and attempts to
follow texts like \cite{murphy}, \cite{invitation}, \cite{Blackadar}, \cite{farah-wofsey}.
For $T,S\in \mathcal B(\ell_2)$, we often write $T=^\K  S$ if $T-S\in 
\mathcal K(\ell_2)$. The map $\delta$ is always defined so that
$\delta_{\alpha, \beta}=1$ if $\alpha=\beta$ and $0$ otherwise.
 $[X]^{<\omega}$ denotes the family of all finite subsets of a set $X$ and
$[X]^2$ denotes the family of all two-elements sets of $X$.
For $C^*$-algebras $\A\subseteq \B(\ell_2)$ we identify the unitization
$\tilde{\A}$ with the subalgebra of $\B(\ell_2)$ generated by 
$\mathcal A$ and the identity operator $1_{\B(\ell_2)}$.

We would like to thank Hannes Thiel for bringing the paper \cite{taylor} to our attention
and to Ilijas Farah for pointing out some gaps in the previous versions of the paper and
 for valuable comments.

\section{preliminaries}

\subsection{Compact operators}
The following elementary lemma sums up the basic properties of the compact operators which will be used throughout this paper.

\begin{lemma}\label{compact-elementary}
Suppose that $\{e_n: n\in \N\}$ is an orthonormal basis for the Hilbert space $\ell_2$ 
and $S$ is
a bounded linear operator on $\ell_2$.
\begin{enumerate}
\item  If $\Sigma_{n\in \N} \|S(e_n)\|< \infty$,
then $S$ is compact,
\item If $S$
 is compact, $w_k\in span(e_n: n\in F_k)$ are  norm $1$ vectors, for pairwise disjoint finite
$F_k\subseteq \N$ and all $k\in \N$, then $(\|S(w_k)\|)_{k\in \N}\rightarrow 0$,
\item If $S$ is noncompact, then there is $\varepsilon>0$ such that
for every $k\in \N$ there is a finite subset $F_k\subseteq \N$ with $k<\min(F_k)$
and $w_k\in span(e_n: n\in F_k)$ of norm $1$ such that $\|S(w_k)\|>\varepsilon$.
\end{enumerate}
\end{lemma}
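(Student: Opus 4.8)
For part (1), the plan is to exhibit $S$ as a norm limit of finite-rank operators. Writing each $x\in\ell_2$ as $x=\sum_n\langle x,e_n\rangle e_n$ and using boundedness and continuity of $S$, one has $S(x)=\sum_n\langle x,e_n\rangle S(e_n)$. I would then introduce the finite-rank truncations $S_N(x)=\sum_{n\le N}\langle x,e_n\rangle S(e_n)$ and estimate $\|(S-S_N)(x)\|\le\sum_{n>N}|\langle x,e_n\rangle|\,\|S(e_n)\|\le\|x\|\sum_{n>N}\|S(e_n)\|$, using $|\langle x,e_n\rangle|\le\|x\|$. Since $\sum_n\|S(e_n)\|<\infty$, the tail $\sum_{n>N}\|S(e_n)\|$ tends to $0$, so $\|S-S_N\|\to 0$ and $S$ is compact as a norm limit of finite-rank operators. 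This part is routine.

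For part (2), I would first observe that since the $F_k$ are pairwise disjoint, the vectors $w_k$ lie in mutually orthogonal subspaces and each has norm $1$, so $\{w_k\}$ is an orthonormal system and hence converges weakly to $0$. The key fact is that a compact operator is completely continuous, i.e.\ it carries weakly convergent sequences to norm-convergent ones; applying this to $w_k\rightharpoonup 0$ yields $\|S(w_k)\|\to 0$. If one prefers to avoid quoting complete continuity, the same conclusion follows by contradiction: a subsequence with $\|S(w_{k_j})\|>\varepsilon$ would, by compactness of $S$, admit a norm-convergent further subsequence whose limit has norm $\ge\varepsilon$, contradicting that weak convergence $S(w_{k_j})\rightharpoonup 0$ forces that limit to be $0$.

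For part (3), the idea is to use tail projections. Let $P_k$ denote the orthogonal projection onto $\overline{\mathrm{span}}(e_n:n>k)$; these satisfy that $\|SP_k\|$ is non-increasing in $k$. If $\|SP_k\|\to 0$, then writing $S=SP_k+S(1-P_k)$ would express $S$ as a norm limit of the finite-rank operators $S(1-P_k)$, making $S$ compact; since $S$ is noncompact, there is $\varepsilon>0$ with $\|SP_k\|>2\varepsilon$ for every $k$. Hence for each $k$ I can choose a unit vector $v\in\overline{\mathrm{span}}(e_n:n>k)$ with $\|S(v)\|>2\varepsilon$. The final step, which is the only delicate point, is to replace $v$ (which may have infinite support) by a finitely supported unit vector: truncating $v$ to its coordinates on a sufficiently large finite set $F_k\subseteq\{n:n>k\}$ and renormalizing produces $w_k$ with $\|S(w_k)\|>\varepsilon$, by continuity of $S$ and the fact that the truncations converge to $v$ in norm. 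Since $F_k\subseteq(k,\infty)$, the requirement $k<\min(F_k)$ holds automatically, and the constant $\varepsilon$ is uniform in $k$.

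I expect no genuine obstacle in any of the three parts; the only care needed is the finite-support truncation in (3) together with maintaining a single uniform $\varepsilon$, which is precisely why one starts from the strict bound $\|SP_k\|>2\varepsilon$ rather than $\|SP_k\|>\varepsilon$.
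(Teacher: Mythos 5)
Your proof is correct, and it is essentially the argument the paper intends: the paper's one-line proof cites the characterization that $S$ is compact iff $\|S(1-R_n)\|\to 0$ for the tail projections, which is exactly what your part (3) establishes and what your finite-rank truncations in part (1) amount to (since $S_N=SR_N$). Parts (1) and (2) are filled in with routine standard arguments (finite-rank approximation, weak-to-norm continuity of compact operators) that the paper leaves implicit, and all the details, including the uniform $\varepsilon$ and the finite-support truncation in (3), check out.
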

\begin{proof}
The above clauses easily follow from the fact that an operator $S$ is compact if and only if $\lim_{n\rightarrow \infty}
\|S(1 - R_n)\| = 0$, where $R_n$ is the projection on the span of $\{e_i : i\leq n\}$.
\end{proof}

Note that there are noncompact linear operators $S:\ell_2\rightarrow \ell_2$
satisfying $S(e_n)\rightarrow 0$. For example, consider the operator $S$ defined by
$S((x_n)_{n\in \N})(k)=\Sigma_{i\in I_k}{x_i/\sqrt{k}}$, where $(I_k)_{k\in\N}$
form pairwise disjoint consecutive intervals in $\N$ of size $k$. Considering
$w_k={1\over\sqrt k}\chi_{I_k}$, where $\chi_{I_k}$ is the characteristic function on $I_k$,
 one  can easily verify that (2) fails.

\subsection{Families of almost orthogonal projections}

A family $\{A_\xi: \xi < \kappa\}$ of subsets of $\N$ is called an almost disjoint family if $A_\xi\cap A_\eta$ is finite 
for distinct $\xi,\eta< \kappa$. Suppose $\wp(\N)$ denotes the Boolean algebra of all subsets of $\mathbb N$ and $Fin$ is the ideal of all finite subsets of $\mathbb N$.  Almost disjoint families correspond to sets of pairwise incomparable elements (antichains) of $\wp(\N)/Fin$. 
An almost disjoint family is called maximal if it is maximal with respect to the inclusion.
For a fixed orthonormal basis for the Hilbert space $\ell_2$,  $\wp(\N)/Fin$ naturally embeds in the poset of 
projection of the Calkin algebra. In other words, any almost disjoint family $\{A_\xi: \xi < \kappa\}$   would naturally give rise to 
a family of diagonalized projections $\{P_\xi: \xi < \kappa\}$ on $\ell_2$ such that $P_\xi P_\eta$ is a compact 
(finite dimensional) projection, 
for any distinct $\xi,\eta <\kappa$. The following is a natural generalization of such families.

\begin{definition}[\cite{wofsey}] For a Hilbert space $\mathcal H$, a  family $\mathcal P$ of noncompact projections
of $\mathcal B(\mathcal H)$ is called almost orthogonal if the product of any two distinct elements of it
is compact. Such a family $\mathcal P$ is called maximal if for every 
noncompact projection $Q\in \mathcal B(\mathcal H)$
the operator $PQ$ is noncompact, for some $P\in \mathcal P$. 
\end{definition}

Having fixed an orthonormal basis $(e_n: n\in \N)$
for $\ell_2(\N)$ and given a family $\mathcal F\subseteq \wp(\N)$ one can 
consider the orthogonal projections $P_A$ for $A\in \mathcal F$ 
onto the closed span of $\{e_n: n\in A\}$.
As it was observed in \cite{wofsey},  almost orthogonal families of projections corresponding 
in the above sense
to maximal 
almost disjoint families do not have to be maximal. 

 Recall that a ``masa" of $\B(\ell_2)$ is a maximal abelian subalgebra
of $\B(\ell_2)$. A masa is called atomic if it is isomorphic to $\ell_\infty$, the algebra of  all operators that are diagonalized by a fixed basis for $\ell_2$.
The following is Lemma 5.34 of  \cite{farah-wofsey}.
\begin{lemma}\label{5.34} Let
$\pi: \mathcal B(\ell_2)\rightarrow \mathcal B(\ell_2)/\mathcal K(\ell_2)$ 
be the quotient map. Given any sequence  $\{P_n: n\in \N\}$ of
projections in $\B(\ell_2)$ such that $\pi(P_i)$ and  $\pi(P_j)$ commute for all $i, j\in \N$, there is
an atomic masa $\mathcal A$ in $\B(\ell_2)$ such that $\pi[\A]$ contains each $\pi(P_i)$
for $i\in \N$.
\end{lemma}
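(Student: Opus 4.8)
The plan is to reduce the simultaneous diagonalization of the whole sequence $\{P_n\}$ to the diagonalization of a \emph{single} self-adjoint operator, after which the classical Weyl--von Neumann theorem does all the analytic work. First I would consider the unital C*-subalgebra $B=C^*(\{\pi(P_n):n\in \N\}\cup\{1\})$ of the Calkin algebra. Since the $\pi(P_n)$ are projections which commute pairwise, $B$ is abelian, and being countably generated it is separable; hence $B\cong C(X)$ for some compact metrizable space $X$. The projections $\pi(P_n)$ correspond to characteristic functions $\chi_{U_n}$ of clopen subsets $U_n\subseteq X$, and these clopen sets separate the points of $X$: if $x\neq y$ agreed on every $\chi_{U_n}$, then every polynomial in the generators, and hence every element of $B=C(X)$, would agree at $x$ and $y$, contradicting that $C(X)$ separates points. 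Consequently $X$ is a zero-dimensional compact metric space, and the map $x\mapsto(\chi_{U_n}(x))_{n\in \N}$ embeds it homeomorphically into the Cantor set $2^{\N}\subseteq[0,1]\subseteq\R$.

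Writing $g\colon X\to\R$ for this embedding (a homeomorphism onto its compact image, since $g$ is a continuous injection from a compact space), I would next observe that $B$ is \emph{singly generated}: by Stone--Weierstrass the element $g$ together with the constants generates $C(X)$, so there is a self-adjoint element $\pi(a)\in B$, namely $g$ itself, with $C^*(\pi(a),1)=B$. Moreover each $\chi_{U_n}$ is a continuous function of $g$; indeed $\chi_{U_n}\circ g^{-1}$ is continuous on the compact set $g(X)$ and extends by Tietze to some $f_n\in C(\R)$, giving $\pi(P_n)=f_n(\pi(a))$ for all $n$.

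It then remains to diagonalize the single operator $\pi(a)$. I would lift it to a self-adjoint $A\in\B(\ell_2)$ by taking any lift $b$ and replacing it with $(b+b^*)/2$, which is again a lift because $\pi(a)$ is self-adjoint. By the Weyl--von Neumann theorem there is an orthonormal basis $(f_k)_{k\in \N}$ of $\ell_2$ and a self-adjoint $D$, diagonal with respect to $(f_k)$, such that $A-D$ is compact, whence $\pi(D)=\pi(A)=\pi(a)$. Let $\A$ be the atomic masa consisting of all operators diagonal in the basis $(f_k)$. Then $D\in\A$ yields $\pi(a)=\pi(D)\in\pi[\A]$, and since $\pi[\A]$ is a unital C*-subalgebra of the Calkin algebra (the image of $\A$ under the $*$-homomorphism $\pi$), the continuous functional calculus of $\pi(a)$ stays inside $\pi[\A]$; in particular $\pi(P_n)=f_n(\pi(a))\in\pi[\A]$ for every $n$, as required.

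The conceptual heart of the argument, and the step that replaces a delicate simultaneous block-diagonalization of infinitely many projections, is the reduction to one self-adjoint generator via the zero-dimensionality of $X$; after that, Weyl--von Neumann supplies the basis. The main obstacle I expect to guard against is the tempting but treacherous direct route through a quasi-central approximate unit: asymptotic commutation $\|[E_m,P_n]\|\to 0$ with an increasing sequence of finite-rank projections $E_m$ controls only the coupling across a single cut $E_m$ and does \emph{not}, by itself, force the off-block-diagonal part of each $P_n$ to be compact. That difficulty is precisely what the single-generator reduction sidesteps.
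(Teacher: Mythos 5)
Your proof is correct. Note that the paper does not actually prove this lemma: it is quoted verbatim as Lemma 5.34 of the Farah--Wofsey notes, so there is no in-paper argument to compare against. Your argument is the standard one for this statement (and essentially the one in the cited source): pass to the separable abelian $C^*$-subalgebra of the Calkin algebra generated by the commuting projections, use zero-dimensionality of its spectrum to realize it as singly generated by a self-adjoint element whose continuous functional calculus recovers each $\pi(P_n)$, lift that generator to a self-adjoint operator, and apply Weyl--von Neumann to place it, modulo compacts, in an atomic masa. All the individual steps check out, including the Stone--Weierstrass and Tietze details and the point that $\pi[\mathcal A]$ is a unital $C^*$-subalgebra closed under the functional calculus of $\pi(a)$; your closing remark about why a quasi-central approximate unit alone would not suffice is also well taken.
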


\begin{lemma}\label{countable-diagonalization}
 Suppose that $\{P_n: n\in \N\}$ is  an almost orthogonal family of projections
of $\mathcal B(\ell_2)$.  Then there are pairwise orthogonal 
projections $\{R_n: n\in \N\}$ in $\mathcal B(\ell_2)$ such that $P_n=^\K  R_n$,
 for every $n\in \N$.
\end{lemma}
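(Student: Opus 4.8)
The plan is to use Lemma \ref{5.34} to reduce the problem to the purely combinatorial statement about almost disjoint families of subsets of $\N$, and then to disjointify such a family by the standard trick. First I note that since $P_nP_m$ is compact for distinct $n,m$, the images satisfy $\pi(P_n)\pi(P_m)=\pi(P_nP_m)=0$ in the Calkin algebra; in particular the $\pi(P_n)$ are pairwise orthogonal projections and hence pairwise commute. Thus the hypothesis of Lemma \ref{5.34} is met, and we obtain an atomic masa $\A\subseteq\B(\ell_2)$ with $\pi(P_n)\in\pi[\A]$ for every $n$. Fix an orthonormal basis $(f_k)_{k\in\N}$ of $\ell_2$ diagonalizing $\A$, so that $\A\cong\ell_\infty$ and the projections of $\A$ are exactly the diagonal projections $P_A$ onto the closed linear span of $\{f_k:k\in A\}$, for $A\subseteq\N$.

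Next, for each $n$ I would fix a subset $A_n\subseteq\N$ such that the diagonal projection $Q_n:=P_{A_n}\in\A$ satisfies $\pi(Q_n)=\pi(P_n)$; this is possible because the projections of $\pi[\A]\cong\ell_\infty/c_0$ are precisely the images of the diagonal projections $P_A$, $A\subseteq\N$. Since $\pi(Q_n)=\pi(P_n)$ we have $P_n=^\K Q_n$, so it suffices to find pairwise orthogonal projections $R_n$ with $R_n=^\K Q_n$. Moreover the $\pi(Q_n)$ inherit the pairwise orthogonality of the $\pi(P_n)$, so $Q_nQ_m=P_{A_n\cap A_m}$ is compact for $n\neq m$; as a diagonal projection is compact exactly when it has finite rank, this forces $A_n\cap A_m$ to be finite, i.e. $\{A_n:n\in\N\}$ is an almost disjoint family.

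Finally I would disjointify: set $B_n=A_n\setminus\bigcup_{m<n}A_m$ and $R_n=P_{B_n}$. The sets $B_n$ are pairwise disjoint, so the $R_n$ are pairwise orthogonal projections. Since $B_n\subseteq A_n$ and $A_n\setminus B_n=A_n\cap\bigcup_{m<n}A_m$ is a finite union of finite sets, hence finite, the difference $Q_n-R_n=P_{A_n\setminus B_n}$ has finite rank and is therefore compact; thus $R_n=^\K Q_n=^\K P_n$, as required. The only nonroutine ingredient is the simultaneous diagonalization provided by Lemma \ref{5.34}: it is precisely what allows the passage to a single atomic masa, after which the combinatorics of almost disjoint families and the conversion of almost disjointness into genuine orthogonality are elementary.
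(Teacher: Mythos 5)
Your proof is correct and follows essentially the same route as the paper: apply Lemma \ref{5.34} to place the $\pi(P_n)$ inside the image of a single atomic masa, lift them to diagonal projections indexed by an almost disjoint family $\{A_n\}$, and then disjointify. The paper phrases the last two steps more abstractly (via $\mathcal A/(\mathcal K(\ell_2)\cap\mathcal A)\cong C(\N^*)$ and choosing disjoint representatives in $\wp(\N)/Fin$), whereas you make the lifting and the recursive disjointification $B_n=A_n\setminus\bigcup_{m<n}A_m$ explicit, but the substance is identical.
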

\begin{proof}
By Lemma \ref{5.34} 
there is an atomic masa $\mathcal A$ in $\mathcal B(\ell_2)$
such that $\pi[\mathcal A]\supseteq \{\pi(P_n): n\in \N\}$. Since $\mathcal A$ is isomorphic to 
 $\ell_\infty\cong C(\beta\N)$, 
the ideal of compact operators in $\mathcal A$ is isomorphic to
$c_0\cong  C_0(\beta\N, \N^*)=\{f\in C(\beta\N): f|\N^*=0\}$, where $\N^*=\beta\N\setminus \N$. 
 Therefore $\mathcal A/(\mathcal K(\ell_2)\cap \A)\cong C(\beta\N)/ C_0(\beta\N, \N^*)\cong C(\N^*)$. 
As $\pi(P_n)$ are orthogonal projections, 
they correspond to the characteristic functions of pairwise disjoint clopen
subsets of $\N^*$. Such sets are given by pairwise disjoint elements of
$\wp(\N)/Fin$. For any such family in $\wp(\N)/Fin$ we can choose pairwise disjoint representatives
in $\wp(\N)$, which define disjoint clopen subsets of $\beta\N$ and therefore
pairwise orthogonal projections $R_n$ in $\mathcal A$ such that $\pi(R_n)=\pi(P_n)$.
\end{proof}
In the following $R|X$ denotes the restriction of the operator $R$ to the closed subspace $X$ of $\ell_2$.

\begin{lemma}\label{bounded-below} Suppose that $R\in \mathcal B(\ell_2)$ is noncompact
and self-adjoint.
Then there is a closed infinite dimensional subspace $ X\subseteq \ell_2$ such that
$R|X$ is invertible in $\B(X)$ and $R$ commutes with the orthogonal projection
$P_X$ onto $X$.
\end{lemma}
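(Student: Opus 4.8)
The plan is to invoke the spectral theorem for the self-adjoint operator $R$ and to extract an infinite dimensional spectral subspace on which $R$ is bounded away from $0$. First I would record that, because $R$ is noncompact, its image $\pi(R)$ in the Calkin algebra $\B(\ell_2)/\K(\ell_2)$ is a nonzero self-adjoint element. Hence its spectrum $\sigma(\pi(R))$, which coincides with the essential spectrum $\sigma_{ess}(R)$ of $R$, contains a nonzero real number $\lambda$; indeed one may take $|\lambda|=\|\pi(R)\|>0$.

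Next I would fix $\varepsilon$ with $0<\varepsilon<|\lambda|$, let $E$ denote the spectral measure of $R$, and set $P_X=E\big((\lambda-\varepsilon,\lambda+\varepsilon)\big)$, with $X=P_X[\ell_2]$ its range. Since $P_X$ is a spectral projection of $R$, it commutes with $R$, which delivers one of the two required properties at once and also shows that $X$ is $R$-invariant, so $R|X\in\B(X)$. Moreover the interval $(\lambda-\varepsilon,\lambda+\varepsilon)$ is bounded away from $0$, so the spectrum of the self-adjoint operator $R|X$ is contained in $[\lambda-\varepsilon,\lambda+\varepsilon]$, an interval not containing $0$; therefore $R|X$ is invertible in $\B(X)$, in fact bounded below by $|\lambda|-\varepsilon$.

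The step where noncompactness is genuinely used, and which I expect to be the only real point to justify, is that $X$ is infinite dimensional. This is precisely the characterization of membership in the essential spectrum: for a self-adjoint operator, $\lambda\in\sigma_{ess}(R)$ if and only if the spectral projection $E(U)$ has infinite dimensional range for every open neighborhood $U$ of $\lambda$. Equivalently one produces a Weyl sequence of orthonormal vectors $(x_n)$ with $\|(R-\lambda)x_n\|\to 0$ and passes to a suitable closed span. If one prefers to avoid quoting this characterization, the same subspace can be obtained through the continuous functional calculus: choosing a continuous $f\colon\R\to[0,1]$ with $f\equiv 1$ on a smaller interval around $\lambda$ and $f\equiv 0$ off $(\lambda-\varepsilon,\lambda+\varepsilon)$, the element $f(R)$ is noncompact because $\pi(f(R))=f(\pi(R))\neq 0$ (as $f(\lambda)=1$ with $\lambda\in\sigma(\pi(R))$), and its essential range yields the desired infinite dimensional $X$. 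Either way, the only subtlety is ensuring the spectral subspace is infinite dimensional, and that is exactly where the hypothesis that $R$ is noncompact enters.
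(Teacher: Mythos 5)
Your proof is correct and follows essentially the same route as the paper: both apply the spectral theorem to produce a spectral projection commuting with $R$ on whose (necessarily infinite dimensional, by noncompactness) range $R$ is bounded below. The only cosmetic difference is that the paper works with the multiplication-operator model and the sets $\{|f|>1/n\}$, deducing infinite dimensionality of some $P_n$ from the norm convergence $RP_n\to R$ (a finite rank limit would make $R$ compact), whereas you localize at a nonzero point of the essential spectrum and invoke the standard Weyl-type characterization (or, equivalently, your functional-calculus argument) for the same purpose.
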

\begin{proof} By the spectral theorem there are a measure space $(\mathcal M, \mu)$, an 
isomorphism of Hilbert spaces 
$U: \ell_2\rightarrow L_2(\mathcal M, \mu)$, and a measurable function $f$ such that
$URU^*$  is equal to the operator $M_f$ on
 $ L_2(\mathcal M, \mu)$ acting by  multiplication by $f$.
 Since $R$ is noncompact, we have $f\neq 0$. 
Putting $A_n=\{x\in \mathcal M: |f(x)|>1/n\}$, we have 
$M_f= \lim_{n\rightarrow \infty} M_{f\chi_{A_n}}$,
where the convergence is in the operator norm as $f-f\chi_{A_n}$
is a function bounded by $1/n$. Let 
$P_n=U^* M_{\chi_n} U$. 
Each $P_n$ is a projection which commutes
with $R$ and  $R$  is bounded away from zero on the range of $P_n$ for
each $n\in \N$. Moreover
 $(RP_n)_{n\in \N}$  converges (in the norm) to $R$.
 As $R$ is noncompact, for some $n$ the projection $P_n$ is infinite dimensional. Hence we
obtain the lemma by letting $X = P_n [\ell_2]$. 
\end{proof}

\begin{lemma}\label{maximal-noncompact} Suppose that $\mathcal P$ is a maximal
almost orthogonal family of projections 
of $\mathcal B(\ell_2)$ and $S\in \mathcal B(\ell_2)$ 
is a self-adjoint and noncompact operator. Then there are $P_1, P_2\in \mathcal P$ such that
$P_2 SP_1$ is noncompact.
\end{lemma}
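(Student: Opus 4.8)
The plan is to reduce the statement to the maximality property of $\mathcal P$, which only speaks about projections and only yields \emph{one-sided} noncompactness of a product, and to bridge the gap to the two-sided product $P_2SP_1$ by means of Lemma \ref{bounded-below}. The guiding principle is that if $S$ is bounded below on a subspace $X$, then multiplication by $S$ cannot turn a noncompact operator whose relevant part lives in $X$ into a compact one: $S$ admits a bounded inverse on $X$, and compact operators form an ideal, so noncompactness can be pulled back through $S$.

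First I would apply Lemma \ref{bounded-below} to $S$ to obtain an infinite dimensional closed subspace $X$ with orthogonal projection $Q$ such that $Q$ commutes with $S$ and $S|X$ is invertible in $\mathcal B(X)$. Since $Q$ is a noncompact projection, maximality of $\mathcal P$ yields $P_1\in\mathcal P$ with $P_1Q$, and hence $QP_1=(P_1Q)^*$, noncompact. Now comes the transfer step: letting $B\in\mathcal B(\ell_2)$ act as $(S|X)^{-1}$ on $X$ and as $0$ on $X^\perp$, one checks (using that $S$ commutes with $Q$ and therefore preserves both $X$ and $X^\perp$) that $BS=Q$, whence $B(SQP_1)=QP_1$. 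As $QP_1$ is noncompact and $B$ is bounded, $SQP_1=QSP_1$ must be noncompact, so in particular $SP_1$ is noncompact.

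To produce the left factor from $\mathcal P$ I would run the same argument one level up, applied to the self-adjoint operator $SP_1S=(SP_1)(SP_1)^*$, which is noncompact precisely because $SP_1$ is. Lemma \ref{bounded-below} then gives a projection $Q'$ commuting with $SP_1S$ on whose range $SP_1S$ is invertible, and maximality gives $P_2\in\mathcal P$ with $P_2Q'$, hence $Q'P_2$, noncompact. The same bounded-inverse trick (now taking adjoints so that it acts on the left) shows that $P_2(SP_1S)Q'$ is noncompact; writing this as $(P_2SP_1)(SQ')$ and using that $SQ'$ is bounded forces $P_2SP_1$ to be noncompact, which is the desired conclusion.

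The main obstacle is exactly this mismatch between what maximality supplies and what is needed: maximality concerns a single projection multiplied on one side, whereas the conclusion is a two-sided product involving the non-projection $S$. The device that resolves it is Lemma \ref{bounded-below}, which lets me replace $S$ (and later $SP_1S$) by a projection onto a subspace where the operator is invertible, so that the bounded inverse can pull noncompactness back through $S$; the price is that the argument must be carried out twice, once on each side.
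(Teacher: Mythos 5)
Your proof is correct and follows essentially the same route as the paper: apply Lemma \ref{bounded-below} together with the maximality of $\mathcal P$ once on each side, first to get $P_1$ with $SP_1$ noncompact and then, working with the self-adjoint operator $SP_1S=(SP_1)(SP_1)^*$, to get $P_2$ with $P_2SP_1$ noncompact. The only (harmless) difference is in the transfer step: where you construct the explicit bounded partial inverse $B$ with $BS=Q$ and invoke the ideal property of $\mathcal K(\ell_2)$, the paper instead applies Lemma \ref{bounded-below} a second time to $P_XP_1P_X$ and argues via compositions of Banach-space isomorphisms --- your version is, if anything, slightly cleaner, and you also spell out the second half that the paper compresses into ``similarly''.
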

\begin{proof} Let $X\subseteq \ell_2$ be an infinite dimensional
subspace such that $S|X$ is invertible and  $P_X$ commutes with $S$, which exists by Lemma
\ref{bounded-below}. 
By the maximality of $\mathcal P$ we find $P_1\in \mathcal P$
such that $P_XP_1$ is noncompact. Therefore $P_XP_1P_X$ is a self-adjoint and noncompact operator.
 Using  Lemma
\ref{bounded-below} again for $P_XP_1P_X$,  there is an infinite dimensional subspace $Y\subseteq X$
such that $(P_XP_1P_X)|Y$ is invertible. So $P_XP_1P_X$ acts on $Y$
as an isomorphism of Banach spaces, transforming $Y$ into its
image $(P_XP_1 P_X)[Y]$ which is an infinite dimensional subspace of $X$.
Since $S$ acts as an isomorphism of Banach spaces on $X$, it follows that
$SP_XP_1P_X$ is noncompact. Also since $S$ commutes with $P_X$, the operator 
 $P_XSP_1P_X$ is noncompact, and therefore $SP_1$ is noncompact. 
Working with $SP_1$ instead of $S$, similarly  we find $P_2\in \mathcal P$ such that
$P_2SP_1$ is noncompact.
\end{proof}

\subsection{Systems of almost matrix units}

Let $\kappa$ be a cardinal and  $\A$ be a $C^*$-algebra.
A family 
 $\{a_{\beta,\alpha}: \alpha,\beta<\kappa\}$ of nonzero elements of $\mathcal A$
satisfying the following matrix units relations:
\begin{itemize}
\item $a_{\beta,\alpha}^*=a_{\alpha,\beta}$ for all $\alpha,\beta<\kappa$,
\item
$a_{\beta,\alpha} a_{\gamma,\eta}=\delta_{\alpha,\gamma}a_{\beta,\eta}$
for all $\alpha, \beta, \gamma, \eta<\kappa$,
\end{itemize}
is called a system of matrix units in $\mathcal A$.

\begin{proposition}\label{matrixunitsiso} Let  $\A$ be 
the $C^*$-algebra
 generated by a system of its matrix units $\{a_{\eta, \xi}: \xi, \eta<\kappa\}$.
Then $\A$ is $^*$-isomorphic to the algebra $\K(\ell_2(\kappa))$
of all compact operators on $\ell_2(\kappa)$.
\end{proposition}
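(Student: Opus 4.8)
The plan is to show that the dense $*$-subalgebra of $\A$ spanned by the matrix units is algebraically $*$-isomorphic to the finite-rank operators on $\ell_2(\kappa)$, and then to upgrade this to an isometric isomorphism of the completions using the uniqueness of the $C^*$-norm on finite-dimensional $C^*$-algebras.

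First I would record the elementary consequences of the matrix unit relations. Each diagonal element $a_{\xi,\xi}$ is a projection, since $a_{\xi,\xi}^*=a_{\xi,\xi}$ and $a_{\xi,\xi}a_{\xi,\xi}=a_{\xi,\xi}$, and distinct diagonal elements are orthogonal because $a_{\xi,\xi}a_{\eta,\eta}=\delta_{\xi,\eta}a_{\xi,\eta}$. Moreover each $a_{\eta,\xi}$ is a partial isometry with $a_{\eta,\xi}^*a_{\eta,\xi}=a_{\xi,\xi}$ and $a_{\eta,\xi}a_{\eta,\xi}^*=a_{\eta,\eta}$. Since products of matrix units are again matrix units or zero, and the family is closed under adjoints, the linear span $\A_0=\mathrm{span}\{a_{\eta,\xi}:\xi,\eta<\kappa\}$ is a $*$-subalgebra of $\A$ whose closure is all of $\A$.

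Next I would establish linear independence: if $\sum_{\xi,\eta\in F}c_{\eta,\xi}a_{\eta,\xi}=0$ for a finite $F$, then multiplying on the left by $a_{\xi_0,\xi_0}$ and on the right by $a_{\eta_0,\eta_0}$ isolates the single term $c_{\xi_0,\eta_0}a_{\xi_0,\eta_0}$, and since $a_{\xi_0,\eta_0}\neq 0$ we conclude $c_{\xi_0,\eta_0}=0$. Consequently, for every finite $F\subseteq\kappa$ the subalgebra $B_F=\mathrm{span}\{a_{\eta,\xi}:\xi,\eta\in F\}$ is $*$-isomorphic to the full matrix algebra $M_{|F|}(\C)$, via the map sending $a_{\eta,\xi}$ to the standard matrix unit. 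Letting $\{e_{\eta,\xi}\}$ denote the canonical matrix units of $\K(\ell_2(\kappa))$ (the rank-one operators for a fixed orthonormal basis), the assignment $a_{\eta,\xi}\mapsto e_{\eta,\xi}$ respects the relations and, by linear independence, defines a $*$-isomorphism $\Phi$ from $\A_0$ onto the $*$-algebra of finite-rank operators on $\ell_2(\kappa)$.

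The main point, and the step I expect to require the most care, is to see that $\Phi$ is isometric. For this I would fix $x\in\A_0$, choose a finite $F$ with $x\in B_F$, and observe that $B_F$ is a finite-dimensional, hence closed, $C^*$-subalgebra of $\A$ isomorphic as a $*$-algebra to $M_{|F|}(\C)$; likewise $\Phi(B_F)=\mathrm{span}\{e_{\eta,\xi}:\xi,\eta\in F\}$ is the finite-dimensional $C^*$-algebra isomorphic to $M_{|F|}(\C)$ carrying the operator norm. Since a finite-dimensional $C^*$-algebra admits a unique $C^*$-norm, the $*$-isomorphism $\Phi|_{B_F}$ between these two copies of $M_{|F|}(\C)$ is automatically isometric, so $\|\Phi(x)\|=\|x\|$. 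As $x$ was arbitrary, $\Phi$ is an isometry on the dense subalgebra $\A_0$, and therefore extends to an isometric $*$-isomorphism of $\A=\overline{\A_0}$ onto the closure of the finite-rank operators, which is exactly $\K(\ell_2(\kappa))$. This yields the desired isomorphism.
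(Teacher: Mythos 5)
Your proof is correct and follows essentially the same route as the paper: both realize $\A$ as the closure (the paper phrases it as an inductive limit) of the full matrix algebras $B_F\cong M_{|F|}(\C)$ spanned by finitely many matrix units, and map these onto the corresponding finite-rank blocks of $\K(\ell_2(\kappa))$. You additionally spell out the linear independence of the matrix units and the isometry of the map via uniqueness of the $C^*$-norm on finite-dimensional $C^*$-algebras, details the paper leaves implicit.
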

\begin{proof}  Let $\{e_\xi :\xi<\kappa\}$ be an orthonormal basis for $\ell_2(\kappa)$ and the operators
$\{T_{\eta, \xi} : \xi,\eta< \kappa\}$  are the system of matrix units in $\B(\ell_2(\kappa))$ defined by 
$T_{\eta, \xi}(e_\xi)= e_\eta$ and $T_{\eta, \xi}(e_\xi')= 0$ for $\xi'\neq \xi$.  For every finite subset $F$
of $\kappa$, let $\B_F$ be the $C^*$-subalgebra generated by $\{T_{\eta, \xi} : \xi,\eta\in F\}$, which is clearly
isomorphic to $M_{|F|}$, the algebra of all $|F|\times |F|$ matrices. 
 Let $\B$
be the inductive limit of the algebras $\B_F$, along the set $\mathcal F_\kappa$ of finite subsets of $\kappa$ and the *-homomorphisms 
$\phi_{G,F}: \B_F \rightarrow \B_G$ for $F\subseteq G$ and $F,G\in \mathcal F_\kappa$, defined by 
$\phi_{G,F}(T_{\eta,\xi})= T_{\eta,\xi}$, for $\xi,\eta\in F$.  Clearly $\B= \K(\ell_2(\kappa))$. The map which 
sends $a_{\eta,\xi}$ to $T_{\eta,\xi}$ extends to a $*$-isomorphism from  $\A$ onto $\B$.
\end{proof}

\begin{definition}\label{almostunits} Suppose that
$\mathcal T=\{T_{\eta, \xi}: \xi, \eta<\kappa\}\subseteq \mathcal B(\ell_2)$
is  a family of noncompact operators.
We say that $\mathcal T$ is a system   of
almost matrix units  if and only if 
for every $\alpha, \beta, \xi, \eta<\kappa$,
\begin{enumerate}
\item $T_{\eta, \xi}^* =^\K   T_{\xi, \eta}$,
\item $T_{\beta, \alpha}\ T_{\eta, \xi} =^\K  \delta_{\alpha, \eta}T_{\beta, \xi}$.
\end{enumerate}
\end{definition}

\begin{definition}\label{maximal-system} 
Suppose that $\mathcal T=\{T_{\eta, \xi}: \xi, \eta<\kappa\}\subseteq \mathcal B(\ell_2)$
is a system of almost matrix units and $\{P_\xi: \xi<\kappa\}$ is a collection of
almost orthogonal projections in $\mathcal B(\ell_2)$.
We say that   $\mathcal T$ is based on $\mathcal P$ 
if $T_{\xi, \xi}=^\K P_\xi$ for all $\xi<\kappa$. We say that $\mathcal T$ is a 
maximal system of almost matrix units
  if it is based on a maximal family of almost orthogonal projections.
\end{definition}

\begin{lemma}\label{based-on-projections} Every system of almost matrix units is based on a family of almost
orthogonal projections.
\end{lemma}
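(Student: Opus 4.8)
The plan is to lift each diagonal element $T_{\xi,\xi}$ to a genuine projection $P_\xi$ which is a compact perturbation of it, and then verify that the resulting family is almost orthogonal. Let $\pi\colon \B(\ell_2)\to \B(\ell_2)/\K(\ell_2)$ be the quotient map onto the Calkin algebra. First I would observe that the matrix unit relations, applied to the diagonal entries, say precisely that each $\pi(T_{\xi,\xi})$ is a projection in the Calkin algebra and that these projections are pairwise orthogonal. Indeed, relation (1) with $\eta=\xi$ gives $T_{\xi,\xi}^*=^\K T_{\xi,\xi}$, so $\pi(T_{\xi,\xi})$ is self-adjoint; relation (2) with $\beta=\alpha=\eta=\xi$ gives $T_{\xi,\xi}T_{\xi,\xi}=^\K T_{\xi,\xi}$, so $\pi(T_{\xi,\xi})$ is idempotent; and relation (2) for $\xi\neq\eta$ gives $T_{\xi,\xi}T_{\eta,\eta}=^\K 0$, so the products of distinct diagonal entries are compact. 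Since the operators in $\mathcal T$ are noncompact, each $\pi(T_{\xi,\xi})$ is a nonzero projection.

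The next step is to replace $T_{\xi,\xi}$ by its self-adjoint part $S_\xi=\tfrac12(T_{\xi,\xi}+T_{\xi,\xi}^*)$. By relation (1) we have $S_\xi=^\K T_{\xi,\xi}$, the operator $S_\xi$ is genuinely self-adjoint, and $\pi(S_\xi)=\pi(T_{\xi,\xi})$ is still a projection in the Calkin algebra. Thus the essential spectrum of $S_\xi$, which equals $\mathrm{sp}(\pi(S_\xi))$, is contained in $\{0,1\}$, and the rest of $\mathrm{sp}(S_\xi)$ consists of isolated eigenvalues of finite multiplicity accumulating only at $0$ and $1$; in particular $\mathrm{sp}(S_\xi)\cap(1/4,3/4)$ is finite.

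The key step — and the only one requiring care — is to produce from $S_\xi$ a genuine projection congruent to it modulo the compacts. I would choose $\lambda\in(1/4,3/4)$ outside $\mathrm{sp}(S_\xi)$, which is possible by the previous remark, and let $P_\xi=g(S_\xi)$, where $g$ is the continuous function on $\mathrm{sp}(S_\xi)$ equal to $0$ below $\lambda$ and to $1$ above $\lambda$; since $\lambda\notin\mathrm{sp}(S_\xi)$ this $g$ is well defined and $P_\xi$ is a genuine projection. To see that $P_\xi=^\K S_\xi$, note that the continuous function $h=g-\mathrm{id}$ vanishes at $0$ and at $1$, hence $h(\pi(S_\xi))=0$ because $\mathrm{sp}(\pi(S_\xi))\subseteq\{0,1\}$, so $\pi(P_\xi)-\pi(S_\xi)=\pi(h(S_\xi))=h(\pi(S_\xi))=0$ and thus $P_\xi-S_\xi=h(S_\xi)\in\K(\ell_2)$. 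Consequently $P_\xi=^\K T_{\xi,\xi}$, and $\pi(P_\xi)=\pi(T_{\xi,\xi})\neq 0$ shows that $P_\xi$ is noncompact.

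Finally I would assemble the pieces: for distinct $\xi,\eta<\kappa$ the relation $P_\xi P_\eta=^\K T_{\xi,\xi}T_{\eta,\eta}=^\K 0$ shows that $P_\xi P_\eta$ is compact, so $\{P_\xi:\xi<\kappa\}$ is a family of almost orthogonal projections, and since $T_{\xi,\xi}=^\K P_\xi$ for each $\xi$, the system $\mathcal T$ is based on it. I expect the main obstacle to be the lifting in the third step, i.e. verifying that a continuous function vanishing on the essential spectrum sends $S_\xi$ to a compact operator; this is exactly the standard fact that projections lift along the Calkin quotient, and the functional-calculus argument above makes it self-contained.
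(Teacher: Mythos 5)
Your proof is correct and follows essentially the same route as the paper: observe that the diagonal relations make each $[T_{\xi,\xi}]_{\K(\ell_2)}$ a projection in the Calkin algebra and lift it to a genuine projection $P_\xi$ with $P_\xi=^\K T_{\xi,\xi}$, after which almost orthogonality is immediate from relation (2). The only difference is that the paper cites the lifting fact (Lemma 5.3 of Farah--Wofsey), whereas you prove it self-containedly via the functional calculus applied to the self-adjoint part of $T_{\xi,\xi}$; that argument is sound.
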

\begin{proof} By the almost matrix units relations \ref{almostunits} we have that
$T_{\xi, \xi}^*=^\K T_{\xi, \xi}$ and  
$T_{\xi, \xi}T_{\xi, \xi}=^\K T_{\xi, \xi}$, so
$[T_{\xi, \xi}]_{\K(\ell_2)}$ is a projection in the Calkin algebra. Therefore we can find
a projection $P_\xi\in \mathcal B(\ell_2)$ such that $P_\xi=^\K T_{\xi, \xi}$
(see Lemma 5.3. of \cite{farah-wofsey}).
\end{proof}

In the rest of this section  we use some elementary facts about partial isometries i.e.,
elements of $\B(\ell_2)$ which are isometries on a subspace of $\ell_2$ and zero
on its orthogonal complement.  For an element $U\in \B(\ell_2)$ being a partial
isometry is equivalent to each of the conditions (i) $U=UU^*U$, (ii) $U^*=U^*UU^*$, (iii)
$U^*U$ is a projection, (iv) $UU^*$ is a projection (2.3.3. \cite{murphy}). Moreover recall that by the 
polar decomposition,  any $T\in \B(\ell_2)$ can be written as $T=U|T|$, where
$U$ is a partial isometry whose kernel is equal to the kernel of $T$ (2.3.4. \cite{murphy}).

\begin{lemma}\label{system-existence}
Suppose that $\mathcal P=\{P_\xi: \xi<\kappa\}\subseteq \mathcal B(\ell_2)$ is a family
of almost orthogonal projections. Then there is a system of
almost matrix units $\mathcal T$ based on $\mathcal P$.
\end{lemma}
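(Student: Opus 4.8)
The plan is to construct the off-diagonal operators $T_{\eta,\xi}$ for $\eta\neq\xi$ as partial isometries built directly from the projections $P_\xi$, and then verify the almost matrix units relations \ref{almostunits} by careful bookkeeping modulo $\mathcal K(\ell_2)$. First I would fix a reference index, say $0<\kappa$, and for each $\xi<\kappa$ produce a partial isometry $V_\xi$ implementing a ``rotation'' from the range of $P_0$ onto the range of $P_\xi$. The natural way to do this is to choose orthonormal bases $(f^\xi_n)_{n\in\N}$ for each infinite dimensional subspace $P_\xi[\ell_2]$ and define $V_\xi$ to send $f^0_n\mapsto f^\xi_n$ and to vanish on the orthogonal complement of $P_0[\ell_2]$; then $V_\xi$ is a genuine partial isometry with $V_\xi^*V_\xi=P_0$ and $V_\xi V_\xi^*=P_\xi$. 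Setting $V_0=P_0$ and defining
$$T_{\eta,\xi}=V_\eta V_\xi^*,$$
one obtains candidate operators that are, at least, honest partial isometries since $V_\xi^*V_\eta$ behaves well on the diagonal.

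The next step is to check relations (1) and (2) up to compacts. Relation (1) is immediate and in fact exact: $T_{\eta,\xi}^*=(V_\eta V_\xi^*)^*=V_\xi V_\eta^*=T_{\xi,\eta}$, so no compact error appears there. For relation (2) I would compute
$$T_{\beta,\alpha}T_{\eta,\xi}=V_\beta V_\alpha^* V_\eta V_\xi^*,$$
and the whole argument reduces to understanding the middle factor $V_\alpha^* V_\eta$. When $\alpha=\eta$ this is exactly $V_\alpha^* V_\alpha=P_0$, giving $V_\beta P_0 V_\xi^*=V_\beta V_\xi^*=T_{\beta,\xi}$ on the nose. When $\alpha\neq\eta$, the factor $V_\alpha^* V_\eta$ carries the range of $P_\eta$ back through $P_\alpha$, and since $P_\alpha$ and $P_\eta$ are almost orthogonal, $P_\alpha P_\eta\in\mathcal K(\ell_2)$; tracing this through the definitions $V_\alpha^* V_\eta=V_\alpha^* P_\alpha P_\eta V_\eta$ (using $V_\alpha^*=V_\alpha^*P_\alpha$ and $P_\eta V_\eta=V_\eta$) shows $V_\alpha^* V_\eta$ is compact, hence $T_{\beta,\alpha}T_{\eta,\xi}$ is compact, matching $\delta_{\alpha,\eta}T_{\beta,\xi}=0$ modulo $\mathcal K(\ell_2)$.

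It remains to confirm the basing condition and noncompactness. The diagonal operators are $T_{\xi,\xi}=V_\xi V_\xi^*=P_\xi$ exactly, so $\mathcal T$ is based on $\mathcal P$ in the sense of Definition \ref{maximal-system} (indeed with equality, not merely $=^{\mathcal K}$). Each $T_{\eta,\xi}=V_\eta V_\xi^*$ is noncompact because it is a partial isometry whose initial space $P_\xi[\ell_2]$ and final space $P_\eta[\ell_2]$ are infinite dimensional, so it cannot be a compact operator. I would also record that $T_{\xi,\xi}=P_\xi\neq 0$ and more generally $T_{\eta,\xi}\neq 0$, so all elements are nonzero as required by Definition \ref{almostunits}.

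The main obstacle I anticipate is the verification that the errors in the $\alpha\neq\eta$ case of relation (2) are genuinely compact, i.e.\ making the factorization $V_\alpha^* V_\eta=V_\alpha^* (P_\alpha P_\eta) V_\eta$ rigorous and invoking almost orthogonality cleanly. The identities $V_\alpha^* P_\alpha=V_\alpha^*$ and $P_\eta V_\eta=V_\eta$ follow from $V_\alpha V_\alpha^*=P_\alpha$ and $V_\eta V_\eta^*=P_\eta$ together with the partial isometry relations recalled before this lemma, but one must be slightly careful that the chosen bases make $V_\xi$ a bona fide partial isometry rather than merely a bounded map. Once that is in place, compactness of $P_\alpha P_\eta$ propagates through multiplication by the bounded operators $V_\alpha^*$ and $V_\eta$, since $\mathcal K(\ell_2)$ is a two-sided ideal, and all the relations follow.
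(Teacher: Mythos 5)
Your proof is correct and follows essentially the same route as the paper: the paper likewise fixes the reference index $0$, takes partial isometries $T_{\xi,0}$ with $T_{\xi,0}^*T_{\xi,0}=P_0$ and $T_{\xi,0}T_{\xi,0}^*=P_\xi$, defines $T_{\xi,\eta}=T_{\xi,0}T_{0,\eta}$ (which is exactly your $V_\xi V_\eta^*$), and verifies relation (2) by the same factorization $T_{0,\alpha}T_{\eta,0}=T_{0,\alpha}P_\alpha P_\eta T_{\eta,0}$ through the compact product $P_\alpha P_\eta$. The only cosmetic difference is that you build the partial isometries explicitly from chosen orthonormal bases, while the paper simply invokes their existence.
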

\begin{proof} Since  $P_\xi$ for $\xi<\kappa$ are  infinite dimensional projections, there are partial isometries  
 $T_{\xi, 0}\in \mathcal B(\ell_2)$  such that 
$T_{\xi, 0}^* T_{\xi, 0} = P_0$ and $T_{\xi, 0} T_{\xi, 0}^* = P_\xi$, for each $\xi<\kappa$.
Let  $T_{0,\xi} = T_{\xi,0}^*$. 
We have $T_{\xi, 0}=P_\xi T_{\xi, 0}$ and $T_{0,\xi} =T_{0,\xi} P_\xi$.
For  $\xi, \eta<\kappa$,
 define 
$$T_{\xi, \eta}=T_{\xi, 0}T_{0,\eta}.$$ 
 It is clear that $\{T_{\xi, \eta}: \xi, \eta<\kappa\}$ satisfies the condition $(1)$ of Definition 
\ref{almostunits}. For $(2)$ note that if $\alpha,\eta < \kappa$ then 
$$ T_{0,\alpha}T_{\eta, 0}= T_{0,\alpha}P_\alpha P_\eta T_{\eta, 0}, $$
which is compact if $\xi\not=\eta$, by the almost orthogonality of $P_\xi$s and
 $T_{\beta, \alpha}T_{\alpha, \xi}=T_{\beta, \xi}$.
\end{proof}

\begin{lemma}\label{extension-to-maximal} Every system of almost matrix units can be extended
to a maximal one.
\end{lemma}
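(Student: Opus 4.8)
The plan is to split the task into two stages: first enlarge the underlying family of almost orthogonal projections to a \emph{maximal} one, and then build matrix units over the newly adjoined projections while retaining all the original operators. By Lemma \ref{based-on-projections} the given system $\mathcal T=\{T_{\eta,\xi}:\xi,\eta<\kappa\}$ is based on a family $\mathcal P=\{P_\xi:\xi<\kappa\}$ of almost orthogonal projections, so $T_{\xi,\xi}=^\K P_\xi$.

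First I would extend $\mathcal P$ to a maximal almost orthogonal family by a routine application of Zorn's lemma: the union of a chain of almost orthogonal families is again almost orthogonal, since any two distinct members already lie in one family of the chain and hence have compact product. A maximal element $\mathcal P'=\{P_\xi:\xi<\lambda\}$ (with $\lambda\geq\kappa$, the projections $P_\xi$ for $\kappa\leq\xi<\lambda$ being the adjoined ones) admits no noncompact projection $Q$ with $P_\xi Q$ compact for all $\xi<\lambda$, as otherwise $\mathcal P'\cup\{Q\}$ would properly enlarge it; this is exactly the maximality in Definition \ref{maximal-system}.

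For the second stage I would imitate the construction of Lemma \ref{system-existence}, using index $0$ as a reference column. For each new index $\kappa\leq\xi<\lambda$, the projections $P_0$ and $P_\xi$ are infinite dimensional, so I choose a partial isometry $T_{\xi,0}$ with $T_{\xi,0}^*T_{\xi,0}=P_0$ and $T_{\xi,0}T_{\xi,0}^*=P_\xi$, and set $T_{0,\xi}=T_{\xi,0}^*$; for old $\xi$ I keep the original $T_{\xi,0}$ and $T_{0,\xi}$. Having fixed the first row and first column, I then define $T_{\xi,\eta}:=T_{\xi,0}T_{0,\eta}$ for every pair with $\max(\xi,\eta)\geq\kappa$, retaining the original $T_{\xi,\eta}$ when $\xi,\eta<\kappa$. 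This is non-circular and plainly extends $\mathcal T$, and it is based on $\mathcal P'$ since $T_{\xi,\xi}=T_{\xi,0}T_{\xi,0}^*=P_\xi$ for the new indices. By Definition \ref{maximal-system} the family $\mathcal T'$ is then a maximal system of almost matrix units, \emph{provided} it satisfies the relations of Definition \ref{almostunits}.

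Verifying those relations is the substantive point, and I expect the relations mixing old and new indices to be the main obstacle. The adjoint relation (1) is immediate from the definitions. For the multiplicative relation (2) the computation reduces, exactly as in Lemma \ref{system-existence}, to the middle product in $T_{\beta,\alpha}T_{\eta,\xi}=T_{\beta,0}\,(T_{0,\alpha}T_{\eta,0})\,T_{0,\xi}$. The key step is to establish, uniformly for old and new indices, the support identities $T_{\xi,0}=^\K P_\xi T_{\xi,0}=^\K T_{\xi,0}P_0$ together with their adjoints: for new indices these follow directly from the partial isometry identities, and for old indices from relation (2) of $\mathcal T$ together with $T_{\xi,\xi}=^\K P_\xi$. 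Consequently $T_{0,\alpha}T_{\eta,0}=^\K T_{0,\alpha}P_\alpha P_\eta T_{\eta,0}$, which is compact when $\alpha\neq\eta$ by the almost orthogonality of $\mathcal P'$, and is $=^\K P_0$ when $\alpha=\eta$. Substituting back gives $T_{\beta,\alpha}T_{\eta,\xi}=^\K\delta_{\alpha,\eta}T_{\beta,\xi}$, and noncompactness of each $T_{\xi,\eta}$ follows since $T_{\xi,\eta}T_{\eta,\xi}=^\K P_\xi$. The only real care is the bookkeeping of the three cases (both indices old, both new, mixed), but in each the same two support identities do the work.
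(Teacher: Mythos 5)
Your proposal is correct and follows essentially the same route as the paper: extend the underlying family of almost orthogonal projections to a maximal one, build matrix units over the new indices via partial isometries through a reference index $0$ (this is exactly what the paper's Lemma \ref{system-existence} does), and define the mixed operators by $T_{\eta,\xi}=T_{\eta,0}T_{0,\xi}$. Your explicit isolation of the support identities $T_{\xi,0}=^\K P_\xi T_{\xi,0}=^\K T_{\xi,0}P_0$ is just a spelled-out version of the verification the paper leaves as ``straightforward to check.''
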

\begin{proof}Suppose that $\mathcal T=\{T_{\eta, \xi}: \xi, \eta<\kappa\}$ is a system 
of almost matrix units. Let $\mathcal P=\{P_\xi: \xi \in \kappa\}$  be a family of projections such that
$T_{\xi, \xi}=^\K P_\xi$ as in Lemma \ref{based-on-projections}.  Extend 
$\mathcal P$ to a maximal
family of almost orthogonal projections $\mathcal P'=\{P_\xi: \xi \in \kappa\}\cup\{P_\xi: \xi\in X\}$  for some set $X$ disjoint from $\kappa$. 
Use Lemma \ref{system-existence} to construct a system of almost
matrix units $\{T_{\eta, \xi}: \xi, \eta\in X\cup\{0\}\}$
based on $\{P_\xi: \xi\in X\cup\{0\}\}$. 
For $\xi\in \kappa$ and $\eta\in X$ define
$$T_{\eta, \xi}= T_{\eta, 0} T_{0,\xi}, \ \ T_{\xi, \eta}=T_{\xi, 0}T_{0, \eta}.$$

It is straightforward to check that $\{T_{\eta, \xi}: \xi, \eta\in \kappa\cup X\}$
forms a system of almost matrix units based on $\mathcal P'$.

\end{proof}

The  next lemma is a version of Lemma III 6.2 from \cite{davidson}.
\begin{lemma}\label{lifting-compact} 
Suppose that $\{T_{j, i}: i, j\in \N\}$ is a system of almost matrix units in $\B(\ell_2)$.
Then there is  a system $\{E_{j, i}: i, j\in \N\}$
 of matrix units in $\B(\ell_2)$ such that 
$E_{j, i}=^\K T_{j, i}$ for every $i, j\in \N$.
\end{lemma}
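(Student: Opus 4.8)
The plan is to reduce the whole statement to the construction of a single ``column'' of genuine partial isometries. Concretely, I would build genuine partial isometries $E_{i,0}\in\B(\ell_2)$, for $i\in\N$, having a common initial projection $E_{0,0}=^\K T_{0,0}$, pairwise orthogonal final projections $E_{i,0}E_{i,0}^*$, and satisfying $E_{i,0}=^\K T_{i,0}$; then I would put $E_{j,i}:=E_{j,0}E_{i,0}^*$. The matrix unit relations for $\{E_{j,i}\}$ follow from the partial isometry calculus exactly as in the proof of Lemma \ref{system-existence}, except that now the final projections are honestly, not merely almost, orthogonal, so the relevant products vanish on the nose and one obtains genuine (not almost) matrix units. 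Moreover $E_{j,i}=^\K T_{j,0}T_{0,i}=^\K T_{j,i}$ by the relations in Definition \ref{almostunits}, so the lift agrees with $\mathcal T$ modulo $\K(\ell_2)$.

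First I would fix, using Lemma \ref{based-on-projections}, a genuine projection $R_0=^\K T_{0,0}$ and set $E_{0,0}:=R_0$; this is the common initial space. I then construct the $E_{i,0}$ by recursion on $i\ge 1$. Writing $Q^{<i}$ for the finite sum of the already constructed final projections $E_{k,0}E_{k,0}^*$ ($k<i$), I cut down $T_{i,0}$ to $S_i:=(1-Q^{<i})\,T_{i,0}\,R_0$. Since $E_{k,0}E_{k,0}^*=^\K T_{k,k}$ by the inductive hypothesis, the relations give $Q^{<i}T_{i,0}=^\K(\sum_{k<i}T_{k,k})T_{i,0}=^\K 0$ and $T_{i,0}R_0=^\K T_{i,0}$, hence $S_i=^\K T_{i,0}$; a direct computation with Definition \ref{almostunits} yields $S_i^*S_i=^\K R_0$, while by construction $S_iR_0=S_i$ and $(1-Q^{<i})S_i=S_i$, so $S_i$ already has initial space inside $R_0\ell_2$ and final space orthogonal to $Q^{<i}$.

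It remains to turn $S_i$ into a genuine partial isometry with initial projection exactly $R_0$ without destroying $S_i=^\K T_{i,0}$. For this I use the polar decomposition $S_i=w_i|S_i|$. Because $S_i^*S_i=^\K R_0$, the positive operator $S_i^*S_i$ lives in the corner $R_0\B(\ell_2)R_0$ and its restriction to $R_0\ell_2$ is a compact perturbation of the identity, so it has essential spectrum $\{1\}$. Choosing $c\in(0,1)$ which is not an eigenvalue, the spectral projection $G_i:=\chi_{[c,\infty)}(S_i^*S_i)\le R_0$ differs from $R_0$ by a \emph{finite rank} projection, and $V_i:=w_iG_i$ is a genuine partial isometry with initial projection $G_i$ and final projection inside $\overline{\mathrm{ran}}(S_i)$, hence orthogonal to $Q^{<i}$. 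Since $|S_i|=^\K R_0$ (continuous functional calculus) one gets $|S_i|G_i=^\K G_i$ and therefore $V_i=^\K S_iG_i=^\K S_i=^\K T_{i,0}$. Finally I repair the finite rank defect: I choose an isometry $j_i$ of $(R_0-G_i)\ell_2$ onto a finite dimensional subspace orthogonal to both $Q^{<i}$ and the final space of $V_i$ (possible as the orthogonal complement is infinite dimensional), and set $E_{i,0}:=V_i+j_i$. This is a genuine partial isometry with initial projection exactly $R_0$ and final projection orthogonal to $Q^{<i}$, and as $j_i$ is finite rank we still have $E_{i,0}=^\K V_i=^\K T_{i,0}$, which completes the recursion.

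I expect the main obstacle to be precisely this exactness requirement: a priori one fears a Fredholm index obstruction to realizing the initial space as the \emph{prescribed} projection $R_0$ while simultaneously keeping agreement with $T_{i,0}$ modulo $\K(\ell_2)$, since a unitary between fixed subspaces necessarily has index zero. The point that dissolves this is that the defect $R_0-G_i$ is \emph{finite dimensional} --- this is exactly what $S_i^*S_i=^\K R_0$ buys, via the spectral cut at a regular value $c$ --- so the correction is by a finite rank operator and hence invisible modulo $\K(\ell_2)$, while the infinite dimensionality of the orthogonal complement supplies the fresh room into which the defect can be mapped and keeps the final projections honestly orthogonal.
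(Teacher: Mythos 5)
Your construction is correct, but it takes a genuinely different route from the paper's. The paper disposes of the lemma in two lines of K-theory: the algebra generated by $\{T_{j,i}\}$ and $\K(\ell_2)$ sits in an essential extension of $\K(\ell_2)$ by $\K(\ell_2)$ (Lemma \ref{short-exact-sequence}), and since $K_0(\K(\ell_2))\cong\Z$ is free, the extension splits (citing Exercise 16.4.7 of Blackadar's K-theory book); the splitting $*$-homomorphism applied to the canonical matrix units of the quotient is the lift. Your argument is instead the classical direct construction --- essentially the proof of Lemma III.6.2 in Davidson's book, which the paper itself names as the source of the statement: build one column of honest partial isometries with common initial projection $R_0$ and pairwise orthogonal final projections, and set $E_{j,i}=E_{j,0}E_{i,0}^*$. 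The steps check out: $S_i=^\K T_{i,0}$ and $S_i^*S_i=^\K R_0$ follow from the relations of Definition \ref{almostunits}; the essential spectrum of $S_i^*S_i|_{R_0\ell_2}$ is $\{1\}$, so the spectral cut at $c\in(0,1)$ gives $G_i\le R_0$ with $R_0-G_i$ finite rank, and $V_i=w_iG_i$ is a partial isometry with $V_i=^\K T_{i,0}$ and range inside $\overline{\mathrm{ran}}(S_i)\perp Q^{<i}$. The one point you should make explicit is why there is room for the finite-rank repair $j_i$: the projection $1-Q^{<i}-V_iV_i^*$ is $=^\K 1-\sum_{k\le i}T_{k,k}$, and if it were compact then $T_{i+1,i+1}=^\K T_{i+1,i+1}\sum_{k\le i}T_{k,k}=^\K 0$ would contradict noncompactness of $T_{i+1,i+1}$; hence the complement is infinite dimensional. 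What the paper's approach buys is brevity at the cost of invoking Ext/K-theoretic machinery; what yours buys is a self-contained elementary argument with explicit control of the lifts, and it correctly isolates and resolves the only genuine obstacle (realizing the initial space as the \emph{prescribed} projection $R_0$ while remaining a compact perturbation), which is exactly neutralized by the finite rank of the spectral defect.
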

\begin{proof} Let $\A$ be a $C^*$-algebra generated in $\B(\ell_2)$
by $\{T_{j, i}: i, j\in \N\}$ and the compact operators. Then since $\mathcal B = \A/\K(\ell_2)$ is  generated by $\{[T_{j,i}]_{\K(\ell_2)}\in \mathcal A/\K(\ell_2): i,j\in \N \}$, it is isomorphic to $\K(\ell_2)$
(see Lemma \ref{short-exact-sequence}).
Since $K_0(\B)\cong \mathbb Z$ is a free abelian group, $\A$ is a trivial extension (see Exercise  16.4.7 of \cite{blackadar-k}), i.e., the short exact sequence 
$$0\rightarrow \mathcal K(\ell_2)\xrightarrow{\iota} \mathcal A 
\rightarrow\B\rightarrow 0$$
splits, which means $\{[T_{j,i}]_{\K(\ell_2)}\in \B: i,j\in \N \}$ lift.
\end{proof}

\subsection{$\Psi$-type $C^*$-algebras}

If $\mathcal T=\{T_{\xi, \eta}: \xi, \eta<\kappa\}$ is a system of almost matrix units,
we use $\mathcal A(\mathcal T)$ to denote the $C^*$-subalgebra of
$\mathcal B(\ell_2)$ generated by $\{T_{\xi, \eta}: \xi, \eta<\kappa\}$ and
the compact operators in 
$\mathcal B(\ell_2)$. 

\begin{lemma}\label{short-exact-sequence}
 Suppose that $\mathcal T=\{T_{\xi, \eta}: \xi, \eta<\kappa\}$ is a system of almost matrix units
in $\B(\ell_2)$.
The $C^*$-algebra $\mathcal A(\mathcal T)$
 satisfies the short
exact sequence
$$0\rightarrow \mathcal K(\ell_2)\xrightarrow{\iota} \mathcal A(\mathcal T)\xrightarrow{\pi}
 \mathcal K(\ell_2(\kappa))\rightarrow 0,$$
where ${\iota}[\mathcal K(\ell_2)]$ is an essential ideal of $\mathcal A(\mathcal T)$.
If $\kappa$ is uncountable, then the extension is not split, i.e., 
there is no $\sigma:\K(\ell_2(\kappa))\rightarrow \mathcal A(\mathcal T)$
such that $\pi\circ \sigma$ is the identity on $\K(\ell_2(\kappa))$.
\end{lemma}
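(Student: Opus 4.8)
The plan is to establish three things about the algebra $\A(\mathcal T)$: the existence of the short exact sequence with the claimed surjection $\pi$, the fact that $\iota[\K(\ell_2)]$ is an \emph{essential} ideal, and the non-splitting when $\kappa$ is uncountable. First I would observe that $\K(\ell_2)$ sits inside $\A(\mathcal T)$ by construction, and it is a closed two-sided ideal because it is already an ideal of the ambient $\B(\ell_2)$. To build the quotient map, I would pass to the Calkin algebra $\B(\ell_2)/\K(\ell_2)$ via the standard quotient $q$; the images $[T_{\eta,\xi}]_{\K(\ell_2)}$ satisfy the \emph{exact} matrix unit relations of Proposition \ref{matrixunitsiso} (the almost matrix relations of Definition \ref{almostunits} become genuine equalities modulo compacts), and since they are nonzero (each $T_{\eta,\xi}$ is noncompact), Proposition \ref{matrixunitsiso} identifies the $C^*$-subalgebra they generate with $\K(\ell_2(\kappa))$. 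Restricting $q$ to $\A(\mathcal T)$ then gives a surjection $\pi:\A(\mathcal T)\to \K(\ell_2(\kappa))$ whose kernel is exactly $\A(\mathcal T)\cap \K(\ell_2)=\K(\ell_2)$, which yields the short exact sequence.

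Next I would verify that $\iota[\K(\ell_2)]$ is essential, i.e., that any nonzero closed ideal $J$ of $\A(\mathcal T)$ meets $\K(\ell_2)$ nontrivially. Take $0\neq a\in J$; since $J$ is an ideal, $a^*a\in J$ is a nonzero positive element. I would use that $\K(\ell_2)$ is an essential ideal of $\B(\ell_2)$ (every nonzero operator has nonzero product with some rank-one projection), together with the fact that $\A(\mathcal T)$ contains all of $\K(\ell_2)$: multiplying $a$ on both sides by suitable compact (e.g.\ rank-one) operators, which lie in $\A(\mathcal T)$, produces a nonzero element of $J\cap \K(\ell_2)$. This confirms essentiality.

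Finally, for the non-splitting claim, suppose toward a contradiction that there is a $*$-homomorphism $\sigma:\K(\ell_2(\kappa))\to \A(\mathcal T)$ with $\pi\circ\sigma=\mathrm{id}$. Since $\K(\ell_2(\kappa))$ is generated by its matrix units $\{f_{\eta,\xi}\}$, the images $\sigma(f_{\eta,\xi})=:E_{\eta,\xi}$ would form an \emph{exact} system of matrix units in $\A(\mathcal T)\subseteq \B(\ell_2)$ with $E_{\eta,\xi}=^\K T_{\eta,\xi}$; in particular $\{E_{\xi,\xi}:\xi<\kappa\}$ would be pairwise orthogonal genuine projections, each infinite-dimensional (being noncompact, as $E_{\xi,\xi}=^\K T_{\xi,\xi}$). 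The main obstacle, and the crux of the argument, is to derive a contradiction from having uncountably many pairwise orthogonal nonzero projections inside $\B(\ell_2)$ that are images of a splitting: the off-diagonal units $E_{\eta,0}=E_{0,\eta}^*$ would be partial isometries with orthogonal initial and final spaces, forcing the ranges of the $E_{\xi,\xi}$ to be an uncountable family of mutually orthogonal nonzero closed subspaces of the \emph{separable} Hilbert space $\ell_2$, which is impossible. I would state this separability obstruction carefully — an orthogonal family of nonzero subspaces in a separable space is at most countable — and conclude that no such $\sigma$ exists when $\kappa$ is uncountable.
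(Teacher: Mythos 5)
Your proposal is correct and follows essentially the same route as the paper: pass to the Calkin algebra where the almost matrix unit relations become exact and invoke Proposition \ref{matrixunitsiso}, note that essentiality of $\K(\ell_2)$ in $\A(\mathcal T)$ is inherited from its essentiality in $\B(\ell_2)$, and rule out a splitting because it would produce an uncountable family of pairwise orthogonal nonzero projections in the separable space $\ell_2$. Your unpacking of the essentiality step via rank-one compressions and of the non-splitting step via the images $\sigma(f_{\xi,\xi})$ is just a more explicit rendering of the paper's argument.
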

\begin{proof} The map $\iota$ is the inclusion. 
 Since $\mathcal K(\ell_2)\subseteq \mathcal A(\mathcal T)\subseteq \B(\ell_2)$ and
$\mathcal K(\ell_2)$ is an essential ideal in $\B(\ell_2)$, we conclude that
$\iota[\K(\ell_2)]$ is an essential ideal of $A(\mathcal T)$.

The operators
 $\{[T_{\xi, \eta}]_{\K(\ell_2)}\in \mathcal A/\K(\ell_2): \xi, \eta <\kappa \}$
generate $\mathcal A(\mathcal T)/ \mathcal K(\ell_2)$ 
(by the definition of $\mathcal A(\mathcal T)$) and satisfy the matrix unit 
relations in $\B(\ell_2)/\mathcal K(\ell_2)$, that is 
\begin{itemize}
\item $[T_{\xi,\eta}]_{\K(\ell_2)}^* = [T_{\eta,\xi}]_{\K(\ell_2)}$, 
\item $[T_{\beta, \alpha}]_{\K(\ell_2)}
 [T_{\eta, \xi}]_{\K(\ell_2)}=\delta_{\alpha, \eta}[T_{\beta, \xi}]_{\K(\ell_2)}$.
\end{itemize}
Thus $\mathcal A(\mathcal T)/\mathcal K(\ell_2) \cong \mathcal K(\ell_2(\kappa))$ by Proposition
\ref{matrixunitsiso}.
If $\kappa$ is uncountable, then we observe that $\K(\ell_2(\kappa))$ can not be embedded
into $\B(\ell_2)$ and so it can not be embedded into $\mathcal A(\mathcal T)$. This
follows from the fact that $\B(\ell_2)$ does not contain any uncountable family of pairwise orthogonal projections, while  $\K(\ell_2(\kappa))$ clearly does, if $\kappa$ is uncountable.
\end{proof}

We say a $C^*$-algebra is \emph{$\Psi$-type} if it is of the form $\A (\mathcal T)$ for a system of almost matrix units $\mathcal T$.
These $C^*$-algebras are the natural noncommutative analogues of the $\Psi$-spaces
in topology, which are induced by almost disjoint families (see Definition 2.6 of \cite{hrusak}). 
In topology $\Psi$-spaces are 
classical examples of separable locally compact Hausdorff scattered (every nonempty subset has a relative isolated point)
 spaces with the Cantor-Bendixson height two. Granting the role of isolated points to minimal projections in  $C^*$-algebras,
one can define scattered  $C^*$-algebras. 
A projection $p$ in a $C^*$-algebra $\A$ is called minimal if $p\A p = \mathbb C p$ and 
a  $C^*$-algebra is scattered if every nonzero subalgebra has a minimal projection  
(see \cite{scattered} for more on scattered $C^*$-algebras). Just like the
scattered spaces, these algebras can be analyzed using the ``Cantor-Bendixson sequences". 
For a $C^*$-algebra $\A$ let $\I^{At}(\mathcal A)$ denote the subalgebra of $\mathcal A$ generated by 
the minimal projections of $\mathcal A$. The subalgebra $\I^{At}(\mathcal A)$ turns out to be an ideal isomorphic 
to a subalgebra of all compact operators in any faithful representation of $\A$ and 
in fact is the largest ideal with this property (Proposition 3.15 and Proposition 3.16 of \cite{scattered}). 
A $C^*$-algebra $\A$ is  scattered if and only if there is an ordinal $ht(\mathcal A)$
and a  increasing sequence of closed ideals $(\I_\alpha)_{\alpha\leq ht(\mathcal A)}$
such that  $\I_0=\{0\}$, $\I_{ht(\mathcal A)}= \mathcal A$, if $\alpha$ is a limit ordinal 
$\I_{\alpha}= \overline{\bigcup_{\beta<\alpha} \I_{\beta}}$, and
$$\I_{\alpha+1}/ \I_{\alpha} = \I^{At}(\mathcal A /\I_\alpha),$$
for every $\alpha < ht(\mathcal A)$ (Theorem 1.4 of \cite{scattered}).
The sequence $(\I_\alpha)_{\alpha\leq ht(\mathcal A)}$ is called the Cantor-Bendixson sequence
for $\A$ and the ordinal $ht(\A)$ is called the Cantor-Bendixson height or simply the height of $\A$.
\begin{proposition} Suppose that $\mathcal T=\{T_{\xi, \eta}: \xi, \eta<\kappa\}$ is a system of almost matrix units.
The $C^*$-algebra $\mathcal A(\mathcal T)$
 is a scattered $C^*$-algebra of height $2$.
Therefore $\A(\mathcal T)$ is GCR (type I) and AF, in the sense that every
finite set of elements can be approximated from a finite dimensional subalgebra.
\end{proposition}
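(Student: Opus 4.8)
The plan is to exhibit the Cantor--Bendixson sequence of $\mathcal A(\mathcal T)$ explicitly and check it has length $2$, then to read off the last sentence from the general theory of scattered algebras. The candidate sequence is $\I_0=\{0\}$, $\I_1=\mathcal K(\ell_2)$, $\I_2=\mathcal A(\mathcal T)$, so the whole task reduces to the two computations $\I^{At}(\mathcal A(\mathcal T))=\mathcal K(\ell_2)$ and $\I^{At}(\mathcal A(\mathcal T)/\mathcal K(\ell_2))=\mathcal A(\mathcal T)/\mathcal K(\ell_2)$, after which the defining recursion of the sequence does the rest.

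The substantive step is the identification $\I^{At}(\mathcal A(\mathcal T))=\mathcal K(\ell_2)$. For the inclusion $\supseteq$, I would note that each rank-one projection $e\in\mathcal K(\ell_2)$ is a minimal projection of $\mathcal A(\mathcal T)$: if $\xi$ spans the range of $e$, then $eae=\langle a\xi,\xi\rangle e$ for every $a\in\mathcal B(\ell_2)$, so $e\,\mathcal A(\mathcal T)\,e=\C e$; and the rank-one projections generate $\mathcal K(\ell_2)$. For the reverse inclusion it suffices, since $\mathcal K(\ell_2)$ is a closed ideal and $\I^{At}$ is the subalgebra generated by the minimal projections, to show that every minimal projection $p$ of $\mathcal A(\mathcal T)$ is of finite rank. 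Here is the key point: because $\mathcal K(\ell_2)\subseteq\mathcal A(\mathcal T)$, minimality forces $p\,\mathcal K(\ell_2)\,p\subseteq p\,\mathcal A(\mathcal T)\,p=\C p$; but $p\,\mathcal K(\ell_2)\,p=\mathcal K(p[\ell_2])$ is infinite dimensional whenever $p$ has infinite rank, a contradiction. Hence $p\in\mathcal K(\ell_2)$, and equality follows.

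With $\I_1=\mathcal K(\ell_2)$ established, Lemma \ref{short-exact-sequence} identifies the quotient $\mathcal A(\mathcal T)/\I_1$ with $\mathcal K(\ell_2(\kappa))$, which is generated by its minimal (rank-one) projections; thus $\I^{At}(\mathcal A(\mathcal T)/\I_1)$ is the whole quotient, forcing $\I_2=\mathcal A(\mathcal T)$. Since the $T_{\xi,\eta}$ are noncompact we have $\I_1=\mathcal K(\ell_2)\subsetneq\mathcal A(\mathcal T)=\I_2$, so the sequence does not stabilise before stage $2$, and therefore $ht(\mathcal A(\mathcal T))=2$ by the characterisation of scattered algebras recalled above.

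Finally, for the ``therefore'' clause both conclusions follow from scatteredness. For type I (GCR) I would observe that the composition series $\{0\}\subset\mathcal K(\ell_2)\subset\mathcal A(\mathcal T)$ has subquotients $\mathcal K(\ell_2)$ and $\mathcal K(\ell_2(\kappa))$, each an algebra of compact operators and hence CCR, and a $C^*$-algebra admitting a composition series with CCR subquotients is GCR. The AF property in the stated sense is precisely the assertion that scattered $C^*$-algebras are approximately finite dimensional, which I would quote from \cite{kusuda-af}. I expect the only genuine obstacle to be the identification $\I^{At}(\mathcal A(\mathcal T))=\mathcal K(\ell_2)$, and inside it the rank argument ruling out infinite-rank minimal projections; the remaining steps are bookkeeping with the short exact sequence and the definition of the Cantor--Bendixson sequence.
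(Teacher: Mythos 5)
Your proof is correct and follows essentially the same route as the paper: the same Cantor--Bendixson sequence $\{0\}\subseteq\K(\ell_2)\subseteq\A(\mathcal T)$, the same appeal to Lemma \ref{short-exact-sequence} for the second subquotient, and the same composition-series argument for GCR together with the quoted fact that scattered algebras are AF. The only difference is that where the paper cites Proposition 3.21 of \cite{scattered} for the identification $\I^{At}(\A(\mathcal T))=\K(\ell_2)$, you prove it directly (rank-one projections are minimal, and an infinite-rank minimal projection $p$ would force the infinite-dimensional algebra $p\,\K(\ell_2)\,p$ into $\C p$), which is a valid and self-contained substitute.
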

\begin{proof} 
 Since $\mathcal K(\ell_2)\subseteq \mathcal A(\mathcal T)\subseteq \B(\ell_2)$, by Proposition 3.21
of \cite{scattered} 
 we have
 $\I_1= \I^{At}(\mathcal A(\mathcal T))= \mathcal K(\ell_2)$
 and also $ \I_2/\I_1 = \I^{At}(\A/ \I^{At}(\mathcal A(\mathcal T)) 
\cong
 \mathcal K(\ell_2(\kappa))$
 by Lemma \ref{short-exact-sequence},
and therefore $\I_2=\A(\mathcal T)$.

The composition series $(0, \K(\ell_2), \A(\mathcal T))$ witnesses the fact that 
 $\mathcal A(\mathcal T)$ is GCR (see IV.1.3 of \cite{Blackadar}). 
 Also every scattered $C^*$-algebra is AF (see \cite{lin}, cf. \cite{scattered}).
\end{proof}

Let us conclude this section by observing the contrast between the separable and nonseparable
 case for the extensions of an algebra of compact operators by compact operators.

\begin{proposition}\label{extensions-unique}
Suppose that $\mathcal B$ is a  $C^*$-algebra satisfying the short exact sequence
$$
0\rightarrow \K(\ell_2)\xrightarrow{i} \B \xrightarrow{q} \K(\ell_2) \rightarrow 0,
$$
where $i[\K(\ell_2)]$ is an essential ideal of $\B$. Then $\B$ is *-isomorphic to 
 $\widetilde{\mathcal K(\ell_2)}$
 $\otimes \mathcal K(\ell_2)$.
\end{proposition}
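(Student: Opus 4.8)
\section*{Proof proposal}

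The plan is to realize $\B$ concretely on $\ell_2$, recognize it as $\A(\mathcal T)$ for a \emph{countable} system of almost matrix units, promote the almost matrix units to genuine matrix units, and then read off the tensor decomposition. First I would use essentiality to get a concrete picture. Since $i[\K(\ell_2)]$ is an essential ideal of $\B$, the canonical $*$-homomorphism $\B\to \mathcal M(i[\K(\ell_2)])\cong \mathcal M(\K(\ell_2))=\B(\ell_2)$ is injective and sends $i[\K(\ell_2)]$ onto the standard copy of $\K(\ell_2)$ inside $\B(\ell_2)$. Identifying $\B$ with its image, I may assume $\K(\ell_2)\subseteq \B\subseteq \B(\ell_2)$, that $\K(\ell_2)$ is the kernel of $q$, and that $q$ is the restriction to $\B$ of the Calkin quotient map $\pi$.

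Next I would show $\B=\A(\mathcal T)$ for a countable $\mathcal T$. The quotient $\K(\ell_2)=q[\B]$ carries matrix units $\{e_{ij}:i,j\in\N\}$; lift each to some $T_{ij}\in\B$ with $q(T_{ij})=e_{ij}$. Since $e_{ij}\neq 0$, each $T_{ij}$ is noncompact, and because the $e_{ij}$ satisfy the exact matrix unit relations in the Calkin algebra, the $T_{ij}$ satisfy the almost matrix unit relations of Definition \ref{almostunits}. As $\K(\ell_2)\subseteq \B$ and $q[\,C^*(\{T_{ij}\},\K(\ell_2))\,]=C^*(\{e_{ij}\})=\K(\ell_2)=q[\B]$, the standard argument (an element of $\B$ agrees modulo the kernel with an element of the subalgebra, and the kernel is contained in the subalgebra) gives $\B=\A(\mathcal T)$ with $\mathcal T=\{T_{ij}:i,j\in\N\}$.

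Now I would apply Lemma \ref{lifting-compact} to replace the almost matrix units by a genuine system of matrix units $\{E_{ij}\}$ in $\B(\ell_2)$ with $E_{ij}=^\K T_{ij}$; since $E_{ij}-T_{ij}\in\K(\ell_2)\subseteq \B$ we have $E_{ij}\in\B$ and $\B=C^*(\{E_{ij}\},\K(\ell_2))$. The projections $E_{jj}$ are pairwise orthogonal with infinite dimensional ranges $V_j$. After modifying the $E_{ij}$ by compact operators I would arrange that $\sum_j E_{jj}=1$. With this in place, the identification $V_0\otimes\ell_2(\N)\to\ell_2$, $\ \xi\otimes e_j\mapsto E_{j0}\xi$, turns $\{E_{ij}\}$ into $\{1_{V_0}\otimes e_{ij}\}$, so $C^*(\{E_{ij}\})=1_{V_0}\otimes\K(\ell_2(\N))$ and $\K(\ell_2)=\K(V_0)\otimes\K(\ell_2(\N))$. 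Using that $A+\K$ is automatically closed when $A$ is a $C^*$-subalgebra and $\K$ a closed ideal, I would then conclude
$$\B=\big(\C\,1_{V_0}+\K(V_0)\big)\otimes\K(\ell_2(\N))=\widetilde{\K(V_0)}\otimes\K(\ell_2)\cong\widetilde{\K(\ell_2)}\otimes\K(\ell_2).$$

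The main obstacle is the normalization $\sum_j E_{jj}=1$ in the last paragraph: the orthogonal complement $H_1=\big(1-\sum_j E_{jj}\big)[\ell_2]$ may be infinite dimensional, and it \emph{cannot} be absorbed into a single column $V_0$, since enlarging one $E_{jj}$ by an infinite rank projection would destroy the relation $E_{jj}=^\K T_{jj}$. I would resolve this by splitting $H_1=\bigoplus_j F_j$ into pieces of equal (one) dimension and distributing them across all columns, replacing each $U_j\colon V_j\to V_0$ by an isometry $V_j\oplus F_j\to V_0\oplus F_0$ of the form $E_{0j}\oplus(\text{rank one } F_j\to F_0)$; the resulting matrix units differ from the old ones by finite rank operators, hence still lift the same $e_{ij}$, while now covering all of $\ell_2$. (Alternatively, one could avoid the normalization entirely: $E_{00}$ is a full projection with $E_{00}\B E_{00}=\C E_{00}+\K(V_0)\cong\widetilde{\K(\ell_2)}$, so Brown's stable isomorphism theorem gives $\B\otimes\K\cong\widetilde{\K(\ell_2)}\otimes\K\cong\widetilde{\K(\ell_2)}\otimes\K$, which combined with the stability of $\B$ — coming from the stability of the quotient $\K(\ell_2)$ via the separable extension results quoted in the introduction — yields $\B\cong\widetilde{\K(\ell_2)}\otimes\K$; I prefer the explicit route above as it is self-contained.)
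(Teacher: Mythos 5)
Your proof is correct, but it takes a genuinely different route from the paper's. The paper's argument is two sentences of quoted machinery: since $K_0(\K(\ell_2))\cong\Z$ is free abelian the extension splits (Exercise 16.4.7 of \cite{blackadar-k}), and all nonunital split essential extensions of $\K(\ell_2)$ by $\K(\ell_2)$ are equivalent and hence isomorphic (II.8.4.30 of \cite{Blackadar}); as $\widetilde{\K(\ell_2)}\otimes\K(\ell_2)$ is one such extension, $\B$ must be isomorphic to it. You instead realize $\B$ spatially inside $\B(\ell_2)$ via essentiality of the ideal, identify it as $\A(\mathcal T)$ for a countable system of almost matrix units, invoke Lemma \ref{lifting-compact} to replace these by exact matrix units $E_{ij}$, and then build the isomorphism with $\widetilde{\K(\ell_2)}\otimes\K(\ell_2)$ by hand. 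The splitting is still the essential input --- it is hidden inside Lemma \ref{lifting-compact} --- but where the paper then quotes the uniqueness of split essential extensions, you prove that uniqueness concretely, and you correctly isolate and repair the one real subtlety in doing so: the defect projection $1-\sum_j E_{jj}$ may have infinite rank and cannot be absorbed into a single column without destroying $E_{jj}=^{\K}T_{jj}$, so it must be distributed in finite-rank pieces across all columns (with the obvious adjustment when the defect space is finite-dimensional, a case you should at least mention). What the paper's route buys is brevity; what yours buys is a self-contained, explicit unitary implementing the isomorphism and no reliance on II.8.4.30. Your parenthetical alternative via fullness of $E_{00}$ and Brown's stable isomorphism theorem is also sound, though it leans on the stability of $\B$, i.e., exactly the separable extension results the paper cites only for context.
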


\begin{proof}
It is enough to show that $\B$ is unique up to *-isomorphism. Since $K_0(\K(\ell_2))$ is a free abelian group the sequence
above splits (Exercise  16.4.7 of \cite{blackadar-k}). All the nonunital split essential extensions of 
$\K(\ell_2)$ by $\K(\ell_2)$ are equivalent and therefore isomorphic (see II.8.4.30 of \cite{Blackadar}).

\end{proof}

\section{Multipliers of systems of  almost matrix units}

Let $\A$ be a  nondegenerate subalgebras  of $\B(\ell_2)$. A multiplier of (or a multiplier for) $\A$ is an operator $T$ in $\B(\ell_2)$ such that 
$T\mathcal A\subseteq \mathcal A$ and  $ \mathcal A T \subseteq \A$. An operator $T$ in $\B(\ell_2)$ is called a quasi-multiplier of $\A$ if $\A T \A \subseteq \A$. We denote the set of multipliers of $\A$ by $\mathcal M(\A)$ and the set of all quasi-multipliers of $\A$ by $\mathcal Q\mathcal M(\A)$. It is well-known that $\mathcal Q\mathcal M(\A)$ is a norm closed *-invariant subspace of $\A''$ and $\mathcal M(\A)$ is a $C^*$-subalgebra of $\A''$ and of course $\A \subseteq \mathcal M(\A) \subseteq \mathcal Q\mathcal M(\A)$ (see 3.12 of \cite{Ped}).

\begin{lemma}\label{multipliers-for-T} Suppose that $\mathcal T=\{T_{\eta, \xi}: \xi, \eta<\kappa\}
\subseteq \mathcal B(\ell_2)$
is a system of almost matrix units. Then the following are equivalent:
\begin{enumerate}
\item $R\in \mathcal Q\mathcal M(\A(\mathcal T))$,
\item for every $\xi, \eta<\kappa$ there is $\lambda_{\xi, \eta}^\mathcal T(R)\in \C$ such that
$$T_{\eta, \eta}R  T_{\xi, \xi}=^\K  \lambda_{\eta, \xi}^\mathcal T(R) T_{\eta, \xi}.$$
\end{enumerate}
\end{lemma}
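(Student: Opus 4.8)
The plan is to pass to the Calkin algebra $\mathcal{C}=\B(\ell_2)/\K(\ell_2)$ and to exploit the identification $\A(\mathcal T)/\K(\ell_2)\cong\K(\ell_2(\kappa))$ furnished by Lemma \ref{short-exact-sequence}. Write $\pi$ for the quotient map, and set $q_\xi=\pi(T_{\xi,\xi})$ and $u_{\eta,\xi}=\pi(T_{\eta,\xi})$; by the almost matrix unit relations of Definition \ref{almostunits} these are genuine projections and matrix units in $\mathcal{C}$, and under the isomorphism above $q_\xi$ and $u_{\eta,\xi}$ correspond respectively to the minimal projection $e_{\xi\xi}$ and the matrix unit $e_{\eta\xi}$ of $\K(\ell_2(\kappa))$.

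For the direction (1)$\Rightarrow$(2) I would argue as follows. Assuming $R\in\mathcal Q\mathcal M(\A(\mathcal T))$, the element $T_{\eta,\eta}RT_{\xi,\xi}$ lies in $\A(\mathcal T)$, because $T_{\eta,\eta},T_{\xi,\xi}\in\A(\mathcal T)$ and $R$ is a quasi-multiplier. Hence its image $\pi(T_{\eta,\eta}RT_{\xi,\xi})=q_\eta\,\pi(R)\,q_\xi$ sits in the corner $q_\eta\,(\A(\mathcal T)/\K(\ell_2))\,q_\xi$. The key observation is that this corner is one-dimensional: under the isomorphism with $\K(\ell_2(\kappa))$ it equals $e_{\eta\eta}\K(\ell_2(\kappa))e_{\xi\xi}=\C\,e_{\eta\xi}$, since $e_{\eta\eta}$ and $e_{\xi\xi}$ are minimal projections. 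Therefore $\pi(T_{\eta,\eta}RT_{\xi,\xi})=\lambda\,u_{\eta,\xi}$ for a unique scalar $\lambda$, which I would name $\lambda_{\eta,\xi}^{\mathcal T}(R)$; lifting back to $\B(\ell_2)$ gives exactly $T_{\eta,\eta}RT_{\xi,\xi}=^\K\lambda_{\eta,\xi}^{\mathcal T}(R)\,T_{\eta,\xi}$.

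For (2)$\Rightarrow$(1) I would first note that the $*$-algebra generated by $\mathcal T$ and $\K(\ell_2)$ is $\A_0:=\mathrm{span}_{\C}\{T_{\eta,\xi}\}+\K(\ell_2)$, whose closure is $\A(\mathcal T)$; this uses only that products and adjoints of the $T$'s are, modulo compacts, again single $T$'s. Since the map $(a,b)\mapsto aRb$ is jointly continuous and $\A(\mathcal T)$ is closed, it suffices to verify $aRb\in\A(\mathcal T)$ for $a,b$ ranging over the generators of $\A_0$. If either factor is compact the product lands in $\K(\ell_2)\subseteq\A(\mathcal T)$, as $\K(\ell_2)$ is an ideal of $\B(\ell_2)$; so the only remaining case is $a=T_{\gamma,\delta}$, $b=T_{\eta,\xi}$. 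Here I would insert diagonal units, using $T_{\gamma,\delta}=^\K T_{\gamma,\delta}T_{\delta,\delta}$ and $T_{\eta,\xi}=^\K T_{\eta,\eta}T_{\eta,\xi}$ together with hypothesis (2) in the form $T_{\delta,\delta}RT_{\eta,\eta}=^\K\lambda_{\delta,\eta}^{\mathcal T}(R)\,T_{\delta,\eta}$; the matrix unit relations then collapse the product to
\[
T_{\gamma,\delta}RT_{\eta,\xi}=^\K\lambda_{\delta,\eta}^{\mathcal T}(R)\,T_{\gamma,\xi}\in\A(\mathcal T),
\]
the compact error being absorbed into $\K(\ell_2)\subseteq\A(\mathcal T)$.

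The computations are routine once set up, so I expect the only genuinely delicate point to be the corner argument in (1)$\Rightarrow$(2): one must remember that $\pi(R)$ itself need \emph{not} lie in $\A(\mathcal T)/\K(\ell_2)$, since $R$ is merely a quasi-multiplier, and that it is precisely the compression $q_\eta\pi(R)q_\xi$ — produced by multiplying on both sides by elements of $\A(\mathcal T)$ — that falls into the one-dimensional corner. Keeping track of the inner indices through the collapsing identities in (2)$\Rightarrow$(1), and checking that chaining several $=^\K$ relations through multiplication by the bounded operators $T_{\gamma,\delta}$, $R$, $T_{\eta,\xi}$ preserves compactness of the error, are the remaining bookkeeping items to watch.
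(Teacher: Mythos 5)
Your proof is correct and follows essentially the same route as the paper: the paper's one-line justification that ``the only operators $S$ in $\A(\mathcal T)$ with $S=^\K T_{\eta,\eta}ST_{\xi,\xi}$ are compact perturbations of multiples of $T_{\eta,\xi}$'' is exactly your one-dimensional-corner argument in $\A(\mathcal T)/\K(\ell_2)\cong\K(\ell_2(\kappa))$, and the converse, which the paper dismisses as immediate from the definition of $\A(\mathcal T)$, is the density-plus-collapsing computation you spell out. You have merely made explicit the details the paper leaves to the reader.
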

\begin{proof}
Suppose that $R$ is a quasi-multiplier of $\A(\mathcal T)$ and $\xi, \eta<\kappa$ are given. Then $S =T_{\eta, \eta}R  T_{\xi, \xi}$ is an operator in $\A$  satisfying $S=^{\mathcal K}T_{\eta, \eta}ST_{\xi, \xi}$. The only operators in $\A$   with this property are compact perturbations of constant multiples of $T_{\eta, \xi}$. The other implication follows immediately from the definition of $\A(\mathcal T)$. 
\end{proof}

 If $R$ is a quasi-multiplier of $\A(\mathcal T)$, let $\Lambda^\mathcal T (R)$ denote the $\kappa\times \kappa$ matrix 
$(\lambda_{\eta, \xi}^{\mathcal T}(R))_{\xi,\eta<\kappa}$ over $\mathbb C$. 
If $\mathcal T$ is clear from the context, 
we often drop the superscript $\mathcal T$, and write 
$\Lambda(R) = (\lambda_{\eta, \xi}(R))_{\xi,\eta<\kappa}$. 

In the following, we use $I_\kappa$ to denote 
the $\kappa \times \kappa$ matrix which has constant $1$
 on the diagonal and zero everywhere else,  where $\kappa$ is a cardinal.
 We will also use $1_{B (\ell_2)}$ to denote the unit element
 of $\mathcal B (\ell_2)$. When considering $\kappa\times\kappa$
matrices we can treat some of them as operators in $\mathcal B (\ell_2(\kappa))$. 
Namely, for a fixed (the canonical) orthonormal basis $\{e_\xi: \xi<\kappa\}$ for $\ell_2(\kappa)$, we identify
 operators $T_M \in \B(\ell_2(\kappa))$ defined by  $T_M(e_\xi)(\eta)=m_{\eta, \xi}$ with the 
$\kappa\times\kappa$ matrix  $M=(m_{\xi, \eta})$. So, for example,
$T_{I_\kappa}$ is the unit of $\mathcal B (\ell_2(\kappa))$ and
$T_M$ is compact if $M$ is a matrix which has only finitely many nonzero entries.
In particular, we will say that a matrix $M$ is a matrix of a compact operator 
if $T_M$ is compact. The operations of addition, multiplication by scalar and the
transposition of $\kappa\times\kappa$ matrices should be clear.

\begin{lemma}\label{rows} Assume $\mathcal T$ is
   a  system of almost 
matrix units of size $\kappa$ and $R\in \B(\ell_2)$ is 
a quasi-multiplier of $\A(\mathcal T)$. Then $\Lambda^{\mathcal T}(R)$ is a matrix 
of a bounded linear operator on $\ell_2(\kappa)$ of norm not
bigger than $\|R\|$.  In particular, all 
 rows and columns of the matrix $\Lambda^{\mathcal T}(R)$ are in $\ell_2(\kappa)$.
\end{lemma}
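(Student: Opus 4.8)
The plan is to bound every finite principal submatrix of $\Lambda^{\mathcal T}(R)$ by $\|R\|$ and then invoke the standard criterion that an infinite matrix all of whose finite principal submatrices have operator norm at most $c$ is itself the matrix of a bounded operator of norm at most $c$.

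First I would fix a finite set $F\subseteq\kappa$ and form the operator $Q_F=\sum_{\xi\in F}T_{\xi,\xi}$. Applying Lemma \ref{multipliers-for-T} termwise gives
$$Q_F R Q_F=\sum_{\eta,\xi\in F}T_{\eta,\eta}R\,T_{\xi,\xi} =^\K \sum_{\eta,\xi\in F}\lambda_{\eta,\xi}^{\mathcal T}(R)\,T_{\eta,\xi},$$
so in the Calkin algebra $\mathcal B(\ell_2)/\mathcal K(\ell_2)$ the class $[Q_F R Q_F]$ equals $\sum_{\eta,\xi\in F}\lambda_{\eta,\xi}^{\mathcal T}(R)\,[T_{\eta,\xi}]$.

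The key step is to identify the Calkin-algebra norm of this combination with the operator norm of the $F\times F$ matrix $\Lambda_F:=(\lambda_{\eta,\xi}^{\mathcal T}(R))_{\eta,\xi\in F}$ on $\ell_2(F)$. Since the $T_{\eta,\xi}$ are noncompact, the classes $\{[T_{\eta,\xi}]:\eta,\xi\in F\}$ are nonzero and, by the relations in Definition \ref{almostunits}, form a genuine system of matrix units in the Calkin algebra; hence by Proposition \ref{matrixunitsiso} they generate a copy of $M_{|F|}$, and this $*$-isomorphism, being isometric, carries $\sum_{\eta,\xi\in F}\lambda_{\eta,\xi}^{\mathcal T}(R)[T_{\eta,\xi}]$ to the matrix $\Lambda_F$. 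Thus $\|\Lambda_F\|_{\mathcal B(\ell_2(F))}=\|[Q_F R Q_F]\|$. On the other hand $[Q_F]=\sum_{\xi\in F}[T_{\xi,\xi}]$ is a finite sum of pairwise orthogonal projections in the Calkin algebra, hence a projection of norm at most $1$, so $\|[Q_F R Q_F]\|\le \|[R]\|\le\|R\|$. Combining, $\|\Lambda_F\|\le\|R\|$ for every finite $F$.

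Finally I would pass to the whole matrix: for finitely supported $v,w\in\ell_2(\kappa)$ supported inside some finite $F$ one has $\langle \Lambda^{\mathcal T}(R)v,w\rangle=\langle \Lambda_F v,w\rangle$, whence $|\langle \Lambda^{\mathcal T}(R)v,w\rangle|\le\|\Lambda_F\|\,\|v\|\,\|w\|\le\|R\|\,\|v\|\,\|w\|$. Since finitely supported vectors are dense, this bilinear estimate extends to all of $\ell_2(\kappa)$ and shows that $\Lambda^{\mathcal T}(R)$ is the matrix of a bounded operator of norm at most $\|R\|$; its columns (images of basis vectors) and rows (images under the adjoint) then lie in $\ell_2(\kappa)$. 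The only genuinely delicate point is the isometric identification in the third paragraph, i.e.\ that the Calkin-quotient norm of $\sum_{\eta,\xi\in F}\lambda_{\eta,\xi}^{\mathcal T}(R)[T_{\eta,\xi}]$ is exactly $\|\Lambda_F\|$; everything else is the operator-theoretic bookkeeping above.
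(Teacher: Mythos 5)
Your proof is correct, but it reaches the crucial finite-dimensional estimate by a genuinely different route from the paper. Both arguments reduce the lemma to showing $\|\Lambda_F\|\le\|R\|$ for every finite $F\subseteq\kappa$, where $\Lambda_F=(\lambda^{\mathcal T}_{\eta,\xi}(R))_{\eta,\xi\in F}$, and your closing sesquilinear-form argument for passing from the finite blocks to the whole matrix is exactly the standard completion that the paper leaves implicit in its opening ``it is enough to prove $(*)$''. For the finite bound itself, the paper stays inside $\mathcal B(\ell_2)$: it lifts $\{T_{\eta,\xi}:\eta,\xi\in F\}$ to an exact system of matrix units $\{E_{\eta,\xi}\}$ via Lemma \ref{lifting-compact}, writes $E_{\eta,\eta}RE_{\xi,\xi}=\lambda_{\eta,\xi}(R)E_{\eta,\xi}+S_{\eta,\xi}$ with $S_{\eta,\xi}$ compact, and then tests $R$ against vectors $\sum_{\xi\in F}c_\xi w_\xi$, where the unit vectors $w_\xi$ lie in the ranges of the $E_{\xi,\xi}$, are linked by $w_\eta=E_{\eta,\xi}(w_\xi)$, and are chosen far enough along an orthonormal basis that the compact errors are negligible; the Pythagorean theorem then yields $\|R(\sum_{\xi}c_\xi w_\xi)\|^2\ge\sum_{\eta\in F}|\sum_{\xi\in F}\lambda_{\eta,\xi}c_\xi|^2-\varepsilon$. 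You instead pass to the Calkin algebra, where the matrix-unit relations hold exactly for free, identify $\|\sum_{\eta,\xi}\lambda_{\eta,\xi}[T_{\eta,\xi}]\|$ with $\|\Lambda_F\|$ through the isometric copy of $M_{|F|}$ guaranteed by Proposition \ref{matrixunitsiso}, and bound $\|[Q_FRQ_F]\|$ by compressing $[R]$ with the projection $[Q_F]$. Your version is softer and shorter: no lifting lemma, no $\varepsilon$-bookkeeping, and the only nontrivial input is that a $*$-isomorphism of $C^*$-algebras is isometric --- an identification the paper itself invokes in the proof of Lemma \ref{compact-in-at}. What the paper's hands-on version buys is the concrete test-vector technique that it reuses later (e.g.\ in Lemma \ref{countably-nonzero}), where a purely quotient-norm argument would not suffice. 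Both are complete proofs of the lemma.
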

\begin{proof}
It is enough to prove that for any finite $F\subseteq \kappa$ and for any
$(c_\xi)_{\xi\in F}\subseteq \C$ such that $\Sigma_{\xi\in F}|c_\xi|\leq 1$
we have  
$$\sqrt{\Sigma_{\eta\in F}|\Sigma_{\xi\in F}\lambda_{\eta, \xi}c_\xi|^2}\leq\|R\|.\leqno (*)$$
Using Lemma \ref{lifting-compact} we have a system of matrix units $(E_{\eta, \xi})_{\eta, \xi \in F}$ 
in $\B(\ell_2)$ such that $T_{\eta, \xi}=^\K E_{\eta, \xi}$ for every $\xi, \eta\in F$.
It follows that 
$$E_{\eta, \eta}RE_{\xi, \xi}=\lambda_{\eta, \xi}(R)E_{\eta, \xi}+S_{\eta, \xi},$$
where $S_{\eta, \xi}$ is a compact operator, for each $\xi,\eta\in F$. 
For a given $\epsilon>0$ we will find a norm one vector $w\in \ell_2$ such that 
$\|R(w)\|^2\geq \Sigma_{\eta\in F}|\Sigma_{\xi\in F}\lambda_{\eta, \xi}c_\xi|^2-\varepsilon$,
which will prove $ (*)$.

By considering an infinite orthonormal basis  in the ranges of each $E_{\xi, \xi}$ 
for $\xi\in F$ and using Lemma \ref{compact-elementary} (2)  we can find norm 1 vectors
$w_\xi$ in the ranges of $E_{\xi,\xi}$, respectively, such that
$$\Sigma_{\eta\in F}\Sigma_{\xi\in F}|c_\xi|^2 \|S_{\eta, \xi}(w_\xi)\|^2<\varepsilon,$$
and  $w_{\eta} = E_{\eta,\xi}(w_\xi)$ for  $\xi,\eta\in F$. The last statement follows from the fact that
$E_{\eta, \xi}$s are partial isometries, so all the orthonormal bases  may be
considered to be the images of a fixed orthonormal basis in $E_{\eta, \eta}$. 

So by the pairwise orthogonality of $E_{\xi, \xi}$s for $\xi\in F$ and by the 
Pythagorean theorem we have 
\begin{align*}
\|R(\Sigma_{\xi\in F}c_\xi w_\xi)\|^2
\geq \Sigma_{\eta\in F}|\Sigma_{\xi\in F}\lambda_{\eta, \xi}c_\xi|^2   -  \varepsilon,
\end{align*}
which completes the proof.
\end{proof}

In particular  by Lemma \ref{rows} all columns and rows  can have at most countably many nonzero entries. 
Therefore if  $\kappa$ is an uncountable cardinal and 
  $R$ is a  quasi-multiplier of $\A(\mathcal T)$,
then for every $\xi<\kappa$ there is $\eta<\kappa$ such that
$\lambda_{\eta, \xi}^{\mathcal T}(R)=0$.

\begin{lemma}\label{multipliers-operations} Assume $\mathcal T$ is
   a  system of almost 
matrix units of size $\kappa$. The map $\Lambda^{\mathcal T}$
 from $\mathcal Q \mathcal M(\A(\mathcal T))$ into $\B(\ell_2(\kappa))$
is a norm one  linear operator such that $\Lambda^{\mathcal T}(R^*)=\Lambda^{\mathcal T}(R)^*$ for every quasi-multiplier
$R$ of $\A(\mathcal T)$.
\end{lemma}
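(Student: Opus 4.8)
The well-definedness of $\Lambda^{\mathcal T}$ as a map into $\B(\ell_2(\kappa))$ together with the contractivity bound $\|\Lambda^{\mathcal T}(R)\|\le\|R\|$ is exactly the content of Lemma \ref{rows}, so the plan is to take these for granted and establish the three remaining points: linearity, the adjoint identity, and that the norm is actually $1$. The single tool underlying all of them is uniqueness of the coefficients: since each $T_{\eta,\xi}$ is noncompact, a relation $c\,T_{\eta,\xi}=^\K c'\,T_{\eta,\xi}$ forces $c=c'$ (a nonzero scalar multiple of a noncompact operator is noncompact), so $\lambda_{\eta,\xi}^{\mathcal T}(R)$ is the \emph{unique} scalar witnessing $T_{\eta,\eta}RT_{\xi,\xi}=^\K\lambda_{\eta,\xi}^{\mathcal T}(R)\,T_{\eta,\xi}$. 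For linearity I would first recall that $\mathcal Q\mathcal M(\A(\mathcal T))$ is a linear subspace of $\B(\ell_2)$, so $R+S$ and $cR$ are again quasi-multipliers; then, for fixed $\xi,\eta<\kappa$, the map $R\mapsto T_{\eta,\eta}RT_{\xi,\xi}$ is itself linear, giving $T_{\eta,\eta}(R+S)T_{\xi,\xi}=^\K(\lambda_{\eta,\xi}^{\mathcal T}(R)+\lambda_{\eta,\xi}^{\mathcal T}(S))\,T_{\eta,\xi}$ and likewise for scalar multiples, whence uniqueness yields the entrywise identities $\lambda_{\eta,\xi}^{\mathcal T}(R+S)=\lambda_{\eta,\xi}^{\mathcal T}(R)+\lambda_{\eta,\xi}^{\mathcal T}(S)$ and $\lambda_{\eta,\xi}^{\mathcal T}(cR)=c\,\lambda_{\eta,\xi}^{\mathcal T}(R)$, i.e.\ $\Lambda^{\mathcal T}$ is linear.

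For the adjoint identity the idea is to conjugate the defining relation with the indices interchanged. Starting from $T_{\xi,\xi}RT_{\eta,\eta}=^\K\lambda_{\xi,\eta}^{\mathcal T}(R)\,T_{\xi,\eta}$ and taking adjoints (which preserves $=^\K$ since $\K(\ell_2)$ is a self-adjoint ideal), I would use $T_{\xi,\xi}^*=^\K T_{\xi,\xi}$, $T_{\eta,\eta}^*=^\K T_{\eta,\eta}$ and $T_{\xi,\eta}^*=^\K T_{\eta,\xi}$ from Definition \ref{almostunits} to obtain $T_{\eta,\eta}R^*T_{\xi,\xi}=^\K\overline{\lambda_{\xi,\eta}^{\mathcal T}(R)}\,T_{\eta,\xi}$. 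Comparing this with the defining relation for $R^*$ and invoking uniqueness of the coefficient gives $\lambda_{\eta,\xi}^{\mathcal T}(R^*)=\overline{\lambda_{\xi,\eta}^{\mathcal T}(R)}$, which says precisely that $\Lambda^{\mathcal T}(R^*)$ is the conjugate transpose $\Lambda^{\mathcal T}(R)^*$.

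Finally, for the norm, Lemma \ref{rows} already delivers $\|\Lambda^{\mathcal T}\|\le 1$, so it remains to exhibit a quasi-multiplier attaining the value $1$. I would evaluate at $R=1_{\B(\ell_2)}$, which lies in $\mathcal M(\A(\mathcal T))\subseteq\mathcal Q\mathcal M(\A(\mathcal T))$ because $\A(\mathcal T)$ is an algebra; here $T_{\eta,\eta}\,1_{\B(\ell_2)}\,T_{\xi,\xi}=T_{\eta,\eta}T_{\xi,\xi}=^\K\delta_{\eta,\xi}T_{\eta,\xi}$ by the matrix unit relation, so $\lambda_{\eta,\xi}^{\mathcal T}(1_{\B(\ell_2)})=\delta_{\eta,\xi}$ and $\Lambda^{\mathcal T}(1_{\B(\ell_2)})=I_\kappa$, which has operator norm $1=\|1_{\B(\ell_2)}\|$. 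This forces $\|\Lambda^{\mathcal T}\|=1$. There is no deep obstacle here once Lemma \ref{rows} is in place; the only points demanding care are the bookkeeping of conjugates and transposed indices in the adjoint identity, and the observation that the lower norm bound does not follow formally and needs the concrete witness $R=1_{\B(\ell_2)}$.
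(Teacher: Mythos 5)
Your proof is correct and follows essentially the same route as the paper's: linearity via uniqueness of the coefficients, the bound $\|\Lambda^{\mathcal T}\|\le 1$ from Lemma \ref{rows}, and the adjoint identity by taking adjoints of the defining relation $T_{\xi,\xi}RT_{\eta,\eta}=^\K\lambda_{\xi,\eta}^{\mathcal T}(R)T_{\xi,\eta}$. Your only addition is the explicit witness $\Lambda^{\mathcal T}(1_{\B(\ell_2)})=I_\kappa$ showing the norm is exactly one, a point the paper's proof leaves implicit; this is a harmless and correct refinement.
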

\begin{proof}
The linearity of $\Lambda^{\mathcal T}$ is  immediate.
The fact that $\|\Lambda^{\mathcal T}\|\leq 1$ follows from Lemma \ref{rows}.
For the last part, note that $T_{\eta, \eta}R^*T_{\xi, \xi}=
(T_{\xi, \xi}RT_{\eta, \eta})^*=(\lambda_{\xi, \eta}^{\mathcal T}(R)T_{\xi, \eta})^*=
\overline{\lambda_{\xi, \eta}^{\mathcal T}(R)}T_{\eta, \xi}$, and therefore  
$\lambda_{\eta, \xi}^{\mathcal T}(R^*)=\overline{\lambda_{\xi, \eta}^{\mathcal T}(R)}$.
\end{proof}

\begin{lemma}\label{compact-in-at} Suppose that $\mathcal T=\{T_{\eta, \xi}: \xi, \eta<\kappa\}
\subseteq \mathcal B(\ell_2)$
is a system of almost matrix units.  For every  $\kappa\times\kappa$ matrix 
$(\lambda_{\eta, \xi})_{\xi, \eta<\kappa}$ of a compact operator on $\ell_2(\kappa)$
there is  $R\in \A(\mathcal T)$ such that
$\lambda_{\eta, \xi}(R)=\lambda_{\eta, \xi}$ for every
$\xi, \eta<\kappa$ (see Definition \ref{multipliers-for-T}).

\end{lemma}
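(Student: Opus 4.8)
The plan is to avoid the obvious but non-convergent candidate $\sum_{\eta,\xi}\lambda_{\eta,\xi}T_{\eta,\xi}$ and instead read $R$ off from a genuine (exact) copy of $\mathcal K(\ell_2)$ inside $\A(\mathcal T)$, produced by Lemma \ref{lifting-compact}. First I would localize the support of the matrix. Since $(\lambda_{\eta,\xi})_{\xi,\eta<\kappa}$ is the matrix of a compact operator $T_M$ on $\ell_2(\kappa)$, both $T_M$ and $T_M^*$ have separable ranges, so there is a countable set $C\subseteq\kappa$ with $\lambda_{\eta,\xi}=0$ whenever $\eta\notin C$ or $\xi\notin C$: indeed $\lambda_{\eta,\xi}=\langle T_M e_\xi,e_\eta\rangle$ vanishes as soon as $e_\eta$ is orthogonal to the closed range of $T_M$ or $e_\xi$ is orthogonal to the closed range of $T_M^*$, and each of these ranges lies in the closed span of countably many basis vectors. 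The restriction $(\lambda_{\eta,\xi})_{\eta,\xi\in C}$ is then the matrix of a compact operator on $\ell_2(C)\cong\ell_2$.

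Next I would apply Lemma \ref{lifting-compact} to the countable subsystem $\{T_{\eta,\xi}:\eta,\xi\in C\}$, obtaining an exact system of matrix units $\{E_{\eta,\xi}:\eta,\xi\in C\}$ in $\B(\ell_2)$ with $E_{\eta,\xi}=^\K T_{\eta,\xi}$. Because each $E_{\eta,\xi}=T_{\eta,\xi}+(\text{compact})$ lies in $\A(\mathcal T)$ and $\A(\mathcal T)$ is norm closed, the $C^*$-subalgebra $\mathcal E\subseteq\A(\mathcal T)$ generated by the $E_{\eta,\xi}$ is, by Proposition \ref{matrixunitsiso}, $*$-isomorphic to $\mathcal K(\ell_2(C))$ with the $E_{\eta,\xi}$ corresponding to the canonical matrix units. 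Let $R\in\mathcal E$ be the image of the compact operator $(\lambda_{\eta,\xi})_{\eta,\xi\in C}$ under the inverse of this isomorphism; equivalently $R=\lim_n\sum_{\eta,\xi\in F_n}\lambda_{\eta,\xi}E_{\eta,\xi}$ along a sequence of finite approximations, the limit existing precisely because the isomorphism is isometric. This is the step where switching from the almost matrix units to the exact ones is essential, since $\sum\lambda_{\eta,\xi}T_{\eta,\xi}$ itself need not converge. In particular $R\in\A(\mathcal T)$.

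Finally I would compute $\Lambda^{\mathcal T}(R)$. As $R\in\A(\mathcal T)\subseteq\mathcal Q\mathcal M(\A(\mathcal T))$, the scalars $\lambda_{\eta,\xi}(R)$ are defined by Lemma \ref{multipliers-for-T}. For $\eta,\xi\in C$ the isomorphism gives $E_{\eta,\eta}RE_{\xi,\xi}=\lambda_{\eta,\xi}E_{\eta,\xi}$, and since $T_{\eta,\eta}=^\K E_{\eta,\eta}$ and $T_{\xi,\xi}=^\K E_{\xi,\xi}$ (multiplying a bounded $R$ on each side by compact-equivalent operators preserves $=^\K$) this yields $T_{\eta,\eta}RT_{\xi,\xi}=^\K\lambda_{\eta,\xi}T_{\eta,\xi}$, so $\lambda_{\eta,\xi}(R)=\lambda_{\eta,\xi}$. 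For $\eta\notin C$ or $\xi\notin C$, the almost matrix unit relations force $T_{\eta,\eta}E_{\eta',\xi'}T_{\xi,\xi}=^\K\delta_{\eta,\eta'}\delta_{\xi',\xi}T_{\eta,\xi}$ to be compact for every $\eta',\xi'\in C$ (the relevant Kronecker delta vanishes because $\eta'$ or $\xi'$ cannot equal an index outside $C$); by norm continuity of $X\mapsto T_{\eta,\eta}XT_{\xi,\xi}$ and closedness of $\mathcal K(\ell_2)$, the operator $T_{\eta,\eta}RT_{\xi,\xi}$ is compact, so $\lambda_{\eta,\xi}(R)=0=\lambda_{\eta,\xi}$. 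The main obstacle throughout is the convergence failure noted above: the naive lift built from the $T_{\eta,\xi}$ need not be bounded when the matrix has infinite support, and the combined use of the countability of the support (forced by compactness) and Lemma \ref{lifting-compact} is exactly what supplies an isometric model in which the construction goes through.
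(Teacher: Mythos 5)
Your proposal is correct and follows essentially the same route as the paper's proof: restrict to the countable support forced by compactness, lift to an exact system of matrix units via Lemma \ref{lifting-compact}, obtain $R$ as the norm limit of the finite block sums inside the resulting copy of $\mathcal K(\ell_2)$ (the paper does the explicit Cauchy estimate where you invoke the isometric isomorphism from Proposition \ref{matrixunitsiso}, which amounts to the same thing), and then verify the entries, including the vanishing outside the countable set. No gaps.
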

\begin{proof} Since $(\lambda_{\eta, \xi})_{\xi, \eta<\kappa}$
is  a matrix of a compact operator on $\B(\ell_2(\kappa))$ (denote this operator  by $S$),
there is a countable $A\subseteq \kappa$ such that
$\lambda_{\xi, \eta}=0$ if $(\xi, \eta)\not\in A\times A$. 
This follows from the fact  that the image of the unit ball under a compact  operator
is compact and metrizable, and hence separable which implies that the matrix of
the operator must have at most countably many nonzero rows. Now since each row of the matrix 
$(\lambda_{\eta, \xi})_{\xi, \eta<\kappa}$ belongs to $\ell_2(\kappa)$, there can be at most countably
many nonzero entries.

Apply Lemma \ref{lifting-compact} to obtain a system of matrix units $(E_{\eta, \xi})_{\xi, \eta\in A}$
in $\B(\ell_2)$ such that $E_{\eta, \xi}=^\K T_{\eta, \xi}$ for $\xi, \eta\in A$. 
Let $(F_n)_{n\in  \N}$ be
an increasing  sequence of finite sets such that $\bigcup_{n\in \N}F_n=A$.
Let $\B_n$ be the subalgebra of 
$(\Sigma_{\xi\in F_n}E_{\xi, \xi})\B(\ell_2)(\Sigma_{\xi\in F_n}E_{\xi, \xi})$ of all operators
of the form 
$$\Sigma_{\xi, \eta\in F_n}\alpha_{\eta,\xi}E_{\eta, \xi},\leqno (*)$$
where $\alpha_{\eta, \xi}\in \C$ for every $\xi, \eta\in F_n$.
From the matrix unit relations and Proposition \ref{matrixunitsiso} it 
follows  that
$\B_n$ is $*$-isomorphic
to  the algebra of $|F_n|\times |F_n|$
matrices. Therefore the norm of the operator as in $(*)$ is equal to
the matrix norm of the matrix $(\alpha_{\eta,\xi})_{\xi, \eta}$. The 
norm of this operator in $\B(\ell_2))$ is the same.

Let $S_n\in \B(\ell_2(\kappa))$ be given by $S_n=P_{F_n}SP_{F_n}$, where
$P_X$ is the orthogonal projection from $\ell_2(\kappa)$ onto $\ell_2(X)$
for $X\subseteq \kappa$. Then
since $(\lambda_{\eta,\xi})_{\xi, \eta}$ is a matrix of a compact operator $S$, the
operators $S_n$
  converge in the norm to $S$. 
Consider the operators 
$$R_n=\Sigma_{\xi, \eta\in F_n}\lambda_{\eta,\xi}E_{\eta, \xi}.$$
By the above comments about the norms of operators in $\B_n$  we conclude that 
$\|R_n-R_m\|=\|S_n-S_m\| $ for every $n,m\in \N$, and therefore
$(R_n)_{n\in \N}$ forms a Cauchy sequence in $\B(\ell_2)$ with all
elements in $\A (\mathcal T)$ and hence converges
to some operator $R\in \A (\mathcal T)$. Since for every $\xi,\eta \in A$, there is  large enough $n\in \N$ such that
$E_{\eta, \eta}R_n E_{\xi, \xi}=\lambda_{\eta, \xi}E_{\eta, \xi}$,
we conclude that $T_{\eta, \eta} R T_{\xi, \xi}=^\K E_{\eta, \eta} R E_{\xi, \xi}=\lambda_{\eta, \xi}E_{\eta, \xi}$
for all $\xi, \eta\in A$.  So $\lambda_{\eta, \xi}^{\mathcal T}(R)=\lambda_{\eta, \xi}$ for all $\xi, \eta\in A$. 
On the other hand  if $(\xi,\eta) \notin A\times A$, then 
$T_{\xi,\xi}R_n T_{\eta,\eta}\in \mathcal K (\ell_2)$ for all $n\in \N$ as 
$\{T_{\xi, \xi}: \xi \not\in A\}\cup\{E_{\xi, \xi}: \xi\in A\}$ is still almost orthogonal, and thus 
$\lambda_{\eta, \xi}^{\mathcal T}(R)= 0$.
It follows that $\Lambda^{\mathcal T}(R)=(\lambda_{\eta, \xi})_{\xi, \eta<\kappa}$ as required.
\end{proof}

The key to the proof of our main theorem is to characterize each quasi-multiplier $R$ of $\A(\mathcal T)$ based on how ``complex"  the matrix $\Lambda(R)$ is. This is captured in the following definition.

\begin{definition}\label{trivial-def} Assume that $\mathcal T=\{T_{\eta, \xi}: \xi, \eta<\kappa\}$
is a system of almost matrix units,
and $R\in \mathcal Q \mathcal M(\A(\mathcal T))$. We say
\begin{itemize}
\item $R$ is a trivial quasi-multiplier of $\A(\mathcal T)$, if $\Lambda^{\mathcal T}(R)= \lambda I_\kappa +M$, for a matrix of a compact  operator $M$ on $\B(\ell_2(\kappa))$ and $\lambda \in \mathbb C$,
\item $R$ is a $\sigma$-trivial quasi-multiplier of $\A(\mathcal T)$, if 
$\Lambda^{\mathcal T}(R)= \lambda I_\kappa +M$, for a $\kappa\times \kappa$ matrix  $M$
with at most countably many nonzero entries and some $\lambda \in \mathbb C$,
\item $R$ is a $\cc$-trivial quasi-multiplier of $\A(\mathcal T)$, if $\Lambda^{\mathcal T}(R)= \lambda I_\kappa +M$, for a $\kappa\times \kappa$ matrix  $M$
with  less than continuum
 many nonzero entries and some $\lambda \in \mathbb C$.
\end{itemize} 
\end{definition}

\begin{lemma}\label{good-matrices} Assume that $\mathcal T$ is
   a maximal system of almost 
matrix units of size $\kappa$.  
Given two quasi-multipliers $R, R'$ of $\A(\mathcal T)$, if 
 $\Lambda(R)= \Lambda(R^\prime)$, then $R=^\K  R'$. In particular,
\begin{enumerate}
\item if $\Lambda(R)$
  is a  compact $\kappa\times\kappa$-matrix, then
$R\in \mathcal A(\mathcal T)$,
\item  if $\Lambda(R)= \lambda I_\kappa$, for some $\lambda\in \mathbb C$, then
$R=^\K  \lambda 1_{\B(\ell_2)}$,
\item If $R$ is a trivial quasi-multiplier of $\A(\mathcal T)$, then 
$R\in \widetilde{\mathcal A (\mathcal T)}$.
\end{enumerate}
\end{lemma}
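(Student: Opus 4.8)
The plan is to prove the main assertion first—that $\Lambda(R)=\Lambda(R')$ forces $R=^\K R'$—and then deduce the three numbered items from it. Since $\Lambda^{\mathcal T}$ is linear by Lemma \ref{multipliers-operations} and the quasi-multipliers of $\A(\mathcal T)$ form a $*$-invariant linear subspace, it suffices to show that any quasi-multiplier $S$ with $\Lambda(S)=0$ is compact (and then apply this to $S:=R-R'$). By Lemma \ref{multipliers-for-T}, the hypothesis $\Lambda(S)=0$ says exactly that $T_{\eta,\eta}ST_{\xi,\xi}$ is compact for all $\xi,\eta<\kappa$, and I want to upgrade this to compactness of $S$ itself. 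First I would reduce to the self-adjoint case: the real part $\frac12(S+S^*)$ and imaginary part $\frac1{2i}(S-S^*)$ of $S$ are self-adjoint quasi-multipliers, each with vanishing $\Lambda$ because $\Lambda(S^*)=\Lambda(S)^*=0$ (Lemma \ref{multipliers-operations}); since $S$ is compact precisely when both of these are, it is enough to treat a \emph{self-adjoint} quasi-multiplier $S$ with $\Lambda(S)=0$.

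The core of the argument, and the only place where maximality is used, is this self-adjoint case. Suppose toward a contradiction that $S$ is noncompact, and let $\mathcal P=\{P_\xi:\xi<\kappa\}$ be the maximal almost orthogonal family of projections on which $\mathcal T$ is based, so $T_{\xi,\xi}=^\K P_\xi$. Applying Lemma \ref{maximal-noncompact} to the maximal family $\mathcal P$ and the self-adjoint noncompact operator $S$ produces $\xi,\eta<\kappa$ with $P_\eta S P_\xi$ noncompact. Replacing $P_\eta$ and $P_\xi$ by $T_{\eta,\eta}$ and $T_{\xi,\xi}$ changes this product only by compact operators—each discrepancy term carries a compact factor $P_\eta-T_{\eta,\eta}$ or $P_\xi-T_{\xi,\xi}$ and is multiplied by bounded operators—so $T_{\eta,\eta}ST_{\xi,\xi}=^\K P_\eta S P_\xi$ is noncompact, contradicting $\Lambda(S)=0$. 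Hence $S$ is compact, which establishes the main assertion. I expect this paragraph to be the main obstacle, since it is where the structural input (maximality, via Lemma \ref{maximal-noncompact}) has to be converted into the desired rigidity; the rest is formal manipulation of the linear map $\Lambda$.

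The three consequences then follow by comparing $R$ with an explicit quasi-multiplier having the same $\Lambda$. For (1), if $\Lambda(R)$ is the matrix of a compact operator, Lemma \ref{compact-in-at} furnishes $R_0\in\A(\mathcal T)$ with $\Lambda(R_0)=\Lambda(R)$; the main assertion gives $R=^\K R_0$, and since $\K(\ell_2)\subseteq\A(\mathcal T)$ we conclude $R\in\A(\mathcal T)$. For (2), a short computation with the almost matrix unit relations shows $T_{\eta,\eta}(\lambda 1_{\B(\ell_2)})T_{\xi,\xi}=^\K\lambda\delta_{\eta,\xi}T_{\eta,\xi}$, so $\Lambda(\lambda 1_{\B(\ell_2)})=\lambda I_\kappa=\Lambda(R)$ and the main assertion yields $R=^\K\lambda 1_{\B(\ell_2)}$. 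For (3), if $R$ is a trivial quasi-multiplier then $\Lambda(R)=\lambda I_\kappa+M$ with $M$ the matrix of a compact operator; by linearity $\Lambda(R-\lambda 1_{\B(\ell_2)})=M$ is a compact matrix, so part (1) gives $R-\lambda 1_{\B(\ell_2)}\in\A(\mathcal T)$ and therefore $R\in\widetilde{\A(\mathcal T)}$.
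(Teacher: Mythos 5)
Your proof is correct and follows essentially the same route as the paper: maximality enters only through Lemma \ref{maximal-noncompact}, which converts $\Lambda(S)=0$ into compactness of $S$, and the three numbered items are then formal consequences via Lemmas \ref{compact-in-at} and \ref{multipliers-operations}. Your explicit reduction to the self-adjoint case (via real and imaginary parts, using $\Lambda(S^*)=\Lambda(S)^*$) and your explicit passage from the projections $P_\xi$ to the $T_{\xi,\xi}$ are in fact slightly more careful than the paper's write-up, which applies Lemma \ref{maximal-noncompact} --- stated only for self-adjoint operators --- directly to $R-R'$ without comment.
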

\begin{proof} Suppose that $R-R'$ is not compact. Then
by Lemma \ref{maximal-noncompact}, there are $\xi, \eta<\kappa$
such that $T_{\xi, \xi}(R-R')T_{\eta, \eta}$ is noncompact, and hence
by Lemma \ref{multipliers-operations} we have that
 $\lambda_{\xi, \eta}(R-R')=\lambda_{\xi, \eta}(R)-\lambda_{\xi, \eta}(R')\not=0$.

{\it(1)}  Suppose that $R\in \mathcal Q \mathcal M(\A(\mathcal T))$ is  such that 
 $(\lambda_{\xi,\eta}(R))_{\xi, \eta<\kappa}$ is a matrix of
a compact operator on $\ell_2(\kappa)$. By Lemma \ref{compact-in-at} 
we obtain $R'\in \mathcal A(\mathcal T)$ such that
 $\lambda_{\xi, \eta}(R)=\lambda_{\xi, \eta}(R')$ for
every $\xi, \eta<\kappa$.
By the first part of the lemma we conclude that $R-R'$ is compact, and therefore
$R\in A(\mathcal T)$, since $A(\mathcal T)$ includes all compact operators.

{\it(2)} Note that
 $\lambda 1_{\B(\ell_2)}$ is clearly a quasi-multiplier of $\A(\mathcal T)$, and 
 $\Lambda(\lambda 1_{\B(\ell_2)})$
 is the matrix $\lambda I_\kappa$. Now use the first part of the lemma to conclude the
 statement.

{\it(3)} 
Suppose $\Lambda^{\mathcal T}(R)= \lambda I_\kappa +M$, for a matrix of a compact  operator $M$ and $\lambda \in \mathbb C$.
By Lemma \ref{compact-in-at} there is $R^\prime \in \mathcal A(\mathcal T)$ such that 
$\Lambda(R^\prime)= M$. Then $\Lambda(R - R^\prime)= \lambda I_\kappa$ and by (2) we have 
$R-R^\prime =^\K  \lambda I_{\B(\ell_2)}$. Therefore 
$R\in \widetilde{\mathcal A (\mathcal T)}$.
\end{proof}

\begin{lemma}\label{countably-nonzero}
Assume that $\kappa$ is a  cardinal and  $\mathcal T=\{T_{\xi, \eta}: \xi, \eta<\kappa\}$
is a maximal system of almost matrix units and  $R\in \mathcal Q \mathcal M(\A(\mathcal T))$.  If $\Lambda(R)$ has at most countably nonzero entries, then it is a  $\kappa\times \kappa$ matrix
of a compact operator.
\end{lemma}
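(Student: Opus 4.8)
The plan is to argue by contradiction. Suppose $R\in\mathcal Q\mathcal M(\mathcal A(\mathcal T))$ is such that $\Lambda^{\mathcal T}(R)$ has only countably many nonzero entries but is not the matrix of a compact operator. First I would fix a countable set $A\subseteq\kappa$ containing the supports of all nonzero rows and columns, so that $M:=\Lambda^{\mathcal T}(R)$ is supported on $A\times A$ and, by Lemma \ref{rows}, is the matrix of a bounded but (by assumption) noncompact operator on $\ell_2(A)$. Since $A$ is countable, Lemma \ref{lifting-compact} supplies a genuine system of matrix units $\{E_{\eta,\xi}:\eta,\xi\in A\}$ with $E_{\eta,\xi}=^\K T_{\eta,\xi}$; writing $P$ for the noncompact projection $\sum_{\xi\in A}E_{\xi,\xi}$ I would identify its range with $\ell_2(A)\otimes\ell_2$ so that $E_{\eta,\xi}$ becomes the matrix unit $e_{\eta,\xi}\otimes 1$. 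The defining relation from Lemma \ref{multipliers-for-T} then reads $E_{\eta,\eta}RE_{\xi,\xi}=\lambda^{\mathcal T}_{\eta,\xi}(R)E_{\eta,\xi}+S_{\eta,\xi}$ with each $S_{\eta,\xi}$ compact, which in the tensor picture says that the principal part of $PRP$ is $M\otimes 1$ while the correction has compact blocks.

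The core of the argument is to use the noncompactness of $M$ to manufacture an infinite dimensional subspace $X\subseteq\ell_2$ that is almost orthogonal to every $P_\xi$ with $\xi\in A$, yet on which $R$ is bounded below. To build $X$ I would apply Lemma \ref{compact-elementary}(3) to $M$, obtaining $\varepsilon>0$ together with pairwise disjoint finite sets $F_k\subseteq A$ and unit vectors $w_k\in span(e_\xi:\xi\in F_k)$ with $\|Mw_k\|>\varepsilon$. Lifting each $w_k$ diagonally to a unit vector $\hat w_k$ assembled from the blocks $E_{\xi,\xi}$ with $\xi\in F_k$ (placed in a deep slice $f_{n_k}$ of the multiplicity space $\ell_2$), I set $X=\overline{span}\{\hat w_k:k\in\N\}$. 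The clean point here is that, because the $F_k$ are pairwise disjoint, each $E_{\xi,\xi}$ is nonzero on at most one $\hat w_k$; hence $E_{\xi,\xi}P_X$ has rank at most one and $P_\xi P_X=^\K E_{\xi,\xi}P_X$ is compact for every $\xi\in A$. Thus $X$ is automatically almost orthogonal to all the $A$-projections, while still lying inside the range of $P$.

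Granting that $R$ is bounded below on $X$, I would finish in the style of Lemma \ref{maximal-noncompact}. Replacing $M$ by its real or imaginary part (each of which is $\Lambda^{\mathcal T}$ of the corresponding self-adjoint combination $\tfrac12(R+R^*)$ or $\tfrac1{2i}(R-R^*)$ by linearity and Lemma \ref{multipliers-operations}), I may assume the relevant symbol noncompact and self-adjoint and apply Lemma \ref{bounded-below} to control the image of $X$. Now $P_X$ is a noncompact projection, so by maximality of the almost orthogonal family underlying $\mathcal T$ (Definition \ref{maximal-system}) there is $\alpha<\kappa$ with $P_\alpha P_X$ noncompact, and by the previous paragraph this $\alpha$ must lie \emph{outside} $A$. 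Feeding this detection into the bounded-below behaviour of $R$ on $X$, exactly as in the proof of Lemma \ref{maximal-noncompact}, would force $P_\alpha R P_\beta$ to be noncompact for some $\beta$, that is $\lambda^{\mathcal T}_{\alpha,\beta}(R)\neq 0$ with $\alpha\notin A$. This contradicts the choice of $A$ as the support of $\Lambda^{\mathcal T}(R)$, so $M$ must have been compact.

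The step I expect to be the main obstacle is proving that $R$ is bounded below on $X$, i.e. that the compact corrections $S_{\eta,\xi}$ do not swallow the lower bound $\|Mw_k\|>\varepsilon$. The difficulty is that, although each $S_{\eta,\xi}$ is individually compact, the error in estimating $\|R\hat w_k\|$ is a sum over all $\eta\in A$, and so it is not controlled by making only finitely many corrections small. My plan to overcome this is to choose the slice indices $n_k$ adaptively and pass to a subsequence by a diagonal argument, using that for each fixed block $\langle S_{\eta,\xi}f_{n},f_{n}\rangle\to 0$ and that the good responses $Mw_k\otimes f_{n_k}$ sit in mutually orthogonal slices; this should render the aggregate correction negligible in the quadratic form $\|R(\sum_k c_k\hat w_k)\|^2$ and thereby secure the bound. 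A second, related subtlety to watch is converting ``$P_\alpha P_X$ noncompact with $\alpha\notin A$'' into a single nonzero entry $\lambda^{\mathcal T}_{\alpha,\beta}(R)$, since a priori a noncompact operator can arise as an infinite sum of compact blocks; this is precisely where the invertibility of $R$ on $X$ furnished by Lemma \ref{bounded-below} is needed, again in exact analogy with Lemma \ref{maximal-noncompact}.
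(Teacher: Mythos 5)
Your overall architecture (argue by contradiction, reduce to a countable support $A$, lift to genuine matrix units, build a projection $P_X$ from block vectors witnessing the noncompactness of $M$, detect with maximality) is the same as the paper's, but the final detection step has a genuine gap. You arrange that $X$ \emph{itself} is almost orthogonal to every $P_\xi$ with $\xi\in A$ (each $E_{\xi,\xi}P_X$ has rank at most one), find $\alpha\notin A$ with $P_\alpha P_X$ noncompact, and then claim that the bounded-below behaviour of $R$ on $X$ forces $P_\alpha R P_\beta$ to be noncompact for some $\beta$. This implication does not follow. From ``$P_\alpha P_X$ noncompact and $R$ bounded below on $X$'' one can only extract noncompactness of operators such as $RP_XP_\alpha$ (equivalently $P_\alpha P_X R^*$): the factor $P_X$ is trapped between $P_\alpha$ and $R$ and cannot be removed, because $P_\alpha[X]$ need not lie in $X$ and the lower bound on $R$ controls only vectors of $X$. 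In Lemma \ref{maximal-noncompact} the inner $P_X$ is removed using that $S$ \emph{commutes} with $P_X$ (supplied by Lemma \ref{bounded-below}); your $R$ has no reason to commute with your $P_X$. So neither $P_\alpha R$ nor $RP_\alpha$ is shown noncompact, no nonzero entry $\lambda_{\alpha,\beta}$ or $\lambda_{\beta,\alpha}$ with $\alpha\notin A$ is produced, and the contradiction with the choice of $A$ is not reached.

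The property actually needed --- and the one the paper arranges --- is that the \emph{images} $R(\hat w_k)$, not the vectors $\hat w_k$, escape the projections indexed by $A$: one wants $T_{\xi,\xi}RP_X$ compact for every $\xi\in A$ while $RP_X$ is noncompact. Then the maximality argument applied to the noncompact operator $RP_X$ yields $\xi_0$ with $T_{\xi_0,\xi_0}RP_X$ noncompact, forcing $\xi_0\notin A$; since $T_{\xi_0,\xi_0}R$ is then noncompact, a second application of Lemma \ref{maximal-noncompact} gives $\eta_0$ with $T_{\xi_0,\xi_0}RT_{\eta_0,\eta_0}$ noncompact, i.e.\ a nonzero entry in a row outside $A$. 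Your construction can be repaired to deliver this: since each row of $M$ lies in $\ell_2(A)$ (Lemma \ref{rows}) and the $F_k$ are disjoint, you may re-choose the $F_k$ so that $\sum_{\xi\in F_k}|\lambda_{\eta,\xi}|^2\le 2^{-k}$ for all $\eta<k$ (condition (3) in the paper's proof), which makes $\|T_{\eta,\eta}R(\hat w_k)\|$ summable in $k$ for each fixed $\eta$ and hence $T_{\eta,\eta}RP_X$ compact. The ``rank at most one'' observation about $E_{\xi,\xi}P_X$, while true, is not the relevant almost-orthogonality. Your separate concern about the aggregate compact corrections in the lower bound for $\|R(\hat w_k)\|$ is legitimate; the paper handles it by choosing the vectors $w_n$ sufficiently deep in the ranges of $T_{n,n}$, which is in the same spirit as your diagonal-subsequence plan.
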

\begin{proof} 
Assume $\Lambda(R)= (\lambda_{\eta,\xi})_{\xi,\eta<\kappa}$.
By re-enumerating the $T_{\xi,\eta}$s we may assume that
if $\lambda_{\xi, \eta}\not=0$, then $\xi, \eta<\omega$. Also by
 Lemma \ref{countable-diagonalization}
we may assume that $\{T_{\xi, \xi}: \xi<\omega\}$ are pairwise orthogonal. 

By Lemma \ref{rows}, $\Lambda(R)$ is a matrix 
of a bounded linear operator on $\ell_2(\kappa)$. 
Suppose that $\Lambda(R)$ is a matrix of a noncompact operator on $\ell_2(\kappa)$. 
Aiming at a contradiction, 
we will construct a projection $P$ such that $T_{\xi,\xi} RP$ is compact for all
$\xi<\omega$ but $RP$ is noncompact. By the argument similar to Lemma \ref{maximal-noncompact}
this will give an ordinal $\xi_0<\kappa$ such that $T_{\xi_0, \xi_0} RP$ is noncompact, which by the assumption
implies that $\omega\leq \xi_0<\kappa$. Then
 $T_{\xi_0,\xi_0} R$ is also noncompact, so again
 there is $\eta_0<\kappa$ such that $T_{\xi_0,\xi_0} RT_{\eta_0, \eta_0}$ is noncompact, which means
 that $\lambda_{\xi_0, \eta_0}\not=0$ and $(\xi_0, \eta_0)\not\in \omega\times\omega$,
contradicting the hypothesis of the lemma.

To construct $P$ we will construct its range spanned by its orthonormal basis
$(v_k: k\in \omega)$. It will be enough to choose the vectors $v_k$ in such a way that
for each $\xi<\omega$ we have that $\|T_{\xi, \xi} R(v_k)\|\leq 1/2^k$ for all $k\in [\xi,\omega)$
and $\|R(v_k)\|$ does not converge to $0$ when $k\rightarrow\infty$. Then as
$P(v_k)=v_k$ and $(v_k: k\in \N)$ can be extended to an orthonormal basis of $\ell_2$, we will
obtain from Lemma \ref{compact-elementary} (1) that $T_{\xi, \xi} R P$ is compact for every $\xi<\omega$ 
and from  Lemma \ref{compact-elementary} (2) that
$RP$ is noncompact. 

Using the fact that
$\Lambda(R)$
is a matrix of a bounded linear operator (Lemma \ref{rows})
which is not compact,  
by induction on $k\in \N$ 
(using Lemma \ref{compact-elementary} (3)),  we can construct
finite pairwise disjoint  $F_k\subseteq \mathbb N=\omega$ and 
 $(a_n)_{n\in F_k}$ such that for some 
$\varepsilon>0$,
\begin{enumerate}
\item $\Sigma_{n\in F_k} |a_n|^2\leq 1$, 
\item $\|\Lambda(R)(\Sigma_{n\in F_k}a_n \chi_{\{n\}})\|=
\sqrt{\Sigma_{\xi\in \kappa}|\Sigma_{n\in F_k}a_n \lambda_{\xi, n}|^2}>\varepsilon$,
\item $|\Sigma_{n\in F_k}a_n\lambda_{\xi, n}|^2\leq \Sigma_{n\in F_k}|\lambda_{\xi, n}|^2\leq 1/2^k$
for all $\xi<k$.
\end{enumerate}
The condition (3) follows from (1) and the fact that the rows of $\Lambda(R)$
are in $\ell_2(\kappa)$ (Lemma \ref{rows}).
Using the compactness of the operators $T_{\xi, \xi}R T_{n, n}-\lambda_{\xi,n}T_{\xi, n}$, 
we find $w_n\in Im(T_{n, n})$ of norm one such that
$\|\Sigma_{\xi<\omega} \Sigma_{n\in F_k} (T_{\xi, \xi}R T_{n, n}-\lambda_{\xi,n}T_{\xi, n}) (w_n)\|<\varepsilon / 2$. 
 Putting $v_k=\Sigma_{n\in F_k} a_n w_n$  we obtain

\begin{enumerate}
\item[(a)] $\|R(v_k)\|\geq \|\Sigma_{\xi\in \omega}\Sigma_{n\in F_k} T_{\xi, \xi}R T_{n, n}(a_n w_n)\|\geq
\sqrt{\Sigma_{\xi\in \kappa}|\Sigma_{n\in F_k}a_n \lambda_{\xi, n}|^2} - \varepsilon/2>\varepsilon/2$,
\item[(b)] $\|T_{\xi, \xi} R(v_k)\|= \|\Sigma_{n\in F_k} T_{\xi, \xi} RT_{n,n} (v_k)\|\leq 1/2^k$,
for all $\xi<k$.
\end{enumerate}
As noted before this is sufficient to obtain a contradiction from the conjunction of the hypothesis 
that $\Lambda(R)$ is a matrix of a noncompact operator and
the set of its nonzero entries is countable.
\end{proof}

It was noted by the referee that the proof of above lemma can be simplified using the countable 
degree-1 saturation of the Calkin algebra (see \cite{saturation}).

\begin{corollary}\label{sigma-trivial-trivial}
Assume that $\kappa$ is a cardinal and  $\mathcal T=\{T_{\xi, \eta}: \xi, \eta<\kappa\}$
is a maximal system of almost matrix units and  $R\in \mathcal Q \mathcal M(\A(\mathcal T))$.
If $R$ is a $\sigma$-trivial quasi-multiplier of $\A(\mathcal T)$, then $R$ is a trivial quasi-multiplier of $\A(\mathcal T)$.
\end{corollary}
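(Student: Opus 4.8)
The plan is to reduce the Corollary directly to Lemma \ref{countably-nonzero}. By the definition of a $\sigma$-trivial quasi-multiplier, we have $\Lambda^{\mathcal T}(R) = \lambda I_\kappa + M$, where $M$ is a $\kappa\times\kappa$ matrix with at most countably many nonzero entries and $\lambda\in\C$. The goal is to show that $M$ is in fact the matrix of a compact operator, since then $R$ will be trivial by definition.

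First I would show that $M$ itself arises as $\Lambda^{\mathcal T}(R')$ for a suitable quasi-multiplier $R'$. Consider $R' = R - \lambda 1_{\B(\ell_2)}$. Since $\lambda 1_{\B(\ell_2)}$ is a quasi-multiplier of $\A(\mathcal T)$ and $\mathcal Q\mathcal M(\A(\mathcal T))$ is a linear subspace, $R'$ is also a quasi-multiplier. By the linearity of $\Lambda^{\mathcal T}$ (Lemma \ref{multipliers-operations}) and the computation $\Lambda^{\mathcal T}(\lambda 1_{\B(\ell_2)}) = \lambda I_\kappa$ from the proof of Lemma \ref{good-matrices}(2), we get $\Lambda^{\mathcal T}(R') = \Lambda^{\mathcal T}(R) - \lambda I_\kappa = M$.

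Now $R'$ is a quasi-multiplier whose associated matrix $\Lambda^{\mathcal T}(R') = M$ has at most countably many nonzero entries. Since $\mathcal T$ is a maximal system of almost matrix units, Lemma \ref{countably-nonzero} applies directly and tells us that $M$ is the matrix of a compact operator on $\ell_2(\kappa)$. Therefore $\Lambda^{\mathcal T}(R) = \lambda I_\kappa + M$ with $M$ a compact matrix, which is precisely the definition of $R$ being a trivial quasi-multiplier of $\A(\mathcal T)$.

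The argument is essentially bookkeeping once the heavy lifting of Lemma \ref{countably-nonzero} is in hand; there is no genuine obstacle here, since all the analytic work — constructing the projection $P$ witnessing noncompactness and deriving the contradiction — was already carried out in that lemma. The only point requiring a moment's care is confirming that subtracting $\lambda 1_{\B(\ell_2)}$ preserves the quasi-multiplier property and shifts the matrix by exactly $\lambda I_\kappa$, both of which follow immediately from the linear structure of $\mathcal Q\mathcal M(\A(\mathcal T))$ and the behavior of $\Lambda^{\mathcal T}$ recorded earlier.
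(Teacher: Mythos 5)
Your proof is correct and follows essentially the same route as the paper's: subtract $\lambda 1_{\B(\ell_2)}$, observe that $\Lambda^{\mathcal T}(R-\lambda 1_{\B(\ell_2)})=M$ has only countably many nonzero entries, and invoke Lemma \ref{countably-nonzero} to conclude that $M$ is a matrix of a compact operator. The extra bookkeeping you include (that $R-\lambda 1_{\B(\ell_2)}$ remains a quasi-multiplier and that $\Lambda^{\mathcal T}$ is linear) is exactly what the paper leaves implicit.
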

\begin{proof} Assume that $\Lambda^{\mathcal T}(R)$ is of the form $\lambda I_\kappa+M$
where $\lambda\in \C$ and $M$ is a matrix with countably many nonzero entries. Therefore
$\Lambda^{\mathcal T}(R-\lambda 1_{\B(\ell_2)})$ is $M$, which has at most countably many nonzero entries, which
 by Lemma \ref{countably-nonzero} means that $M$ is a matrix of a compact operator and
so $R$ is a trivial quasi-multiplier of $\A(\mathcal T)$.
\end{proof}

\section{The Cantor tree system of almost matrix units}
Let $2^{<\mathbb N}$ be the set of all  maps  
 $s:\{0, ..., n\}\rightarrow \{0,1\}$ for $n\in \N$ or $s=\emptyset$ and
  by $2^\mathbb N$  denote
the Cantor space, the space of all maps $\xi: \N\rightarrow \{0,1\}$,
 equipped with the  product topology.
For each $\xi\in 2^{\mathbb N}$ we can associate a set 
$$A_\xi = \{s\in 2^{<\mathbb N} : s\subseteq \xi\},$$
which is usually called the  ``branch through $\xi$". It is easy to see that 
$\{A_\xi :\xi \in 2^\mathbb N\}$ is an almost disjoint family of subsets of $2^{<\mathbb N}$
of size continuum.
 In this section  $\mathcal H$ denotes the  separable 
Hilbert space $ \ell_2 (2^{<\mathbb N})$.
 For each $\xi\in 2^{\mathbb N}$ define a projection $T_{\xi, \xi} \in \mathcal B(\mathcal H)$  by
$$
T_{\xi, \xi} (x) (s)= \begin{cases} x(s)   &\hbox{if } s\in A_\xi,    \\
0   & \hbox{otherwise,}
\end{cases}
$$ 
for each $x \in \mathcal H$ and $s\in 2^{<\mathbb N}$.
Then $\mathcal P_{2^\mathbb N}= \{T_{\xi,\xi} : \xi\in 2^\mathbb N\}$ is a family of almost orthogonal projections
in $\mathcal B (\mathcal H)$.  

Let $\{e_s : s\in 2^{<\N}\}$ be the canonical orthonormal basis for $\mathcal H$, i.e., 
$e_{s}(t)= 1$ if $t=s$ and $e_{s}(t)= 0$, otherwise.
For every $\xi, \eta\in 2^\N$, define a linear 
bounded operator $T_{\eta,\xi}:\mathcal H \rightarrow \mathcal H$ by
$T_{\eta,\xi}(e_{\xi|k})=e_{\eta|k}$ for every $k\in \N$ and $T_{\eta,\xi}(e_{t})=0$
if $t$ is not equal to ${\xi|k}$ for any $k\in \N$.
It is easy to see that $\mathcal T_{2^\N} = \{T_{\eta,\xi}: \xi,\eta \in 2^\mathbb N\}$ is a system of almost matrix
units (Definition \ref{almostunits}) based on $\mathcal P_{2^\N}$ (Definition \ref{maximal-system}). We will call this system of almost matrix units
``the Cantor tree system of matrix units".
 In the rest of this section the operators
$T_{\eta, \xi}$ will always refer to the members of $\mathcal T_{2^\N}$.

Recall that a family $\mathcal F$ of subsets of a Polish space 
(a separable completely metrizable  space)  is said to have the \emph{perfect  set property}, if every
uncountable element of $\mathcal F$
 has a perfect subset. In particular every uncountable
element of $\mathcal F$  must have cardinality continuum. In the following lemma
we use the fact that the family of Borel sets
 has the perfect set property (e.g., 13.6 of \cite{kechris}).

\begin{lemma}\label{analytic-set}
Assume $R\in \mathcal B (\mathcal H )$ is a quasi-multiplier of $\A(\mathcal T_{2^\N})$
 and  $U$ is a Borel subset of $\mathbb C$, then the set 
$$
B_U^R = \{(\eta, \xi)\in 2^\mathbb N\times 2^\mathbb N : \lambda_{\eta, \xi}^{\mathcal T_{2^\N}}(R) \in U \}
$$
is Borel in $2^\N\times 2^\N$. In particular,  $B^R_U$ is either countable
or of size continuum.
\end{lemma}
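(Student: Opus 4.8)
The plan is to exhibit the map $(\eta,\xi)\mapsto \lambda_{\eta,\xi}^{\mathcal T_{2^\N}}(R)$ as a Borel function on the Polish space $2^\N\times 2^\N$. Once this is done, $B_U^R$ is simply the preimage of the Borel set $U$ under a Borel map, hence Borel, and the final dichotomy is an immediate application of the cited perfect set property of Borel sets.

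The key step is to read off $\lambda_{\eta,\xi}(R)$ as a limit of matrix coefficients of $R$ against the canonical basis. Recall that $T_{\xi,\xi}$ is the orthogonal projection onto $\overline{\text{span}}\{e_{\xi|k}:k\in\N\}$, that $T_{\eta,\xi}(e_{\xi|k})=e_{\eta|k}$, and write $S_{\eta,\xi}=T_{\eta,\eta}RT_{\xi,\xi}-\lambda_{\eta,\xi}(R)T_{\eta,\xi}$ for the compact error guaranteed by Lemma \ref{multipliers-for-T} (this is where the hypothesis $R\in\mathcal Q\mathcal M(\A(\mathcal T_{2^\N}))$ is used). Since $e_{\xi|k}$ lies in the range of $T_{\xi,\xi}$ and $e_{\eta|k}$ in the range of the self-adjoint projection $T_{\eta,\eta}$, I compute $\langle Re_{\xi|k},e_{\eta|k}\rangle=\langle T_{\eta,\eta}RT_{\xi,\xi}e_{\xi|k},e_{\eta|k}\rangle=\lambda_{\eta,\xi}(R)+\langle S_{\eta,\xi}e_{\xi|k},e_{\eta|k}\rangle$. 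The vectors $e_{\xi|k}$ are norm-one vectors supported on the pairwise disjoint singletons $\{\xi|k\}$, so by Lemma \ref{compact-elementary}(2) applied to the compact operator $S_{\eta,\xi}$ we get $\|S_{\eta,\xi}(e_{\xi|k})\|\to 0$, whence the error term vanishes and
\[
\lambda_{\eta,\xi}(R)=\lim_{k\to\infty}\langle Re_{\xi|k},e_{\eta|k}\rangle.
\]

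Next I would note that for each fixed $k$ the function $(\eta,\xi)\mapsto\langle Re_{\xi|k},e_{\eta|k}\rangle$ depends only on the finite restrictions $\xi|k$ and $\eta|k$, hence is constant on each clopen set determined by fixing the first $k$ coordinates of $\eta$ and $\xi$; in particular it is locally constant and therefore continuous on $2^\N\times 2^\N$. A pointwise limit of continuous complex-valued functions is Borel (of Baire class at most one), so $(\eta,\xi)\mapsto\lambda_{\eta,\xi}(R)$ is Borel, and thus $B_U^R$ is Borel. Finally, since $2^\N\times 2^\N$ is Polish and $B_U^R$ is Borel, the perfect set property (13.6 of \cite{kechris}) shows that $B_U^R$ is either countable or contains a perfect set, and every perfect subset of a Polish space has cardinality $\cc$; this gives the stated trichotomy-free dichotomy.

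I expect the only genuine work to be the coefficient formula in the second paragraph: one must verify carefully that the compact errors $S_{\eta,\xi}$ are annihilated uniformly along the orthonormal sequence $(e_{\xi|k})_{k}$, which is exactly the role played by Lemma \ref{compact-elementary}(2). After that, the passage ``continuous for each $k$ $\Rightarrow$ Borel in the limit $\Rightarrow$ $B_U^R$ Borel'' and the invocation of the perfect set property are entirely routine.
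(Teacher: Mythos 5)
Your proposal is correct and follows essentially the same route as the paper: express $\lambda_{\eta,\xi}(R)$ as the pointwise limit of the locally constant (hence continuous) matrix-coefficient functions $(\eta,\xi)\mapsto\langle Re_{\xi|k},e_{\eta|k}\rangle$, using Lemma \ref{compact-elementary}(2) on the compact error, conclude the map is Borel, and invoke the perfect set property. The only difference is cosmetic (your index convention in the inner product is in fact cleaner than the paper's, which contains a small typographical swap in the definition of $\psi_n$).
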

\begin{proof}

Let $\lambda_{\eta, \xi} = \lambda_{\eta, \xi}^{\mathcal T_{2^\N}}(R) $ for every $\xi,\eta\in 2^\N$.

\noindent{\bf Claim.} $\psi_{n}: 2^{\N}\times 2^{\N}\rightarrow \C$ 
defined by 
$$\psi_{n}(\eta, \xi)=\left\langle  R(e_{\eta|_n}), e_{\xi|_n} \right\rangle$$
is a continuous function, for every $n\in{\N}$.

\noindent{\it Proof of the Claim.} Fix $n \in \N$. For $s,t\in 2^n$
let $O_{s,t} $ denote the clopen  set $ \{(\eta, \xi)\in 2^\N \times 2^\N : s\subseteq \xi \
\& \  t\subseteq \eta\}$.
 Note that $\psi_n$ is constant on $O_{s,t}$, for every $(s,t)\in 2^n \times 2^n$.  
In fact, $\psi_n (\eta, \xi)= \langle R (e_s), e_t \rangle$ for every $(\eta, \xi) \in O_{s,t}$.
Since $2^\N \times 2^\N= \bigcup_{s,t\in 2^n} O_{s,t}$, the range of $\psi_n$ is finite, and it is
continuous, which completes the proof of the claim. 

For each $\xi, \eta\in 2^\N$, since 
$W_{\eta, \xi}= \lambda_{\eta, \xi} T_{\eta, \xi} - T_{\eta, \eta} R T_{\xi, \xi}$ 
is a compact operator in $\B(\HH)$,
we have $\lim_{n\rightarrow \infty} \|W_{\eta, \xi}(e_{\xi|_n})\|= 0$ (see Lemma \ref{compact-elementary} (2)), 
and therefore $$\lim_{n\rightarrow\infty} |\langle W_{\eta, \xi}(e_{\xi|_n}), e_{\eta|_n}\rangle| = 0.$$ 
This means  that
$$ \left\langle T_{\eta, \eta} RT_{\xi, \xi}(e_{\xi|_n}), e_{\eta|_n}\right\rangle \rightarrow
 \left\langle \lambda_{\eta, \xi} T_{\eta, \xi}(e_{\xi|_n}), e_{\eta|_n} \right\rangle = 
 \lambda_{\eta, \xi}\left\langle e_{\eta|_n} , e_{\eta|_n} \right\rangle = \lambda_{\eta, \xi}. $$

Thus, for each $\xi, \eta\in 2^\N$
$$\psi_n (\eta, \xi)=\left\langle  R(e_{\eta|_n}), e_{\xi|_n} \right\rangle= 
  \left\langle T_{\eta, \eta} RT_{\xi, \xi} (e_{\xi|_n}), e_{\eta|_n}\right\rangle,
 $$
  converges to $\lambda_{\eta, \xi}$. So the map
 $\psi: 2^\N \times2^\N \rightarrow \mathbb C$ given by $\psi(\eta, \xi)=\lambda_{\eta, \xi}$
is the pointwise limit of continuous functions $\psi_n$ for $n\in \N$, hence it is Borel
(Ex. 11.2 (i)  \cite{kechris}), which means
  $B_U^R = \psi^{-1}[U]$  is Borel.

\end{proof}

\begin{corollary}\label{c-trivial-sigma-trivial} If $R$ is a $\cc$-trivial quasi-multiplier of $\A(\mathcal T_{2^\N})$, then
$R$ is a $\sigma$-trivial quasi-multiplier of $\A(\mathcal T_{2^\N})$.
\end{corollary}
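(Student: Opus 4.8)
The plan is to unwind the definitions and apply the previous results directly. Recall that $R$ being a $\cc$-trivial quasi-multiplier means $\Lambda^{\mathcal T_{2^\N}}(R) = \lambda I_{\cc} + M$ for some $\lambda \in \C$ and some matrix $M$ with strictly fewer than $\cc$ many nonzero entries, while $R$ being $\sigma$-trivial requires the analogous decomposition with $M$ having only countably many nonzero entries. Since the index set here is $2^\N \times 2^\N$ (of size exactly $\cc$), the task reduces to showing that the set of nonzero entries of $M$, namely
$$
Z = \{(\eta, \xi) \in 2^\N \times 2^\N : \lambda_{\eta, \xi}^{\mathcal T_{2^\N}}(R) \neq \lambda \delta_{\eta, \xi}\},
$$
is in fact countable once we know it has size less than $\cc$.

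First I would consider the operator $R' = R - \lambda 1_{\B(\ell_2)}$, which is again a quasi-multiplier of $\A(\mathcal T_{2^\N})$ by Lemma \ref{multipliers-operations}, and whose associated matrix is $\Lambda(R') = M$. The nonzero entries of $M$ are precisely the pairs $(\eta, \xi)$ with $\lambda_{\eta, \xi}^{\mathcal T_{2^\N}}(R') \neq 0$, so it suffices to control this set. The key step is to invoke Lemma \ref{analytic-set} with the Borel set $U = \C \setminus \{0\}$: this yields that
$$
B_{\C \setminus \{0\}}^{R'} = \{(\eta, \xi) \in 2^\N \times 2^\N : \lambda_{\eta, \xi}^{\mathcal T_{2^\N}}(R') \neq 0\}
$$
is a Borel subset of $2^\N \times 2^\N$, and hence, by the perfect set property for Borel sets invoked in that lemma, is either countable or of size continuum.

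By the hypothesis of $\cc$-triviality, this set has size strictly less than $\cc$, so the dichotomy forces it to be countable. Therefore $M$ has at most countably many nonzero entries, which is exactly the statement that $R$ is a $\sigma$-trivial quasi-multiplier of $\A(\mathcal T_{2^\N})$.

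I do not expect any genuine obstacle here: the corollary is essentially a direct reading of the dichotomy in Lemma \ref{analytic-set}, and the only care needed is the bookkeeping of subtracting off $\lambda 1_{\B(\ell_2)}$ so that the set of ``nontrivial'' entries becomes literally the preimage of $\C \setminus \{0\}$ under the Borel map $\psi$ from that lemma. The one point worth stating explicitly is that the ambient index set $2^\N \times 2^\N$ has cardinality exactly $\cc$, so that ``fewer than $\cc$ nonzero entries'' is incompatible with the ``size continuum'' alternative of the dichotomy, leaving countability as the only possibility.
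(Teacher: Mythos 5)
Your argument is correct and follows the paper's own proof essentially verbatim: subtract $\lambda 1_{\B(\ell_2)}$ so that $\Lambda$ of the difference is $M$, apply Lemma \ref{analytic-set} with $U=\C\setminus\{0\}$ to see the set of nonzero entries is Borel, and use the perfect set property dichotomy to conclude countability from the cardinality bound. No differences worth noting.
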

\begin{proof} Suppose that $\Lambda^{\mathcal T_{2^\N}}(R)=\lambda I_{2^\N}+M$ where $\lambda\in \C$
and  $M$ has 
less then continuum nonzero entries. 
Note that $\Lambda^{\mathcal T_{2^\N}}(R-\lambda 1_{\B(\HH)})=
\lambda I_{2^\N}+M-\lambda I_{2^\N}=M$ by Lemma \ref{multipliers-operations}.
So 
$B^{R-\lambda 1_{\B(\HH)}}_{\C\setminus\{0\}}$ is  
a Borel subset of 
$2^{\N}\times 2^\N$ (Lemma  \ref{analytic-set}), and
of cardinality less then $\cc$.  Therefore the perfect set property for Borel sets,
implies that $B^{R-\lambda 1_{\B(\HH)}}_{\C\setminus\{0\}}$ is countable, 
 so $M$ has at most countably many
nonzero entries, which means that $R$ is $\sigma$-trivial.
\end{proof}

\section{Pairing systems of almost matrix units}
In this section we introduce a method of eliminating nontrivial quasi-multipliers of $\mathcal A(\mathcal T)$
by pairing the elements of $\mathcal T$ into a new system of almost matrix units. 
\begin{definition}\label{pairing-def} Let $X$ be a set which is partitioned into two
subsets of the same cardinality, $X=Y\cup (X\setminus Y)$ 
and suppose that $\rho: Y\rightarrow (X\setminus Y)$ is a bijection.
Suppose that $\mathcal T=\{T_{\eta, \xi}: \xi, \eta\in X\}$ is a
system of almost matrix units in $\B(\ell_2)$. We say 
$\mathcal U=\{U_{\eta, \xi}: \xi, \eta\in Y\}$ is a pairing of $\mathcal T$ along $\rho$
if and only if for every $\xi, \eta\in Y$ the following holds:
$$U_{\eta, \xi}=^\K T_{\eta, \xi}+T_{\rho(\eta), \rho(\xi)}.$$

\end{definition}

\begin{proposition}\label{pairing-existence} Let $X$, $Y$, $\rho$ and $\mathcal T$ be as above.
 Then any pairing $\mathcal U$
of $\mathcal T$ along $\rho$ is a system of almost matrix units. If $\mathcal T$ is maximal, then $\mathcal U$ 
  is also  maximal.

\end{proposition}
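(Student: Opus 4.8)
The plan is to verify directly that the family $\mathcal U=\{U_{\eta,\xi}:\xi,\eta\in Y\}$ satisfies the two almost matrix unit relations of Definition \ref{almostunits}, and then to handle the maximality claim separately. For the first relation, using $U_{\eta,\xi}=^{\K}T_{\eta,\xi}+T_{\rho(\eta),\rho(\xi)}$ and the fact that $=^{\K}$ is preserved under taking adjoints together with $T_{\eta,\xi}^*=^{\K}T_{\xi,\eta}$, I would compute
\[
U_{\eta,\xi}^*=^{\K}T_{\eta,\xi}^*+T_{\rho(\eta),\rho(\xi)}^*=^{\K}T_{\xi,\eta}+T_{\rho(\xi),\rho(\eta)}=^{\K}U_{\xi,\eta},
\]
which is exactly relation (1). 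For the multiplicative relation (2), given $\alpha,\beta,\xi,\eta\in Y$ I would expand the product
\[
U_{\beta,\alpha}U_{\eta,\xi}=^{\K}\bigl(T_{\beta,\alpha}+T_{\rho(\beta),\rho(\alpha)}\bigr)\bigl(T_{\eta,\xi}+T_{\rho(\eta),\rho(\xi)}\bigr)
\]
into four terms and apply relation (2) of $\mathcal T$ to each. The cross terms $T_{\beta,\alpha}T_{\rho(\eta),\rho(\xi)}$ and $T_{\rho(\beta),\rho(\alpha)}T_{\eta,\xi}$ vanish modulo $\K$ because, $\rho$ being a bijection $Y\to X\setminus Y$, the inner indices $\alpha$ and $\rho(\eta)$ (resp.\ $\rho(\alpha)$ and $\eta$) always lie in disjoint halves of $X$ and so can never coincide; thus $\delta_{\alpha,\rho(\eta)}=\delta_{\rho(\alpha),\eta}=0$. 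The two surviving diagonal terms give $\delta_{\alpha,\eta}T_{\beta,\xi}+\delta_{\rho(\alpha),\rho(\eta)}T_{\rho(\beta),\rho(\xi)}$, and since $\rho$ is injective we have $\delta_{\rho(\alpha),\rho(\eta)}=\delta_{\alpha,\eta}$, so this equals $\delta_{\alpha,\eta}\bigl(T_{\beta,\xi}+T_{\rho(\beta),\rho(\xi)}\bigr)=^{\K}\delta_{\alpha,\eta}U_{\beta,\xi}$, as required. I should also check that each $U_{\eta,\xi}$ is noncompact, which follows since $T_{\eta,\xi}$ and $T_{\rho(\eta),\rho(\xi)}$ have essentially orthogonal ranges and domains (their diagonal projections $T_{\eta,\eta}$ and $T_{\rho(\eta),\rho(\eta)}$ are almost orthogonal), so their sum cannot be compact.

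For the maximality claim, suppose $\mathcal T$ is maximal, i.e.\ based on a maximal family of almost orthogonal projections. By Definition \ref{maximal-system} it suffices to show that the diagonal projections $\{U_{\xi,\xi}:\xi\in Y\}$ form a maximal almost orthogonal family (up to the compact perturbation needed to pass from $U_{\xi,\xi}$ to an actual projection via Lemma \ref{based-on-projections}). Here $U_{\xi,\xi}=^{\K}T_{\xi,\xi}+T_{\rho(\xi),\rho(\xi)}=^{\K}P_\xi+P_{\rho(\xi)}$ where $\{P_\xi:\xi\in X\}$ is the maximal almost orthogonal family underlying $\mathcal T$. So I must show that merging each pair $\{P_\xi,P_{\rho(\xi)}\}$ into a single projection $P_\xi+P_{\rho(\xi)}$ preserves maximality. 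Given any noncompact projection $Q$, maximality of $\{P_\zeta:\zeta\in X\}$ gives some $\zeta\in X$ with $QP_\zeta$ noncompact; writing $\zeta$ as either some $\xi\in Y$ or some $\rho(\xi)$, the operator $Q(P_\xi+P_{\rho(\xi)})=^{\K}QU_{\xi,\xi}$ dominates $QP_\zeta$ modulo $\K$ and is therefore noncompact, giving the required witness.

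The step I expect to require the most care is \emph{not} the algebraic verification of the matrix unit relations—that is a routine index bookkeeping once one notes that the disjointness $Y\cap(X\setminus Y)=\emptyset$ kills the cross terms—but rather the maximality argument, where one must be careful that merging two almost orthogonal projections into their sum does not create a new noncompact projection almost orthogonal to the whole merged family. The key point is that $\{P_\xi+P_{\rho(\xi)}:\xi\in Y\}$ and $\{P_\zeta:\zeta\in X\}$ generate the same ``coverage'' in the Calkin algebra: any range a single $P_\zeta$ can detect is still detected by the corresponding merged projection, so no maximality is lost. Making this precise amounts to the observation that $P_\zeta\leq^{\K}P_\xi+P_{\rho(\xi)}$ for the appropriate $\xi$, which immediately transfers noncompactness of products.
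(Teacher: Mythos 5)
Your proposal is correct and follows essentially the same route as the paper: the same expansion into four terms with the cross terms killed by $Y\cap\rho[Y]=\emptyset$, and the same maximality argument via $P_\zeta$-noncompactness transferring to $U_{\xi,\xi}$ (the paper just realizes the projections $Q_\xi=^{\K}U_{\xi,\xi}$ concretely by first making each pair $P_\xi,P_{\rho(\xi)}$ genuinely orthogonal via Lemma \ref{countable-diagonalization}, whereas you invoke Lemma \ref{based-on-projections}). Your explicit check that each $U_{\eta,\xi}$ is noncompact is a detail the paper leaves implicit but which the definition does require.
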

\begin{proof} 
Let $\mathcal U=\{U_{\eta, \xi}: \xi, \eta\in Y\}$ be a pairing of $\mathcal T$ along $\rho$. Then 
for every $\xi, \eta\in Y$ we have
$$U_{\eta, \xi}=^\K T_{\eta, \xi}+T_{\rho(\eta), \rho(\xi)}.$$ 

We check that $\mathcal U =\{U_{\eta, \xi}: \xi, \eta\in Y\}$ is a system of almost
matrix units:  $(U_{\eta,\xi})^*=^\K (T_{\eta, \xi}+T_{\rho(\eta), \rho(\xi)})^*
=^\K T_{\xi, \eta}+T_{\rho(\xi), \rho(\eta)}=^\K U_{\xi, \eta}$ for all $\xi, \eta\in Y$.
For all $\alpha, \beta, \xi, \eta\in Y$, since $Y\cap \rho[Y]=\emptyset$ and
 $\rho$ is a bijection, a straightforward calculation show that
\begin{align*}
U_{\beta, \alpha}U_{\eta, \xi}&=^\K  \delta_{\alpha,\eta}U_{\beta, \xi}.
\end{align*}

Now suppose that
 $\mathcal T=\{T_{\eta, \xi}: \xi, \eta\in X\}$ 
 is a maximal system of almost matrix units, that is,
there is a maximal family $\{P_\xi: \xi\in X\}$ of almost orthogonal 
projections (see Definition \ref{maximal-system}) such that $T_{\xi, \xi}=^\K P_\xi$
for each $\xi\in X$.  We will show that
$\mathcal U$ is also a maximal system of almost matrix units. We need to produce 
a maximal family $\mathcal Q= \{Q_\xi: \xi \in Y\}$   of almost orthogonal projections, such that $\mathcal U$ is based on $\mathcal Q$.

Using Lemma \ref{countable-diagonalization} for 
each pair $s=\{\xi, \rho(\xi)\}$, separately for every $\xi\in Y$ find
orthogonal projections $P_\xi^s, P_{\rho(\xi)}^s\in \B(\ell_2)$ such that 
$P^s_\xi=^\K P_\xi$ and $P_{\rho(\xi)}^s=^\K P_{\rho(\xi)}$
and $P_\xi^sP_{\rho(\xi)}^s=0$. 
For each $\xi\in Y$ define $Q_\xi=P_\xi^s+P_{\rho(\xi)}^s$ for $s=\{\xi, \rho(\xi)\}$,
which is a projection as it is the sum of two orthogonal projections and moreover
$$Q_\xi=^\K T_{\xi,\xi}+T_{\rho(\xi), \rho(\xi)}=^\K U_{\xi, \xi}.\leqno (*)$$
It remains to prove that $\mathcal Q$ is a maximal family of
almost orthogonal projections. 
Suppose that $P$ is a projection in $\mathcal B (\mathcal \ell_2)$.  By the maximality of 
$\{P_\xi: \xi\in X\}$,
there is $\alpha\in X$ such that $P_\alpha P$ is not a compact operator. 
Let $\xi\in Y$ be such that $\alpha\in \{\xi, \rho(\xi)\}$, so we have 
$$T_{\alpha, \alpha}U_{\xi, \xi}=^\K T_{\alpha, \alpha}(T_{\xi, \xi}+T_{\rho(\xi), \rho(\xi)})=^\K T_{\alpha, \alpha},$$
by Definition  \ref{almostunits} (2), as the domain and the range of $\rho$ are disjoint.
Therefore $T_{\alpha, \alpha} U_{\xi, \xi} 
P =^\K  T_{\alpha, \alpha} P=^\K P_\alpha P$. Thus $T_{\alpha, \alpha} U_{\xi, \xi} P$ and consequently  by $(*)$
$U_{\xi, \xi}P=^\K Q_\xi P$ are noncompact, which shows that $\mathcal U$ is maximal as well.
\end{proof}

\begin{lemma}\label{pairing} Suppose $X, Y$ and $\rho$ are as in Definition \ref{pairing-def}.
Let  $\mathcal T=\{T_{\eta, \xi}: \xi, \eta\in X\}$ be a
system of almost matrix units and $\mathcal U$ be a pairing
of $\mathcal T$ along $\rho$. Suppose that $R\in \B(\ell_2)$ is a quasi-multiplier
for $\A(\mathcal U)$. Then $R$ is a quasi-multiplier of $\A(\mathcal T)$ and
$$\lambda_{\eta, \xi}^{\mathcal U}(R)=\lambda_{\eta, \xi}^{\mathcal T}(R)
=\lambda_{\rho(\eta), \rho(\xi)}^{\mathcal T}(R),$$
$$\lambda_{\eta, \rho(\xi)}^{\mathcal T}(R)=\lambda_{\rho(\eta), \xi}^{\mathcal T}(R)=0$$
for each $\xi, \eta\in Y$.
\end{lemma}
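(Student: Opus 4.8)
The plan is to verify the corner condition of Lemma \ref{multipliers-for-T} for $\mathcal{T}$ directly, and to read off all the stated identities from a single computation. Since $X = Y \cup \rho[Y]$ is a disjoint union, every $\alpha \in X$ belongs to a unique pair $\{\eta,\rho(\eta)\}$ with $\eta\in Y$; I will write $\eta(\alpha)$ for this $\eta$, so $\eta(\alpha)=\alpha$ if $\alpha\in Y$ and $\eta(\alpha)=\rho^{-1}(\alpha)$ if $\alpha\in\rho[Y]$. Because $R$ is a quasi-multiplier of $\A(\mathcal{U})$, Lemma \ref{multipliers-for-T} applied to $\mathcal{U}$ gives, for all $\eta,\xi\in Y$,
$$U_{\eta,\eta}\, R\, U_{\xi,\xi}=^\K \lambda_{\eta,\xi}^{\mathcal{U}}(R)\,U_{\eta,\xi}.$$
The goal is then to show that for every $\alpha,\beta\in X$ the operator $T_{\alpha,\alpha}RT_{\beta,\beta}$ is a compact perturbation of a scalar multiple of $T_{\alpha,\beta}$, and to identify the scalar; by Lemma \ref{multipliers-for-T} this simultaneously proves $R\in\mathcal Q\mathcal M(\A(\mathcal{T}))$ and computes $\lambda^{\mathcal{T}}_{\alpha,\beta}(R)$.

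The first step is a \emph{corner selection} identity: for $\alpha\in X$ and $\eta=\eta(\alpha)$ one has $T_{\alpha,\alpha}U_{\eta,\eta}=^\K T_{\alpha,\alpha}$, and symmetrically $U_{\xi,\xi}T_{\beta,\beta}=^\K T_{\beta,\beta}$ for $\xi=\eta(\beta)$. This follows from $U_{\eta,\eta}=^\K T_{\eta,\eta}+T_{\rho(\eta),\rho(\eta)}$ together with the almost matrix unit relations of Definition \ref{almostunits}, since exactly one of $\delta_{\alpha,\eta}$, $\delta_{\alpha,\rho(\eta)}$ equals $1$ (this is precisely where the disjointness of $Y$ and $\rho[Y]$ is used). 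Multiplying the displayed $\mathcal{U}$-relation on the left by $T_{\alpha,\alpha}$ and on the right by $T_{\beta,\beta}$, and using that a compact operator times a bounded operator is compact, I obtain
$$T_{\alpha,\alpha}\,R\,T_{\beta,\beta}=^\K T_{\alpha,\alpha}\,U_{\eta,\eta}R U_{\xi,\xi}\,T_{\beta,\beta}=^\K \lambda_{\eta,\xi}^{\mathcal{U}}(R)\,T_{\alpha,\alpha}U_{\eta,\xi}T_{\beta,\beta}.$$

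Next I would expand $U_{\eta,\xi}=^\K T_{\eta,\xi}+T_{\rho(\eta),\rho(\xi)}$ and apply the relations of Definition \ref{almostunits} twice (once on each side) to get
$$T_{\alpha,\alpha}U_{\eta,\xi}T_{\beta,\beta}=^\K\big(\delta_{\alpha,\eta}\delta_{\beta,\xi}+\delta_{\alpha,\rho(\eta)}\delta_{\beta,\rho(\xi)}\big)T_{\alpha,\beta}.$$
In particular $T_{\alpha,\alpha}RT_{\beta,\beta}$ is a scalar multiple of $T_{\alpha,\beta}$ modulo $\K(\ell_2)$, so $R\in\mathcal Q\mathcal M(\A(\mathcal{T}))$, and $\lambda^{\mathcal{T}}_{\alpha,\beta}(R)$ equals the bracketed coefficient times $\lambda_{\eta,\xi}^{\mathcal{U}}(R)$. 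Splitting into the four cases then finishes the proof: for $\alpha,\beta\in Y$ (so $\eta=\alpha,\xi=\beta$) the coefficient is $1$, giving $\lambda^{\mathcal{T}}_{\eta,\xi}(R)=\lambda_{\eta,\xi}^{\mathcal{U}}(R)$; for $\alpha=\rho(\eta),\beta=\rho(\xi)$ the coefficient is again $1$, giving $\lambda^{\mathcal{T}}_{\rho(\eta),\rho(\xi)}(R)=\lambda_{\eta,\xi}^{\mathcal{U}}(R)$; and in the mixed cases $\alpha\in Y,\beta\in\rho[Y]$ and $\alpha\in\rho[Y],\beta\in Y$ both products of deltas vanish, giving $\lambda^{\mathcal{T}}_{\eta,\rho(\xi)}(R)=\lambda^{\mathcal{T}}_{\rho(\eta),\xi}(R)=0$.

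I expect no genuine difficulty beyond careful bookkeeping with the Kronecker deltas and the repeated use of ``compact times bounded is compact.'' The only conceptually delicate point is the corner selection identity of the second paragraph: it is exactly the disjointness of $Y$ from $\rho[Y]$ that ensures sandwiching $R$ between the $\mathcal{T}$-diagonals $T_{\alpha,\alpha},T_{\beta,\beta}$ agrees, modulo $\K(\ell_2)$, with sandwiching it between the corresponding $\mathcal{U}$-diagonals, which is what lets the $\mathcal{U}$-quasi-multiplier hypothesis be imported into the $\mathcal{T}$-picture.
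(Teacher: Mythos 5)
Your proof is correct and follows essentially the same route as the paper's: both rest on the corner-absorption identities $T_{\alpha,\alpha}U_{\eta(\alpha),\eta(\alpha)}=^\K T_{\alpha,\alpha}$ (valid because $Y\cap\rho[Y]=\emptyset$) and then expand $U_{\eta,\xi}$ as $T_{\eta,\xi}+T_{\rho(\eta),\rho(\xi)}$ modulo compacts. Your uniform Kronecker-delta bookkeeping merely packages into one computation what the paper does in separate cases (diagonal blocks first, then killing the mixed blocks by multiplying the four-term expansion of $U_{\eta,\eta}RU_{\xi,\xi}$ by suitable corners), and has the minor virtue of establishing $R\in\mathcal Q\mathcal M(\A(\mathcal T))$ before the coefficients $\lambda^{\mathcal T}_{\alpha,\beta}(R)$ are invoked.
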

\begin{proof}
We have $T_{\xi, \xi}U_{\xi, \xi}=^\K T_{\xi, \xi}(T_{\xi, \xi}+ T_{\rho(\xi), \rho(\xi)})=^\K T_{\xi, \xi}$,
as $Y\cap \rho[Y]=\emptyset$ and  by the almost
matrix units relations. Similarly  $U_{\xi, \xi}T_{\xi, \xi}=^\K T_{\xi, \xi}$. Then
\begin{align*}
T_{\eta, \eta}RT_{\xi, \xi}&=^\K \lambda_{\eta, \xi}^{\mathcal U}(R)T_{\eta, \xi},
\end{align*}
again since $Y\cap \rho[Y]=\emptyset$.

Similarly $U_{\xi, \xi}T_{\rho(\xi), \rho(\xi)}=^\K T_{\rho(\xi), \rho(\xi)}$
and  $T_{\rho(\xi), \rho(\xi)}U_{\xi, \xi}=^\K T_{\rho(\xi), \rho(\xi)}$ and so\break
$T_{\rho(\eta), \rho(\eta)}RT_{\rho(\xi), \rho(\xi)}
=^\K \lambda_{\eta, \xi}^{\mathcal U}(R)T_{\rho(\eta), \rho(\xi)}$.

To prove the  second part of the lemma note that
$\lambda_{\eta, \xi}^{\mathcal U}(R)(T_{\eta, \xi}+T_{\rho(\eta), \rho(\xi)})
=^\K \lambda_{\eta, \xi}^{\mathcal U}(R)U_{\eta, \xi}=^\K U_{\eta, \eta}RU_{\xi, \xi}=^\K
(T_{\eta, \eta}+T_{\rho(\eta), \rho(\eta)})R(T_{\xi, \xi}+T_{\rho(\xi), \rho(\xi)})
=^\K 
\lambda_{\eta, \xi}^{\mathcal T}(R)T_{\eta, \xi}+
\lambda_{\eta, \rho(\xi)}^{\mathcal T}(R)T_{\eta, \rho(\xi)}+
\lambda_{\rho(\eta), \xi}^{\mathcal T}(R)T_{\rho(\eta), \xi}+
\lambda_{\rho(\eta), \rho(\xi)}^{\mathcal T}(R)T_{\rho(\eta), \rho(\xi)}$.
Multiplying the above equalities by $T_{\eta, \eta}$ from the left and $T_{\rho(\xi), \rho(\xi)}$ from the right,
using \ref{almostunits} (2) and the fact that $\rho(\eta)\not=\eta\not=\rho(\xi)\not=\xi$ we obtain that 
$0=^\K \lambda_{\eta, \rho(\xi)}^{\mathcal T}(R)T_{\eta, \rho(\xi)}$. Since
$T_{\eta, \rho(\xi)}$ is noncompact, it follows that $\lambda_{\eta, \rho(\xi)}^{\mathcal T}(R)=0$.
We obtain $\lambda_{\rho(\eta), \xi}^{\mathcal T}(R)=0$ in a similar way
multiplying the above equalities by $T_{\xi, \xi}$ from the right and $T_{\rho(\eta), \rho(\eta)}$ from the left.
\end{proof}

\begin{lemma}\label{pairing-c-trivial} Suppose $X, Y$ and $\rho$ are as in Definition \ref{pairing-def}.
Let  $\mathcal T=\{T_{\eta, \xi}: \xi, \eta\in X\}$ be a
system of almost matrix units and $\mathcal U$ be a pairing
of $\mathcal T$ along $\rho$. If $R\in \B(\ell_2)$ is a $\sigma$-trivial  ($\cc$-trivial) quasi-multiplier
for $\A(\mathcal U)$, then $R$ is a  $\sigma$-trivial  ($\cc$-trivial) quasi-multiplier of $\A(\mathcal T)$. 
\end{lemma}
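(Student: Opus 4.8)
The plan is to read off the full structure of the matrix $\Lambda^{\mathcal T}(R)$ from $\Lambda^{\mathcal U}(R)$ using Lemma \ref{pairing}, and then to transfer the triviality hypothesis purely by counting nonzero entries. The entire mathematical content will sit in Lemma \ref{pairing}; what remains is bookkeeping with respect to the partition $X=Y\cup\rho[Y]$.

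First I would invoke Lemma \ref{pairing}. Since $R$ is a quasi-multiplier of $\A(\mathcal U)$, it is automatically a quasi-multiplier of $\A(\mathcal T)$, and for all $\xi,\eta\in Y$ we have
$$\lambda_{\eta, \xi}^{\mathcal T}(R)=\lambda_{\rho(\eta), \rho(\xi)}^{\mathcal T}(R)=\lambda_{\eta, \xi}^{\mathcal U}(R), \qquad \lambda_{\eta, \rho(\xi)}^{\mathcal T}(R)=\lambda_{\rho(\eta), \xi}^{\mathcal T}(R)=0.$$
Reading these identities against the partition $X=Y\cup\rho[Y]$, the matrix $\Lambda^{\mathcal T}(R)$ is block diagonal: the two off-diagonal blocks (indexed by $Y\times\rho[Y]$ and by $\rho[Y]\times Y$) vanish, the $Y\times Y$ block is exactly $\Lambda^{\mathcal U}(R)$, and the $\rho[Y]\times\rho[Y]$ block is the copy of $\Lambda^{\mathcal U}(R)$ obtained by reindexing through the bijection $\rho$.

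Next I would feed in the hypothesis. Assume $R$ is $\sigma$-trivial (respectively $\cc$-trivial) for $\A(\mathcal U)$, so that $\Lambda^{\mathcal U}(R)=\lambda I_Y+M$ for some $\lambda\in\C$, where $M$ has at most countably many (respectively fewer than continuum many) nonzero entries. Using $\delta_{\rho(\eta),\rho(\xi)}=\delta_{\eta,\xi}$ (valid because $\rho$ is a bijection with domain and range disjoint), the block description gives, for all $\alpha,\beta\in X$,
$$\lambda_{\alpha,\beta}^{\mathcal T}(R)=\lambda\,\delta_{\alpha,\beta}+M'_{\alpha,\beta},$$
where $M'_{\alpha,\beta}$ equals $M_{\eta,\xi}$ when $(\alpha,\beta)=(\eta,\xi)$ or $(\alpha,\beta)=(\rho(\eta),\rho(\xi))$ for some $\xi,\eta\in Y$, and equals $0$ otherwise. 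Thus $\Lambda^{\mathcal T}(R)=\lambda I_X+M'$, and the set of nonzero entries of $M'$ is the disjoint union of two copies of the set of nonzero entries of $M$ (one in the $Y\times Y$ block, one in the $\rho[Y]\times\rho[Y]$ block).

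Finally I would close with the cardinal arithmetic. The number of nonzero entries of $M'$ is at most twice that of $M$; since $2\kappa=\kappa$ for every infinite cardinal $\kappa$, and a union of two sets each of size less than $\cc$ again has size less than $\cc$, both the property ``at most countably many nonzero entries'' and the property ``fewer than continuum many nonzero entries'' are preserved. Hence $M'$ witnesses, via Definition \ref{trivial-def}, that $R$ is a $\sigma$-trivial (respectively $\cc$-trivial) quasi-multiplier of $\A(\mathcal T)$. I do not expect a serious obstacle: the only point requiring care is that the scalar $\lambda$ must be correctly absorbed on \emph{both} diagonal blocks, which is exactly what the relation $\lambda_{\rho(\eta),\rho(\xi)}^{\mathcal T}(R)=\lambda_{\eta,\xi}^{\mathcal U}(R)$ from Lemma \ref{pairing} guarantees, so that the remainder $M'$ genuinely inherits only the few nonzero entries coming from $M$.
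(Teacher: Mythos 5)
Your proof is correct and follows essentially the same route as the paper: both decompose $\Lambda^{\mathcal T}(R)$ into the four blocks determined by $X=Y\cup\rho[Y]$, use Lemma \ref{pairing} to see that the off-diagonal blocks vanish and the two diagonal blocks are copies of $\Lambda^{\mathcal U}(R)$, and conclude by counting nonzero entries. Your write-up is just a more explicit version of the paper's one-paragraph argument, with the useful extra observation that the scalar $\lambda$ is correctly absorbed on both diagonal blocks.
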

\begin{proof} The $X\times X$ matrix $\Lambda^{\mathcal T}(R)$ consists of
four blocks $Y\times Y$, $(X\setminus Y)\times Y$, $Y\times (X\setminus Y)$ 
and $(X\setminus Y)\times (X\setminus Y)$. Lemma \ref{pairing} implies that
the $Y\times Y$-block is the matrix $\Lambda^{\mathcal U}(R)$, that
$(X\setminus Y)\times (X\setminus Y)$-block is a copy of the $Y\times Y$-block and
the remaining  blocks have only zero entries. This clearly implies the lemma.
\end{proof}

\begin{lemma}\label{killer-pairing} Suppose that $\mathcal T=\{T_{\eta, \xi}: \xi, \eta \in X\}$ is a system
of almost matrix units where $X$ is of size  continuum. 
Then there are $Y\subseteq X$ and a bijection $\rho: Y\rightarrow (X\setminus Y)$ 
 such that for every  pairing $\mathcal U$ of $\mathcal T$ along $\rho$, 
whenever $R$ is a quasi-multiplier of $\A(\mathcal U)$, then $R$ is a $\cc$-trivial quasi-multiplier of
$\A(\mathcal T)$.
\end{lemma}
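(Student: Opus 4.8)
The plan is to build $\rho$ by transfinite recursion of length $\cc$, exploiting that $|\B(\ell_2)|\le\cc^{\aleph_0}=\cc$, so that the relations forced by Lemma \ref{pairing} make every non-$\cc$-trivial quasi-multiplier fail to be a quasi-multiplier of $\A(\mathcal U)$. First I would fix an enumeration $\{R_\alpha:\alpha<\cc\}$ of $\B(\ell_2)$ (an operator on the separable space $\ell_2$ is determined by its countably many matrix entries, whence the bound on the cardinality). I will construct an increasing chain of partial matchings of $X$, that is, sets of pairwise disjoint unordered pairs with a chosen orientation, adding at most one pair per stage, and finally let $\rho\colon Y\to X\setminus Y$ be a bijection extending the union to a perfect matching of $X$ (possible since $X$ is infinite). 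The recursion keeps the set $C_\alpha\subseteq X$ of committed indices of size $<\cc$ before stage $\alpha$. By Lemma \ref{pairing} every quasi-multiplier of $\A(\mathcal U)$ is already a quasi-multiplier of $\A(\mathcal T)$; hence it suffices to arrange, for each $\alpha$ such that $R_\alpha\in\mathcal Q\mathcal M(\A(\mathcal T))$ is \emph{not} $\cc$-trivial, a single pair in $\rho$ which, through the relations of Lemma \ref{pairing}, prevents $R_\alpha$ from being a quasi-multiplier of any pairing $\mathcal U$ along $\rho$.

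The recursion step rests on a dichotomy for a non-$\cc$-trivial $R=R_\alpha\in\mathcal Q\mathcal M(\A(\mathcal T))$. Writing $\Lambda^{\mathcal T}(R)=(\lambda_{\eta,\xi})_{\xi,\eta<\cc}$, $d(\zeta)=\lambda_{\zeta,\zeta}$, and $\Delta=\{(\eta,\xi):\eta\neq\xi,\ \lambda_{\eta,\xi}\neq0\}$, Definition \ref{trivial-def} shows that if $d$ were essentially constant (some $\lambda$ with $|\{\zeta:d(\zeta)\neq\lambda\}|<\cc$) and $|\Delta|<\cc$, then $R$ would be $\cc$-trivial. Hence either (mode 1) $|\{\zeta:d(\zeta)\neq v\}|=\cc$ for every $v\in\C$, or (mode 2) $|\Delta|=\cc$. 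In mode 1 I pick a fresh $\xi\in X\setminus C_\alpha$, set $v=d(\xi)$, and using $|\{\zeta:d(\zeta)\neq v\}|=\cc>|C_\alpha|$ choose a fresh $\zeta$ with $d(\zeta)\neq v$; committing $\{\xi,\zeta\}$ with $\xi\in Y$ and $\rho(\xi)=\zeta$ yields $d(\xi)\neq d(\rho(\xi))$, contradicting the relation $\lambda_{\xi,\xi}^{\mathcal T}(R)=\lambda_{\rho(\xi),\rho(\xi)}^{\mathcal T}(R)$ of Lemma \ref{pairing}. In mode 2, Lemma \ref{rows} gives that each row and column of $\Lambda^{\mathcal T}(R)$ has only countably many nonzero entries, so at most $|C_\alpha|\cdot\aleph_0<\cc$ members of $\Delta$ meet $C_\alpha$; I may therefore choose $(\eta,\zeta)\in\Delta$ with $\eta,\zeta\notin C_\alpha$ and commit $\eta\in Y$, $\rho(\eta)=\zeta$. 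Then $\eta\in Y$ and $\zeta\in X\setminus Y$ lie in opposite blocks, and the relation $\lambda_{\eta,\rho(\eta)}^{\mathcal T}(R)=0$ of Lemma \ref{pairing} is violated, since $\lambda_{\eta,\zeta}\neq0$.

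In both cases the committed pair, its orientation, and the membership of the relevant index in $Y$ are preserved by every later extension of the matching, so the violated relation of Lemma \ref{pairing} persists for the final $\rho$; thus $R_\alpha$ is a quasi-multiplier of no pairing $\mathcal U$ along $\rho$. Since at most one pair is added per stage, $|C_\alpha|\le|\alpha|+\aleph_0<\cc$, which justifies every freshness choice above; as $\cc>\aleph_0$ and $\cc>|C_\alpha|$, none of these counting steps requires $\cc$ to be regular. Running the recursion through all $\alpha<\cc$ and extending the resulting matching to a bijection $\rho\colon Y\to X\setminus Y$ gives the desired $\rho$: if $R$ is a quasi-multiplier of some pairing $\mathcal U$ of $\mathcal T$ along $\rho$, then $R=R_\alpha$ for some $\alpha$, and $R$ is a quasi-multiplier of $\A(\mathcal T)$ by Lemma \ref{pairing}; were it not $\cc$-trivial, the pair committed at stage $\alpha$ would contradict Lemma \ref{pairing}, so $R$ is a $\cc$-trivial quasi-multiplier of $\A(\mathcal T)$.

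The main obstacle is the recursion step: establishing the dichotomy and verifying that the two killing moves use only fresh indices, so that distinct operators never compete for the same pair. The decisive inputs are the countability of the rows and columns of $\Lambda^{\mathcal T}(R)$ (Lemma \ref{rows}), which bounds how much of $\Delta$ can already touch $C_\alpha$, and the relations of Lemma \ref{pairing}, which turn one well-placed pair into a genuine obstruction to being a quasi-multiplier of $\A(\mathcal U)$.
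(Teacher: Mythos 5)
Your proposal is correct and takes essentially the same route as the paper: an enumeration of length $\cc$ combined with a transfinite recursion that, at each stage, commits a pair of fresh indices whose images under $\rho$ violate one of the relations of Lemma \ref{pairing}, using the same dichotomy (diagonal not essentially constant versus continuum many nonzero off-diagonal entries) and the same counting input from Lemma \ref{rows}. The only substantive variation is your off-diagonal move, which places the nonzero entry at a straddling position $(\eta,\rho(\eta))$ and invokes $\lambda^{\mathcal T}_{\eta,\rho(\eta)}(R)=0$ with a single committed pair, where the paper uses two pairs and the relation $\lambda^{\mathcal T}_{\eta,\xi}(R)=\lambda^{\mathcal T}_{\rho(\eta),\rho(\xi)}(R)$ --- both work; just note that ``extend the partial matching to a perfect matching of $X$'' should be secured explicitly (e.g.\ by also committing at each stage the two least unused indices, as the paper does), since an arbitrary recursion could in principle leave a finite odd remainder.
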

\begin{proof} We may assume that $X=\cc$.
Let $(R_\xi)_{\xi<\cc}$ be an enumeration (with possible repetitions) of
all quasi-multipliers of $\A(\mathcal T)$ which are not $\cc$-trivial. 
By induction on $\alpha<\cc$ we construct distinct $\beta_\alpha^i, \gamma_\alpha^i\in \cc$
for $i=1, 2, 3$ 
such that $\{\beta_\alpha^i, \gamma_\alpha^i: i\in \{1, 2, 3\}\}$ has six distinct elements for each $\alpha<\cc$
 and such that
either
\begin{enumerate}

\item $\lambda^{\mathcal T}_{\beta_\alpha^1, \beta_\alpha^2}(R_\alpha)\not=0$
and $\lambda^{\mathcal T}_{\gamma_\alpha^1, \gamma_\alpha^2}(R_\alpha)=0$, or
\item $\lambda^{\mathcal T}_{\beta_\alpha^3, \beta^3_\alpha}(R_\alpha)\not=
\lambda^{\mathcal T}_{\gamma^3_\alpha, \gamma^3_\alpha}(R_\alpha)$, 
\end{enumerate}
and moreover $\{\beta_\alpha^i, \gamma_\alpha^i: i\in \{1, 2, 3\}, \alpha<\cc\}=\cc$.

At stage $\alpha<\cc$ consider the set 
$A_\alpha=\{\beta_\delta^i, \gamma_\delta^i: i\in \{1, 2, 3\}, \delta<\alpha\}$.
Before defining $\{\beta_\alpha^i, \gamma_\alpha^i: i\in \{1, 2, 3\}$
we will identify  the reason why a quasi-multiplier $R_\alpha$ is not
a $\cc$-trivial quasi-multiplier of $\A(\mathcal T)$. If it is because  $\Lambda^{\mathcal T}(R_\alpha)$
has continuum nonzero entries off the diagonal, then we find such an entry 
$\lambda_{\eta, \xi}^{\mathcal T}(R_\alpha)$ with distinct $\xi, \eta\not \in A_\alpha$. This can be
achieved because by Lemma \ref{rows} the cardinality of the set of all  nonzero entries
$\lambda_{\xi, \eta}$ with  $\xi, \eta\in A_\alpha$ is less than $\cc$ and we 
have assumed that $\Lambda^{\mathcal T}(R_\alpha)$
has continuum nonzero entries off the diagonal.  Now find distinct $\xi',\eta'\not \in 
A_\alpha\cup\{\xi, \eta\}$
 so that $\lambda_{\eta', \xi'}^{\mathcal T}(R_\alpha)=0$.
This can be achieved again by Lemma \ref{rows}.
Put $\beta_\alpha^1=\xi, \gamma_\alpha^1=\xi', \beta_\alpha^2=\eta, \gamma_\alpha^2=\eta'$
so  (1) holds.
Now take 
$\beta_\alpha^3, \gamma_\alpha^3$ be the first
two elements of the set 
$\cc\setminus (A_\alpha\cup\{\beta_\alpha^1, \beta_\alpha^2, \gamma_\alpha^1, \gamma_\alpha^2\})$.

Otherwise if $\Lambda^{\mathcal T}(R_\alpha)$
has less then continuum nonzero entries off the diagonal but is not a $\cc$-trivial quasi-multiplier of
$\A(\mathcal T)$. Then it must be the case that $\Lambda^{\mathcal T}(R_\alpha)$ has two different entries
$\lambda_{\xi, \xi}^{\mathcal T}(R_\alpha)\not=\lambda_{\eta, \eta}^{\mathcal T}(R_\alpha)$
on the diagonal such that $\xi, \eta\not\in A_\alpha$, since $A_\alpha$
  has cardinality less than continuum.  
So we put $\beta_\alpha^3=\xi$, $\gamma_\alpha^3=\eta$ so that (2) holds.

In this case  put 
$\beta_\alpha^1, \gamma_\alpha^1, \beta_\alpha^2, \gamma_\alpha^2$ to be the first
four elements of the set 
$\cc\setminus (A_\alpha\cup\{\beta_\alpha^3, , \gamma_\alpha^3\}))$.
The choice of $\beta_\alpha^3, \gamma_\alpha^3$ in the first case and $\beta_\alpha^1, \gamma_\alpha^1, \beta_\alpha^2, \gamma_\alpha^2$ in the second case guarantees that
$\{\beta_\alpha^i, \gamma_\alpha^i: i\in \{1, 2, 3\}, \alpha<\cc\}=\cc$, which completes the
inductive construction.

We put $Y=\{\beta_\alpha^i: i\in \{1,2, 3\}, \alpha<\cc\}$ and 
we define $\rho: Y\rightarrow (X\setminus Y)$ by $\rho(\beta_\alpha^i)=\gamma_\alpha^i$.
Let $\mathcal U=\{U_{\eta, \xi}: \xi, \eta\in Y\}$ be a pairing of $\mathcal T$ along $\rho$.

Suppose that $R$ is a quasi-multiplier of $\A(\mathcal T)$ which is not 
$\cc$-trivial, so $R=R_\alpha$ for some $\alpha<\cc$. We will show that
$R$ is not a quasi-multiplier of $\A(\mathcal U)$, which will prove the required  property 
of $\mathcal U$.

If (1) holds, then $\lambda^{\mathcal T}_{\beta^1_\alpha, \beta^2_\alpha}(R)\not=
\lambda^{\mathcal T}_{\rho(\beta^1_\alpha), \rho(\beta^2_\alpha)}(R)$ as
$\rho(\beta^i_\alpha)=\gamma^i_\alpha$ for $i=1, 2$, but this contradicts Lemma \ref{pairing}.

If (2) holds, then
$\lambda^{\mathcal T}_{\beta^3_\alpha, \beta^3_\alpha}(R)\not=
\lambda^{\mathcal T}_{\rho(\beta^3_\alpha), \rho(\beta^3_\alpha)}(R)$ as
$\rho(\beta^3_\alpha)=\gamma^3_\alpha$, but this contradicts Lemma \ref{pairing}.
This shows that $R$ is not a quasi-multiplier of $\A(\mathcal U)$ and completes the proof of the lemma.
\end{proof}

\begin{lemma}\label{upward-triviality}
Suppose that $\mathcal T=\{T_{\eta, \xi}: \xi, \eta \in X\}$ is a system
of almost matrix units and $\rho: Y\rightarrow (X\setminus Y)$ is a bijection
where $Y\subseteq X$ and that  $\mathcal U$ is a pairing
of $\mathcal T$ along $\rho$. If 
$R\in \B(\ell_2)$ is a quasi-multiplier of $\A(\mathcal U)$ 
which  is a $\sigma$-trivial quasi-multiplier of $\A(\mathcal T_1)$, where $\mathcal T_1=\{T_{\eta, \xi}: \xi, \eta \in Y'\}$,
for any $Y'$ satisfying $Y\subseteq Y'\subseteq X$,
then  $R$ is a $\sigma$-trivial quasi-multiplier of $\A(\mathcal U)$.
 \end{lemma}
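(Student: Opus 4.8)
The plan is to exhibit $\Lambda^{\mathcal{U}}(R)$ as the $Y\times Y$-block of $\Lambda^{\mathcal{T}_1}(R)$ and then to observe that restricting a $\sigma$-trivial matrix to a principal sub-block preserves $\sigma$-triviality. The engine of the argument is Lemma \ref{pairing}, whose hypothesis is met because $R$ is assumed to be a quasi-multiplier of $\A(\mathcal{U})$, so in particular $\Lambda^{\mathcal{T}}(R)$ is defined and the relevant scalars exist.

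First I would reconcile the three labelings on their common index set $Y$. Since $Y\subseteq Y'\subseteq X$, the operators $T_{\eta,\xi}$ occurring in $\mathcal{T}_1$ are literally the same operators as in $\mathcal{T}$, so the defining relation $T_{\eta,\eta}RT_{\xi,\xi}=^\K \lambda_{\eta,\xi}T_{\eta,\xi}$ produces the same scalar whether the pair $(\eta,\xi)$ is read inside $\mathcal{T}_1$ or inside $\mathcal{T}$; thus $\lambda_{\eta,\xi}^{\mathcal{T}_1}(R)=\lambda_{\eta,\xi}^{\mathcal{T}}(R)$ for all $\xi,\eta\in Y'$. On the other hand Lemma \ref{pairing} gives $\lambda_{\eta,\xi}^{\mathcal{U}}(R)=\lambda_{\eta,\xi}^{\mathcal{T}}(R)$ for all $\xi,\eta\in Y$. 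Comparing these two identities over $Y$ yields $\lambda_{\eta,\xi}^{\mathcal{U}}(R)=\lambda_{\eta,\xi}^{\mathcal{T}_1}(R)$ for every $\xi,\eta\in Y$, which is precisely the statement that $\Lambda^{\mathcal{U}}(R)$ is the $Y\times Y$-block of $\Lambda^{\mathcal{T}_1}(R)$.

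Next I would feed in the hypothesis that $R$ is a $\sigma$-trivial quasi-multiplier of $\A(\mathcal{T}_1)$: by Definition \ref{trivial-def} we have $\Lambda^{\mathcal{T}_1}(R)=\lambda I_{Y'}+M_1$ for some $\lambda\in\C$ and a $Y'\times Y'$-matrix $M_1$ with at most countably many nonzero entries. Restricting to the $Y\times Y$-block and using $Y\subseteq Y'$, the diagonal $I_{Y'}$ restricts to $I_Y$ while $M_1$ restricts to a matrix still carrying at most countably many nonzero entries, so $\Lambda^{\mathcal{U}}(R)=\lambda I_Y+M_1|_{Y\times Y}$. Again by Definition \ref{trivial-def} this says exactly that $R$ is a $\sigma$-trivial quasi-multiplier of $\A(\mathcal{U})$, as required.

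The proof is essentially bookkeeping once Lemma \ref{pairing} is available, and I do not expect a genuine obstacle. The only point deserving explicit care is that the two block identifications are compatible, i.e. that the common passage through $\Lambda^{\mathcal{T}}(R)$ really forces the $Y\times Y$-entries of $\Lambda^{\mathcal{U}}(R)$ and of $\Lambda^{\mathcal{T}_1}(R)$ to coincide; this is immediate here because all three systems are built from the very same operators $T_{\eta,\xi}$ and the pairing leaves the $Y\times Y$-block untouched.
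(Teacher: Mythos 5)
Your proof is correct and follows essentially the same route as the paper's: both use Lemma \ref{pairing} to identify the entries of $\Lambda^{\mathcal U}(R)$ with entries of $\Lambda^{\mathcal T_1}(R)$ over $Y\times Y$, and then observe that a matrix of the form $\lambda I+M$ with $M$ countably supported restricts to one of the same form. The only cosmetic difference is that the paper subtracts $\lambda 1_{\B(\ell_2)}$ first and applies Lemma \ref{pairing} to $R-\lambda 1_{\B(\ell_2)}$, whereas you split off the $\lambda I$ term at the end.
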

\begin{proof} 
Let $\lambda\in \C$ be such that $\Lambda^{\mathcal T_1}(R-\lambda 1_{\B(\ell_2)})$ 
is a matrix with countably many nonzero entries.
By Lemma \ref{pairing}  there are only 
countably many nonzero entries of $\Lambda^{\mathcal U}(R-\lambda 1_{\B(\ell_2)})$,
because they are all equal to some entries  of
$\Lambda^{\mathcal T_1}(R-\lambda 1_{\B(\ell_2)})$, so
$R$  is a $\sigma$-trivial quasi-multiplier of $\A(\mathcal U)$.

\end{proof}

\section{The final construction}

The construction of the $C^*$-algebra indicated in the title of this paper and described in the
introduction starts with the Cantor tree system of almost matrix units $\mathcal T_{2^\N}$ and follows
the scheme:

$$\mathcal T_{2^\N}\xrightarrow{\rm \ extending\ }
\{T_{\xi, \eta}: \xi, \eta\in 2^\N\cup X\}\xrightarrow{\rm \ pairing\  with\ Y\subseteq  2^\N\ }\mathcal U
\xrightarrow{\rm \ pairing, \ Lemma\ 5.5.\ }\mathcal S$$
\vskip 1cm
\begin{theorem}\label{main} There is a type I $C^*$-subalgebra $\mathcal A$ of $\B(\ell_2)$
containing the ideal of compact operators $\K(\ell_2)$ such that
$\A/\K(\ell_2)$ is $*$-isomorphic to the algebra $\K(\ell_2(\cc))$ of all compact operators
 on the Hilbert space of density continuum and the algebra $\mathcal M(\A)$
of multipliers of $\A$ is equal to the unitization $\tilde{\A}$ of $\A$.
\end{theorem}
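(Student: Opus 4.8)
The plan is to set $\A=\A(\mathcal S)$ for the system of almost matrix units $\mathcal S$ produced by the displayed scheme, and to check the asserted properties one at a time. Since $\mathcal S$ will be a system of almost matrix units of size $\cc$ and $\A(\mathcal S)$ contains $\K(\ell_2)$ by definition, the short exact sequence and the isomorphism $\A/\K(\ell_2)\cong\K(\ell_2(\cc))$ are immediate from Lemma \ref{short-exact-sequence}, and the type I (GCR) assertion follows from the scattered/composition-series discussion of Section 2.4. The whole content therefore lies in $\mathcal M(\A)=\tilde\A$. Realizing $\mathcal M(\A)$ as the idealizer of $\A$ in $\B(\ell_2)$, and using $\A\subseteq\mathcal M(\A)\subseteq\mathcal Q\mathcal M(\A)$ together with $1_{\B(\ell_2)}\in\mathcal M(\A)$, we get $\tilde\A\subseteq\mathcal M(\A)\subseteq\mathcal Q\mathcal M(\A)$; hence it suffices to prove the single reverse inclusion $\mathcal Q\mathcal M(\A(\mathcal S))\subseteq\widetilde{\A(\mathcal S)}$, which will then force $\mathcal Q\mathcal M(\A)=\mathcal M(\A)=\tilde\A$. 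By Lemma \ref{good-matrices}(3), valid because $\mathcal S$ is maximal, this reduces to showing that \emph{every quasi-multiplier of $\A(\mathcal S)$ is a trivial quasi-multiplier}.

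I would obtain this by a descent through the scheme. Write $Z=2^\N\cup X$ for the index set of the maximal extension $\mathcal T=\{T_{\eta,\xi}:\xi,\eta\in Z\}$ of the Cantor tree system (Lemma \ref{extension-to-maximal}), noting $|Z|=\cc$ and that the $2^\N$-block of $\mathcal T$ is exactly $\mathcal T_{2^\N}$; here $Y\subseteq 2^\N$ with $|Y|=|Z\setminus Y|=\cc$ and $\mathcal U$ is a pairing of $\mathcal T$ along a bijection $\rho\colon Y\to Z\setminus Y$, while $\mathcal S$ is the killing pairing of $\mathcal U$ from Lemma \ref{killer-pairing}. Fix a quasi-multiplier $R$ of $\A(\mathcal S)$. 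Since $\mathcal S$ is the killing pairing of $\mathcal U$, Lemma \ref{killer-pairing} gives that $R$ is a $\cc$-trivial quasi-multiplier of $\A(\mathcal U)$; since $\mathcal U$ is a pairing of $\mathcal T$, Lemma \ref{pairing-c-trivial} upgrades this to $R$ being a $\cc$-trivial quasi-multiplier of $\A(\mathcal T)$. Restricting $\Lambda^{\mathcal T}(R)$ to the $2^\N$-block (which, via Lemma \ref{multipliers-for-T}, is $\Lambda^{\mathcal T_{2^\N}}(R)$) shows $R$ is $\cc$-trivial for $\A(\mathcal T_{2^\N})$, and now the Borel structure enters: Corollary \ref{c-trivial-sigma-trivial}, through the perfect set property, makes $R$ a $\sigma$-trivial quasi-multiplier of $\A(\mathcal T_{2^\N})$.

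The remaining task is to transport this $\sigma$-triviality back up to $\mathcal S$. Because $Y\subseteq 2^\N\subseteq Z$ and $R$ is $\sigma$-trivial for the restriction $\mathcal T_{2^\N}$ of $\mathcal T$ to $2^\N$, Lemma \ref{upward-triviality} yields that $R$ is a $\sigma$-trivial quasi-multiplier of $\A(\mathcal U)$; as $\mathcal U$ is maximal by Proposition \ref{pairing-existence}, Corollary \ref{sigma-trivial-trivial} promotes this to: $R$ is a \emph{trivial} quasi-multiplier of $\A(\mathcal U)$, i.e.\ $\Lambda^{\mathcal U}(R)=\lambda I+M$ with $M$ a matrix of a compact operator. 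Finally, applying Lemma \ref{pairing} to the pairing that produces $\mathcal S$ from $\mathcal U$ identifies $\Lambda^{\mathcal S}(R)$ with the principal sub-block of $\Lambda^{\mathcal U}(R)$ on the index set of $\mathcal S$, so $\Lambda^{\mathcal S}(R)=\lambda I+PMP$ with $PMP$ again compact. Thus $R$ is trivial for $\A(\mathcal S)$, and Lemma \ref{good-matrices}(3) (using maximality of $\mathcal S$, again from Proposition \ref{pairing-existence}) gives $R\in\widetilde{\A(\mathcal S)}$, completing the inclusion and hence $\mathcal M(\A)=\tilde\A$.

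The main obstacle is structural rather than computational: the argument needs two features of the index system simultaneously, and they resist coexisting. Maximality is indispensable for the step $\sigma$-trivial $\Rightarrow$ trivial (through Lemma \ref{maximal-noncompact}), while the Borel/perfect-set structure of the indices is indispensable for the step $\cc$-trivial $\Rightarrow$ $\sigma$-trivial (Corollary \ref{c-trivial-sigma-trivial}). The Cantor tree system supplies the latter but, by Wofsey's observation, cannot be assumed maximal. The purpose of the two-stage pairing is precisely to reconcile these demands: one first extends to a maximal system and pairs along $\rho$ with $Y\subseteq 2^\N$, so that the index set of $\mathcal U$ stays inside the Cantor tree and Lemma \ref{upward-triviality} can feed $\sigma$-triviality back from $\mathcal T_{2^\N}$, and then one applies the killing pairing; maximality is preserved at each pairing by Proposition \ref{pairing-existence}. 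Making the cardinality bookkeeping and the crucial inclusion $Y\subseteq 2^\N\subseteq Z$ match the hypotheses of Lemmas \ref{killer-pairing} and \ref{upward-triviality} and of Corollary \ref{c-trivial-sigma-trivial} is the delicate point one must verify carefully.
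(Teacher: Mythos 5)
Your proposal is correct and follows essentially the same route as the paper: the same extend--pair--pair scheme, the same descent to $\cc$-triviality over $\mathcal T_{2^\N}$ via Lemmas \ref{killer-pairing} and \ref{pairing-c-trivial}, the perfect set property via Corollary \ref{c-trivial-sigma-trivial}, the ascent via Lemma \ref{upward-triviality}, and the conclusion via Lemma \ref{good-matrices}(3). The only (harmless) deviation is at the final step: the paper applies Lemma \ref{upward-triviality} a second time to carry $\sigma$-triviality down to $\A(\mathcal S)$ and only then invokes Corollary \ref{sigma-trivial-trivial} there, whereas you invoke that corollary already at the level of $\A(\mathcal U)$ and transfer full triviality to $\mathcal S$ by compressing the compact matrix to the principal sub-block --- both versions work.
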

\begin{proof} We work with $\ell_2(2^{<\N})$ instead of $\ell_2$, as $2^{<\N}$ is countable.
Start with the Cantor tree system of  almost matrix units $\mathcal T_{2^\N}$ of Section 4.
Extend it to a maximal system of almost matrix units $\{T_{\xi, \eta}: \xi, \eta\in 2^\N\cup X\}$
for some set $X$, by Lemma \ref{extension-to-maximal}.  It is clear that $X$ has cardinality
not bigger than continuum. Let $Y\subseteq  2^\N$ be such that both $Y$ and $2^\N\setminus Y$
have cardinality $\cc$. Fix a bijection $\rho: Y\rightarrow (2^\N\setminus Y)\cup X$.
Now let $\mathcal U$ be a  pairing of  $\{T_{\xi, \eta}: \xi, \eta\in 2^\N\cup X\}$
along $\rho$. 
Finally apply the pairing from Lemma \ref{killer-pairing} 
(for $\mathcal U$ instead of $\mathcal T$), to obtain  a system $\mathcal S$ of almost
matrix units with the special properties mentioned in the Lemma \ref{killer-pairing}.
We claim that $\mathcal A(\mathcal S)$ is the desired $C^*$-algebra.

So suppose that $R$ is in the multiplier algebra $\mathcal M(\mathcal A(\mathcal S))$ 
of $\mathcal A(\mathcal S)$. Then
$R$ is a quasi-multiplier of $\A(\mathcal S)$. Lemma \ref{killer-pairing} implies that 
$R$ is a $\cc$-trivial quasi-multiplier of $\A(\mathcal U)$ and so Lemma \ref{pairing-c-trivial}
implies that $R$ is a $\cc$-trivial quasi-multiplier of $\A(\{T_{\xi, \eta}: \xi, \eta\in 2^\N\cup X\})$
and hence for $\A(\mathcal T_{2^\N})$. This however
implies that $R$ is a $\sigma$-trivial quasi-multiplier of $\A(\mathcal T_{2^\N})$ by Corollary
\ref{c-trivial-sigma-trivial}.  By Lemma \ref{upward-triviality} (for $Y'=2^\N$) 
the operator $R$ is  a $\sigma$-trivial quasi-multiplier of $\A(\mathcal U)$ and again  by Lemma \ref{upward-triviality} 
(for $\mathcal U$ as $\mathcal T=\mathcal T_1$ and $Y' = Y$) 
it is $\sigma$-trivial for $\A(\mathcal S)$. However $\{T_{\xi, \eta}: \xi, \eta\in 2^\N\cup X\}$
was a maximal system of almost matrix units, so by the last part of
Lemma \ref{pairing-existence}, the system $\mathcal U$ and hence $\mathcal S$ are maximal
systems of almost matrix units. The  maximality of $\mathcal S$ together with the fact
that $R$ is a $\sigma$-trivial quasi-multiplier of $\A(\mathcal S)$ implies that 
$R$ is a trivial quasi-multiplier of $\A(\mathcal S)$ (Corollary \ref{sigma-trivial-trivial}).
Trivial quasi-multipliers of maximal systems 
of almost matrix units belong to the unitizations of the algebra generated by them and the
compact operators, by  Lemma \ref{good-matrices} (3). Therefore $R$ belongs
to the unitization of $\mathcal A(\mathcal S)$, as required.
 
\end{proof}

\bibliographystyle{amsplain}

\end{document}